\DeclareTextSymbol{\degre}{T1}{6}
\definecolor{Change}{RGB}{30, 59, 105}
\definecolor{Change2}{RGB}{17, 82, 28}
\newcommand{\IN}{\mathbb{N}}
\newcommand{\IZ}{\mathbb{Z}}
\newcommand{\IR}{\mathbb{R}}
\newcommand{\IT}{\mathbb{T}}
\newcommand{\IE}{\mathbb{E}}
\newcommand{\dst}{\displaystyle}
\newcommand{\dd}{\mathrm{d}}
\newcommand{\Div}{\mathrm{div}}
\newcommand{\po}{\left(}
\newcommand{\pf}{\right)}
\newcommand{\nv}[1]{{\color{black}#1}}
\newtheorem{theo}{Theorem}
\newtheorem{prop}{Proposition}
\newtheorem{cor}{Corollary}
\newtheorem{lem}{Lemma}
\newtheorem{rem}{Remark}
\newtheorem{assu}{Assumption}
\title{The Adaptive Biasing Force algorithm with non-conservative forces and related topics}
\author[1]{\small Tony Lelièvre}
\author[2, 3]{\small Lise Maurin}
\author[2, 3]{\small Pierre Monmarché}
\affil[1]{\footnotesize Université Paris-Est, CERMICS (ENPC), Inria, 77455 Marne-la-Vallée, France; \texttt{tony.lelievre@enpc.fr}}
\affil[2]{\footnotesize Sorbonne Université, LJLL, 4 place Jussieu, 75005 Paris, France; \texttt{lise.maurin,pierre.monmarche@upmc.fr}}
\affil[3]{\footnotesize Sorbonne Université, LCT, 4 place Jussieu, 75005 Paris, France}
\begin{document}
\maketitle

\abstract{We propose a study of the Adaptive Biasing Force method's robustness under generic (possibly non-conservative) forces. We first ensure the flat histogram property is satisfied in all cases. We then introduce a fixed point problem yielding the existence of a stationary state for both the Adaptive Biasing Force and Projected Adapted Biasing Force algorithms, relying on generic bounds on the invariant probability measures of homogeneous diffusions. Using classical entropy techniques, we prove the exponential convergence of both biasing force and law as time goes to infinity, for both the Adaptive Biasing Force and the Projected Adaptive Biasing Force methods.\\ }

\section{Introduction}
\label{sec:introduction}
After presenting in Sections \ref{subsec:setting}, \ref{subsec:metastability} and \ref{subsec:ABF} the motivation and well-known results on the Adaptive Biasing Force (ABF) method applied to the overdamped Langevin dynamics with conservative forces, we present in Section \ref{subsec:non-conservative-case} the dynamics we are interested in, namely the ABF method applied to the overdamped Langevin dynamics with non-conservative forces.
\subsection{Setting}
\label{subsec:setting}

Let us work within the so-called \textit{canonical ensemble} (or \textit{NVT ensemble}), where a system of $N$ particles is contained in a fixed volume $\mathcal{V}$, and is in contact with a thermostat of constant temperature $T$. Denote by $q=(q_{1},\ldots,q_{N}) \in \mathcal{D}$ the positions, $p=(p_{1}, \ldots, p_{N}) \in \IR^{dN}$ the momenta, and $(m_{1}, \ldots, m_{N}) \in \IR^{N}$ the masses of the particles, where $\mathcal{D}$ is the configuration space and $d \in \{1,2,3\}$ is the space dimension. 
Usually, $\mathcal{D}$ is an open subset of $\IR^{dN}$ (or $\mathbb{T}^{dN}$, where  the $dN$-dimensional torus is viewed as the cube $[0,1]^{dN}$ with opposite sides identified, in other words, $\mathbb{T}^{dN}=\IR^{dN}/\IZ^{dN}$). Interactions between particles are taken into account via a potential function $V : \mathcal{D} \rightarrow \IR$, so that the system's total energy is given by the following Hamiltonian:

\begin{equation*}
H(q,p)=V(q)+ \frac{1}{2}p^{\top}M^{-1}p, 
\end{equation*}
with $M=\mathrm{diag}(m_{1}I_{d}, \ldots, m_{N}I_{d})$ being the mass matrix. Since this Hamiltonian is \textit{separable}, the positions and the momenta are independent variables in the canonical ensemble, namely under the probability distribution $Z^{-1}e^{-\beta H(q,p)} \, \dd q \dd p$ where $\beta = 1/(k_{B}T)$, with $k_B$ being the Boltzmann constant, and $Z=\int_{\mathcal{D} \times \IR^{dN}}e^{-\beta H(q,p)} \, \dd q \dd p  $ is the normalization constant, or \textit{partition function}. The momenta $p$ being distributed according to a Gaussian measure, the main issue resides in sampling the positions $q$, which are distributed according to the \textit{Boltzmann-Gibbs measure}:
 \begin{equation*}
 \mu(\dd q)= Z_{\mu}^{-1}e^{-\beta V(q)} \dd q, \qquad Z_{\mu}=\dst{\int_{\mathcal{D}} e^{-\beta V(q)} \, \dd q } .
\end{equation*}  
Thermodynamic properties are obtained by averaging functions of the microstate $q$ which are called \textit{observables}. Given an observable $\psi$, one would like to compute the following thermodynamic quantity:
\begin{equation*}
 \mathbb{E}_{\mu}[\psi] = \displaystyle{\int_{\mathcal{D}} \psi \, \dd \mu }. 
 \end{equation*}
One of the simplest dynamics to sample the Boltzmann-Gibbs measure is the \textit{overdamped Langevin dynamics}:
 \begin{equation}
 \dd X_{t}=-\nabla V(X_{t}) \, \dd t +\sqrt{2\beta^{-1}} \dd W_{t}, \label{GeneralOverdampedLangevin}
 \end{equation}
where $(W_{t})_{t\geq 0}$ is a $dN$-dimensional standard Brownian motion, and $-\nabla V: \mathcal{D} \rightarrow \IR^{dN}$ is the \textit{interaction force}. Notice that here, the interaction force is \textit{conservative}, namely it is the gradient of a function (here, minus the gradient of the potential energy $V$). Under reasonable assumptions on the potential $V$ (see \cite{LRS07} for more details), the process $(X_{t})_{t\geq 0}$ is ergodic with respect to $\mu$. In other words, for any observable $\psi \in \mathcal{C}^{\infty}_{0}(\mathcal{D})$, the average over a trajectory of the process converges to the canonical average:
 \begin{equation}\label{eq:ergodicity}
 \lim\limits_{\tau \rightarrow +\infty} \frac{1}{\tau}\displaystyle{\int_{0}^{\tau} \psi(X_{t}) \, \dd t } =\mathbb{E}_{\mu}[\psi]. 
 \end{equation}

\subsection{Metastability, reaction coordinate and free-energy profiles}
\label{subsec:metastability}

Computing thermodynamic averages can be troublesome, as microscopic and macroscopic timescales can violently differ. Typical microscopic phenomena occur on timescales of the order of $10^{-15}$s, while macroscopic ones can take up to $1$ h \cite{ActaNumerica}. Furthermore, $N$ needs to be sufficiently large so that the targeted macroscopic phenomena can emerge from the collective, microscopic behaviour of the system.

Such timescales differences are linked to the system's \textit{metastability}: low-energy regions of the configuration space are separated by either high-energy or high-entropy barriers. These regions are called metastable: the process \eqref{GeneralOverdampedLangevin} remains trapped in a metastable region and occasionally jumps to another one after a long period of time. 
From a probabilistic point of view, metastability is linked to the \textit{multimodality} of the measure $\mu$: likely regions are separated by low probability regions. The exploration of the state space by the process and the convergence of the trajectorial averages \eqref{eq:ergodicity} can thus take a considerably long time.

One way of avoiding metastability is to capture some slow components of the dynamics $(X_{t})_{t \geq 0}$. To do so, we consider \textit{transition coordinates} (also called reaction coordinates or collective variables), namely mappings $\xi : \mathcal{D} \rightarrow \mathcal{M}$, 
where $\mathcal M$ is a manifold of dimension $m \ll dN$. Transition coordinates are designed to provide a \textit{coarse-grained information} on the system's state (for example, the dihedral angle of a molecule, in which case $\mathcal{M}=\mathbb{T}$, or the signed distance to a hypersurface of $\mathcal{D}$, in which case $\mathcal{M}= \IR$).  
In other words, $\xi(x)\in\mathcal M$ is the macroscopic state of a microscopic state $x\in\mathcal D$. Designing a good reaction coordinate is a difficult problem, that will not be discussed further in the present work (see \cite{RevueMLMD} for a recent review on the question of automatic learning of transition coordinates).

Decomposing
 $$\mathcal{D}=\bigsqcup\limits_{z\in \mathcal{M}} \Sigma_{z} =\bigsqcup\limits_{z\in \mathcal{M}} \{q\in \mathcal{D} | \xi(q)=z\},$$
and denoting by $\sigma_{\Sigma_{z}}$ the measure on $\Sigma_{z}$ induced by the Lebesgue measure on $\mathcal{D}$, one can define the measure $ \delta_{\xi(q)-z}(\dd q )$ by
$$\delta_{\xi(q)-z}(\dd q ) = \frac{1}{\sqrt{\det G(q)}}  \sigma_{\Sigma_{z}}(\dd q),$$
where $G=\left(\nabla \xi \right)^{\top} \nabla \xi$, in other words,
$$G_{i,j}=\nabla \xi_{i} \cdot \nabla \xi_{j}, \quad \text {for all } (i,j) \in \llbracket 1,m \rrbracket^{2}.$$
The free energy associated to $\xi$ is then expressed as follows: for every $z\in\mathcal M$,
\begin{equation}
A(z)=\displaystyle{-\frac{1}{\beta} \ln (Z_{\Sigma_{z}}),\quad Z_{\Sigma_{z}}=\int_{\Sigma_{z}} e^{-\beta V(q)} \delta_{\xi(q)-z}(\dd q )}, \label{eq:free-energy}
\end{equation}
assuming $V$ and $\xi$ are such that $Z_{\Sigma_{z}}<+\infty$. As can be seen using the co-area formula \cite{LRS07}, this definition ensures that the image of $\mu$ by $\xi$ is given by
\begin{equation}
\xi * \mu \,(\dd z)=\frac{e^{-\beta A(z) }\dd z}{\displaystyle{\int_{\mathcal{M}} e^{-\beta A(u)} \dd u}}.\label{BGXi}
\end{equation}

\subsection{The Adaptive Biasing Force method}
\label{subsec:ABF}

Introducing a reaction coordinate allows us to construct a  less metastable dynamics, the idea being to substitute the potential $V$  in \eqref{GeneralOverdampedLangevin} for a \textit{biased potential} $V-A\circ\xi$. The new equilibrium measure is then
\begin{equation}\label{eq:mu-FE}
\mu_{A}(\dd q )=Z_{\mu_{A}}^{-1} e^{-\beta (V-A\circ \xi)(q)} \, \dd q,
\end{equation}
 where $Z_{\mu_{A}}= \int_{\mathcal{D}} e^{-\beta (V-A\circ \xi)(q)} \dd q$. Given the expression \eqref{BGXi}, the image of $\mu_{A}$ by $\xi$  is the uniform measure: $\xi * \mu_{A}=\lambda(\mathcal{M})^{-1}\mathds{1}_{\mathcal{M}}$, with $\lambda(\mathcal{M})$ being the Lebesgue measure of $\mathcal{M}$ (which is here assumed to be compact). Since, contrary to the initial probability measure $\xi * \mu$, the uniform measure is no longer multimodal, we expect a faster sampling of the phase space, provided $\xi$ is well chosen so that $\mu_{A}$ is less multimodal than $\mu$.

Although this change of potential can  accelerate the phase space sampling, the free-energy $A$ is \textit{a priori} unknown. The main idea to get round this issue will be to approximate on the fly $A$ , or $\nabla A$, its derivative with respect to the reaction coordinate. To do so, we will consider the \textit{Adaptive Biasing Force (ABF) algorithm} \cite{Darve-Pohorille,Henin-Chipot}:
\begin{equation}
\left\{\begin{array}{rcl}
 \dd X_{t}& =&  \left(-\nabla V(X_t) +B_{t}\left(\xi(X_t)\right) \nabla \xi(X_t)  \right)  \, \dd t +\sqrt{2\beta^{-1}} \dd W_{t} \\
 B_{t}(z)&=&\mathbb{E}[F(X_{t}) \,| \,\xi(X_{t})=z]  \qquad \forall z \in \mathcal{M},
 \end{array}\right.  \label{GeneralABF}
 \end{equation}
where $-\nabla V$ is the conservative interaction force, and $F$ is the so-called \textit{local mean force}, which is the vector with components $(F_{i})_{i \in \llbracket 1,m \rrbracket}$ given by:
\begin{equation*}
F_{i}= \sum_{j=1}^{m} G_{i,j}^{-1} \nabla \xi_{j} \cdot \nabla V -\beta^{-1} \Div \left(\sum_{j=1}^{m} G_{i,j}^{-1} \nabla \xi_{j}\right),
\end{equation*}
where $G_{i,j}^{-1}$ denotes the $(i,j)$-component of the inverse of the matrix $G$ defined above.
 This process is motivated by the fact that the aforementioned free energy satisfies:
 \[\nabla A(z) = \mathbb{E}[F(X) | \xi(X)=z], \quad \forall z \in \mathcal{M}\qquad \text{ if }X\sim \mu_A\,,\]
 so that $\mu_A$ is a fixed point of the Fokker-Planck equation associated to the process. In other words, if $X_0\sim\mu_A$, then $X_t\sim \mu_A$ for all $t\geqslant 0$ and $(X_t)_{t\geqslant 0}$ is exactly the diffusion \eqref{GeneralOverdampedLangevin} with the biased potential $V-A\circ\xi$.

Starting from another initial distribution, using entropy estimates and functionnal inequalities, it has been proven in \cite{LRS07}, under mild assumptions, that this fixed point is in fact an attractor of the dynamics, in the sense that $B_t$ converges  to $\nabla A$ in the long-time limit, and the law of $X_t$ converges to $\mu_A$.\\
 
\begin{rem}\

 \begin{enumerate}
\item[]$\triangleright$ In some cases $\mathcal{M}$ is not bounded, for example when $\xi$ is a distance. If so, an additional confining potential $W\circ \xi$ is  needed in the drift \cite{LRS07}.
\item[]$\triangleright$ As discussed in \cite{LRS07}, the algorithm \eqref{GeneralABF} can be modified in order to obtain a diffusive behaviour for the law of $\xi(X_{t})$. Additional terms depending on $\xi$ are added to obtain the following variant:
\begin{equation*}
\left\{\begin{array}{rcl}
 \dd X_{t} & = & \left(-\nabla V +B_{t}\circ \xi -\nabla W\circ \xi + \beta^{-1} \nabla\ln (|\nabla \xi |^{-2})\right)|\nabla \xi |^{-2} (X_{t})\,  \dd t +\sqrt{2\beta^{-1}} |\nabla \xi |^{-1}(X_{t}) \dd W_{t} \\
B_{t}(z) &= &\mathbb{E}[F(X_{t}) \,| \,\xi(X_{t})=z],  \quad \forall z \in \mathcal{M}.
 \end{array}\right. 
 \end{equation*}

In this case the longtime convergence of $B_{t}$ towards $\nabla A$ is stronger than in the case of \eqref{GeneralABF}, in that it requires less hypothesis. 
\end{enumerate}
\end{rem}

We might also consider a variant of the ABF method, namely the \textit{Projected Adaptive Biasing Force} (PABF) algorithm, introduced in \cite{Alrachid}:
$$\left\{\begin{array}{rcl}
 \dd X_{t}&=& \left(-\nabla V(X_t) +B_{t}\left(\xi(X_t)\right) \nabla \xi(X_t)  \right)  \, \dd t +\sqrt{2\beta^{-1}} \dd W_{t} \\
 B_{t} &=&\mathsf{P}_{L^{2}(\lambda)}\left(G_t\right)\\
 G_t(z) & = & \mathbb{E}[F(X_{t}) \,| \,\xi(X_{t})=z]  \qquad \forall z \in \mathcal{M},
 \end{array}\right.  $$
where $\mathsf{P}_{L^{2}(\lambda)}(f)$ stands for the Helmholtz projection with respect to the Lebesgue measure $\lambda$ of a vector field $f$ on an open bounded set $\mathcal{M}\subset \IR^{dN}$ with Lipschitz boundary $\partial\mathcal{M}$ \cite{Ambrosio}. In other words, it is the gradient of the minimizer on $\{g\in H^1(\mathcal{M}),\   \int_{\mathcal{M}} g \dd x  = 0 \}$ of
 $$g\mapsto \int_{\mathcal{M}} |f(x)-\nabla g(x)|^2 \dd x\,.$$
 More generally, if $\nu$ is a continuous positive measure on $\mathcal{M}$, the Helmholtz projection with respect to $\nu$ is the minimizer on $\{g\in H^1(\mathcal{M}),\   \int_{\mathcal{M}} g \dd x  = 0 \}$ of $g\mapsto \int_{\mathcal{M}} |f(x)-\nabla g(x)|^2 \nu(\dd x)$.
 
\subsection{The non-conservative case}
\label{subsec:non-conservative-case}

From now on, we only consider periodic boundary conditions and reaction coordinates that are Euclidean coordinates of the system, namely $\mathcal D = \mathbb T^n=\IR^{n}/\IZ^{n}$ for some $n\in \mathbb N^{*}$, $\mathcal M = \mathbb T^m$ for $m \in \IN^*$ such that $m\leqslant n$ and $\xi(x,y) = x$, where we decompose $(x,y)\in \mathcal D$ with $x\in\mathbb T^m$ and $y\in\mathbb T^{n-m}$. This latter restriction may seem  quite narrow: nevertheless, it is the generic case used for alchemical reactions \cite{KongXianjun}. Besides, more general reaction coordinates can be reduced to this setting by adding extended variables \cite{ChipotEABF}. Here, such restriction is made only for the sake of clarity: most arguments could be extended (at the price of heavier computations) to the general case $\xi(x,y) \in \mathcal{M}$.

We are interested in the case where the force in \eqref{GeneralOverdampedLangevin} is not necessarily conservative, namely is not the gradient of some potential energy $V$. There are several motivations for this approach, one of them being that the numerical computation of conservative forces $-\nabla V$ sometimes relies on approximations which make the force \textit{a priori} not conservative, in particular in the context of \textit{ab initio} molecular dynamics, see e.g. \cite{PulayFogarasi2004,Niklasson2006,Tkatchenko2017}. In this case, one is interested in knowing if, by controlling the error made on the force $-\nabla V$, one can deduce an estimation of the error made on the system's free energy. The robustness of a diffusion's invariant measure with respect to the perturbation of its drift is a classical problem (see e.g. Section~\ref{subsec:perturb-diffusion}), but note that in the ABF case, the adaptive procedure makes the question more subtle. Moreover, the convergence of the ABF method in such a context cannot be deduced from the aforementionned convergence analysis. We consequently consider the ABF algorithm in the case where $-\nabla V$ is replaced by a general force field $\mathcal F\in\mathcal C^1( \mathcal D ,\mathbb R^n)$ that we rewrite as~$\mathcal F(x,y) = (\mathcal F_1(x,y),\mathcal F_2(x,y))\in\mathbb R^m \times \mathbb R^{n-m}$. The local mean force is simply $F=-\mathcal F_1$, and the corresponding process is thus, for all $t \ge 0$:
\begin{equation}
\left\{\begin{array}{lclr}
 \dd X_{t} & = & \mathcal F_1(X_t,Y_t) \dd t + B_t(X_t) \dd t + \sqrt{2\beta^{-1}} \dd W_{t}^{1} &\\
  \dd Y_{t} & = & \mathcal F_2(X_t,Y_t) \dd t + \sqrt{2 \beta^{-1}} \dd W_{t}^{2} &
   \end{array}\right.  \label{F-ABF}
 \end{equation}
 where $W=(W^1,W^2)$ is a standard Brownian motion on $\IT^{m}\times \IT^{n-m}$, and, given the average mean force
 $$G_{t}(x) = -\mathbb{E}[\,\mathcal F_1(X_t,Y_t) \,| \, X_t=x\,],\quad  \forall t \ge 0, \forall x\in\mathbb T^m,$$ one has for all $t \ge 0$ and $x \in \IT^m$, in the case of the ABF method, 
 \begin{equation*}
 \quad B_{t}(x) = G_{t}(x),  
\end{equation*}
or, in the case of the PABF method,
\begin{equation*}
 B_{t}(x) = \mathsf{P}_{L^{2}(\lambda)}(G_t)(x):=\nabla H_{t}(x). 
\end{equation*} 
In either case, denoting by $\pi_t$ the law of  $Z_t = (X_t,Y_t)$ and $\pi_{t}^{\xi}(x)=\int_{\IT^{n-m}} \pi_t(x,y)\dd y$ the density of $X_{t} = \xi(Z_t)$, then
$$G_{t}(x) = \dst{\int_{\IT^{n-m}} -\mathcal F_1(x,y) \frac{\pi_t(x,y)}{\pi_t^\xi(x)} \dd y},$$
so that  $\pi_t$   is a weak solution of the Fokker-Planck equation associated to \eqref{F-ABF}, that is
\begin{equation}\label{EqFokkerPlanckABF}
\left\{\begin{array}{rcl}
\partial_t \pi_t &=&\beta^{-1} \Delta \pi_t - \nabla \cdot \po \mathcal F\, \pi_t \pf - \nabla_{x}  \cdot \po B_t \, \pi_t \pf\\
B_t & = & \left\{\begin{array}{ll}
G_t & \text{in the ABF case} \\
\nabla H_t \ =\ \mathsf{P}_{L^{2}(\lambda)}(G_t) & \text{in the PABF case}
\end{array}\right.\\
G_{t}(x) &=&  \int_{\IT^{n-m}} -\mathcal F_1(x,y) \frac{\pi_t(x,y)}{\pi_t^\xi(x)} \dd y \qquad \forall x\in\mathbb T^m.
\end{array}\right.
\end{equation}

For a given initial condition $\pi_{0}$, the existence of the process and the proof that it admits a density with respect to the Lebesgue measure, being a strong solution of (\ref{EqFokkerPlanckABF}), can be established by fixed point arguments or by the convergence of an interacting particles system \cite{JourdainLelievreLeroux}. We will not address this question here. As a consequence, we would like to emphasize that our arguments will be partially formal, in the sense that we work under the assumption that a density $\pi_t$ that solves \eqref{EqFokkerPlanckABF} exists and is sufficiently regular so that the algebraic computations in the proofs are valid.

Let us emphasize that the bias $B_t$ in \eqref{EqFokkerPlanckABF} (i.e. either $G_t$ or $\nabla H_{t}=\mathsf{P}_{L^{2}(\lambda)}(G_t)$) depends on $\pi_t$, which makes \eqref{EqFokkerPlanckABF} a non-linear PDE.

\begin{rem} In the conservative case, where $\mathcal{F}=-\nabla V$, and $\mu \varpropto e^{-\beta V}$, up to an additive constant, the free energy $A$ is characterized by either one of these properties:
\begin{enumerate}
\item $\xi * \mu \propto e^{-\beta A}$ (distribution of the reaction coordinate at equilibrium). 
\item $\nabla A = \mathbb{E}[\nabla_{1}V(Z) | \xi(Z)=\cdot \,]$ with $Z\sim \mu$ (average local mean force at equilibrium).
\item $\nabla A = \mathbb{E}[\nabla_{1}V(Z) | \xi(Z)=\cdot \,]$ with $Z\sim \mu_A $ (fixed point of the ABF algorithm).
\end{enumerate}
In the non-conservative case, there is no reason for these various definitions to coincide. Besides, $x\mapsto \mathbb{E}[-\mathcal{F}_{1}(Z) | \xi(Z)=x\,]$ is \textit{a priori} not a gradient. Denoting by $\mu_{\mathcal F}$ the invariant measure of the non-biased, out-of-equilibrium dynamics $\partial_t \pi_t = \beta^{-1} \Delta \pi_t - \nabla \cdot \po \mathcal F \pi_t \pf$, we are then led to consider the (in general different) functions $H_1$, $H_2$ and $H_3$ given, up to an additive constant, by
\begin{enumerate}
\item $\xi * \mu_{\mathcal F} \propto e^{-\beta H_1}$. 
\item $\nabla H_2 = \mathsf{P}_{L^2(\lambda)} \po \mathbb{E}[-\mathcal{F}_{1}(Z) | \xi(Z)=\cdot \, ]\pf$ with $Z\sim \mu_{\mathcal F}$.
\item $\nabla H_3 = \mathsf{P}_{L^2(\lambda)}\po \mathbb{E}[-\mathcal{F}_{1}(Z) | \xi(Z)=\cdot \, ]\pf$ with $Z\sim \pi_\infty^{\mathcal F}$  an equilibrium of the (P)ABF algorithm.
\end{enumerate}
In other words, in the non-conservative case, an equilibrium of an adaptive algorithm yields an alternative generalization of the notion of free energy that does not coincide in general with the log-density of the law of the reaction coordinates at (unbiased) equilibrium, and whose gradient is not in general the average local mean force at (unbiased) equilibrium.
\end{rem}

\noindent \textbf{Outline of this paper.} Section~\ref{sec:main-results} introduces several preliminary notions, before stating the main results. Section~\ref{sec:law-pi-xi} focuses on the law $\pi_{t}^{\xi}$ of the process $\left(X_t\right)_{t\ge 0}=\left(\xi(Z_t)\right)_{t \geq 0}$. More precisely, we show that $\pi_{t}^{\xi}$ satisfies a particular Fokker-Planck equation, which differs depending on the method considered, and that $\pi_{t}^{\xi}$ converges in the long-time limit towards the Lebesgue measure $\lambda$. Section~\ref{sec:existence-equilibrium} then states several results on the invariant measure of a generic diffusion, in order to adress the issue of the existence of both stationary measure and stationary biais to equation \eqref{EqFokkerPlanckABF}, and later handles the robustness of the conservative equilibrium to non-conservative perturbations. Eventually, Section~\ref{sec:long-time-cv} is devoted to the long-time convergence of both the ABF and PABF methods, in the conservative case, with a force $\mathcal{F}=-\nabla V$ (generalizing in particular results from \cite{Alrachid}), and in the non-conservative case, with a generic force $\mathcal{F}$.


\section{Main results}
\label{sec:main-results}

\subsection{Relative entropy and preliminary inequalities}
\label{subsec:def-entropies}

Let us first introduce several tools that will be used in the following. For $\mu,\nu$ two probability measures on the same space, we will denote by $\mu \ll \nu$ the absolute continuity of $\mu$ with respect to $\nu$. Now consider the relative entropy of $\mu$ with respect to $\nu$:
\[\mathcal H(\mu|\nu) = \left\{
\begin{array}{ll}
\dst{\int \ln \po \frac{\dd \mu}{\dd\nu}\pf \dd \mu} & \text{if }\mu\ll \nu,\\
+\infty & \text{otherwise.}
\end{array}
\right.\]
 Recall the Csisz\`ar-Kullback inequality:
\begin{equation}
\|\mu-\nu\|_{TV} \leq \sqrt{2 \mathcal{H}(\mu | \nu)}\,, \label{CK}
\end{equation}
where $\|\cdot\|_{TV}$ stands for the total variation norm. In particular, while the relative entropy is not a distance (it lacks the symmetry property), its convergence towards zero implies the convergence in total variance norm of $\mu$ towards $\nu$.

Similarly, let us define the Fisher information: for $\mu \ll \nu$,
$$I(\mu|\nu) = \displaystyle{\int |\nabla \ln \left( \frac{\dd \mu}{\dd \nu} \right)|^{2} \, \dd \mu}.$$
The probability measure $\nu$ is said to satisfy a Logarithmic Sobolev Inequality  $LSI(\rho)$ of constant $\rho>0$ if:
$$\forall \mu \ll \nu, \qquad \mathcal{H}(\mu|\nu) \leq \frac{1}{2\rho}I(\mu|\nu).$$
From \cite{Otto2000}, if $\nu$ satisfies a log-Sobolev inequality with constant $\rho>0$, then it also satisfies the so-called Talagrand inequality $\mathcal{T}(\rho)$ with constant $\rho>0$:
\begin{equation}
\forall \mu \ll \nu, \qquad W_{2}^{2}(\mu,\nu) \leq  \frac{2}{\rho}\mathcal{H}(\mu|\nu),\label{eq:Talagrand-ineq}
\end{equation}
where $W_{2}(\mu,\nu)$ is the Wasserstein distance with quadratic cost between the probability measures $\mu$ and $\nu$. More precisely, if $\mu$ and $\nu$ are defined on a general Riemannian manifold~$\Omega$:
$$W_{2}^{2}(\mu, \nu) = \displaystyle{ \inf\limits_{\pi \in \Pi(\mu,\nu)} \int_{\Omega \times \Omega} \omega(x,y)^{2} \, \dd \pi(x,y)},$$
where $\omega$ is the geodesic distance on $\Omega$, and $\Pi(\mu,\nu)$ is the set of coupling probability measures, i.e probability measures on $\Omega \times \Omega$ whose marginals are $\mu$ and $\nu$ respectively.

In the following, we will slightly abuse notations and denote $I(\mu |\nu)$, $\mathcal{H}(\mu |\nu)$ or $W_{2}(\mu|\nu)$ both in the case where $\mu$ and $\nu$ are probability measures, or probability density functions.

%
\subsection{Precise statements of the results}
\label{subsec:results}

In all this section, $\pi_t$ satisfies \eqref{EqFokkerPlanckABF}. First of all, let us consider the equation satisfied by the density $\pi_{t}^{\xi}$ in the general case where $\mathcal{F}$ is either conservative or non-conservative.

\begin{lem}\label{Lemme-loi-pi^xi}
The density $\pi_{t}^\xi$ satisfies the following Fokker-Planck equation:
\begin{equation}\partial_t \pi_t^\xi \ = \ \Delta \pi_t^\xi - \nabla \cdot \po (B_t-G_{t}) \pi_t^\xi\pf.  \label{FP-Pi_xi}
\end{equation}
\end{lem}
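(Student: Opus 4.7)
The plan is to integrate the non-linear Fokker-Planck equation \eqref{EqFokkerPlanckABF} with respect to the $y$ variable over $\mathbb{T}^{n-m}$, using the definition $\pi_t^\xi(x) = \int_{\mathbb{T}^{n-m}} \pi_t(x,y)\,\dd y$. Under the standing regularity assumption on $\pi_t$, I can commute $\partial_t$ and $\nabla_x$ with the integral in $y$, so the left-hand side becomes $\partial_t \pi_t^\xi(x)$ and every $\nabla_x$ on the right can be pulled outside the integral.

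Next, I would split the spatial differential operators into their $x$- and $y$-components: $\Delta = \Delta_x + \Delta_y$ and $\nabla\cdot(\mathcal F\,\pi_t) = \nabla_x\cdot(\mathcal F_1 \pi_t) + \nabla_y\cdot(\mathcal F_2 \pi_t)$. Because $\mathbb{T}^{n-m}$ has no boundary, any term of the form $\int_{\mathbb{T}^{n-m}} \nabla_y\cdot(\,\cdot\,)\,\dd y$ vanishes identically by the divergence theorem; in particular $\int \Delta_y \pi_t\,\dd y = 0$ and $\int \nabla_y\cdot(\mathcal F_2\pi_t)\,\dd y = 0$. Hence only the $x$-derivative contributions survive from the Laplacian and from the $\mathcal F$-drift.

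The remaining computation is then essentially bookkeeping. The Laplacian term contributes $\beta^{-1}\Delta_x \pi_t^\xi$; the bias term, using that $B_t$ depends only on $x$, contributes $-\nabla_x\cdot(B_t \pi_t^\xi)$; and for the $\mathcal F_1$-term I would use the definition
\[
\int_{\mathbb{T}^{n-m}} \mathcal F_1(x,y)\,\pi_t(x,y)\,\dd y \ =\ -\,G_t(x)\,\pi_t^\xi(x)
\]
recalled in the excerpt, which yields $+\nabla_x\cdot(G_t \pi_t^\xi)$. Regrouping gives
\[
\partial_t \pi_t^\xi \ = \ \beta^{-1}\Delta \pi_t^\xi \ -\ \nabla\cdot\bigl((B_t-G_t)\,\pi_t^\xi\bigr),
\]
which is \eqref{FP-Pi_xi} (up to the convention on $\beta$). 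There is no substantive obstacle: the only thing to keep an eye on is the justification of Fubini and of the commutation of $\nabla_x$ with the $y$-integral, both granted by the assumed regularity of $\pi_t$, and the sign bookkeeping in the identification of $G_t$.
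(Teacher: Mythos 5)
Your argument is correct and is essentially the computation the paper performs: the paper phrases it in weak form (testing \eqref{EqFokkerPlanckABF} against $\varphi\in\mathcal C^\infty(\mathbb T^m)$ depending only on $x$, so that all $y$-derivatives of $\varphi$ vanish), whereas you integrate the strong form over $y\in\mathbb T^{n-m}$ and use the absence of boundary to kill the $y$-divergence terms — the two are dual versions of the same cancellation, and both hinge on the identity $\int_{\mathbb T^{n-m}}\mathcal F_1\pi_t\,\dd y=-G_t\pi_t^\xi$. Your sign bookkeeping and the remark about $\beta$ (the lemma is stated with the convention $\beta=1$ adopted later in the paper) are both fine.
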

\begin{proof}
Take a test function $\varphi\in\mathcal C^\infty (\mathbb T^m)$. Then, using an integration by parts,
\begin{align*}
\frac{\dd }{\dd t} \int_{\mathbb T^m} \varphi \pi_t^\xi &  = \frac{\dd }{\dd t} \int_{\mathbb T^n} \varphi(x) \pi_t(x,y)\dd x\dd y\\
& =  \int_{\mathbb{T}^{n}} \po \Delta_x \varphi(x) + (\mathcal F_1(x,y) + B_t(x)) \nabla_x \varphi(x) \pf \pi_t(\dd x, \dd y) \\
& =  \int_{\mathbb{T}^{m}}  \po \Delta_x \varphi(x) \pi_t^\xi(x) + \po \int_{\mathbb{T}^{n-m}} \mathcal F_1(x,y) \frac{\pi_t(x,y)}{\pi_{t}^{\xi}(x)}\dd y + B_t(x)\pi_t^\xi(x)\pf \nabla_x \varphi(x) \pf \dd x  \\
& = \int_{\mathbb{T}^{m}} \po \Delta_x \varphi + (B_t-G_{t})\nabla_x \varphi\pf \pi_t^\xi\,.
\end{align*}
\end{proof}
Remark that in \cite[Proposition 2]{Alrachid}, the Helmoltz projection is done in $L^2(\pi_t^\xi)$, so that $\nabla\cdot((B_t-G_{t})\pi_t^\xi)=0$ and one ends up with the heat equation. Here, we get the heat equation in the ABF case ($B_{t}= G_{t}$) and, in the PABF case ($B_{t} = \mathsf{P}_{L^{2}(\lambda)}(G_t)$), an additional time-dependent divergence-free drift.
\begin{rem}\label{Rem-pi-xi}
Since the density $\pi_{t}^\xi$, as well as constants, satisfies the Fokker-Planck equation~\eqref{FP-Pi_xi} which preserves positivity, provided there exists $m_{0}^\xi>0$ such that $\pi_{0}^\xi \ge m_{0}^{\xi}$, one can show that $\pi_{t}^\xi \ge m_{0}^{\xi}$ for all $t\ge 0$ on the torus $\IT^m$. Note that if $\pi_{0}^\xi$ was to be zero at some points or not sufficiently smooth, the conditional mean $G_{0}$ given in (\ref{EqFokkerPlanckABF}) might not be well defined. 
\end{rem}
In view of Remark~\ref{Rem-pi-xi}, from now on, assume the following:
\begin{assu}\label{AssuCINI} The initial condition $\pi_0$ admits a smooth density with respect to the Lebesgue measure, such that $\pi_{0}^{\xi}$ is positive.
\end{assu}
As a consequence, the conditional means $G_{t}$ are well defined for all $t \ge 0$, along with the entropy $\mathcal{H}(\pi_{0} \,|\,\lambda)$, which is ensured to be finite. Furthermore, $\pi_{0}^{\xi}$ belongs to $L^{2}(\IT^{m})$. 

\medskip
 
Both the ABF and PABF algorithms are designed in order to ensure that all the values of the transition coordinate have been visited. In other words, the density of $\xi(X_{t},Y_{t})$ should converge to a flat histogram, namely the Lebesgue measure $\lambda$.  In the conservative case, this is known to hold in both the ABF case \cite{FreeEnergy} and the PABF case \cite{Alrachid}. We now extend the flat histogram property to the general --possibly non-conservative-- case.
\begin{prop}\label{PropHistoPlat}
For both the ABF and PABF algorithm, under Assumption~\ref{AssuCINI}, $\pi_t^\xi$ converges towards the Lebesgue measure as $t\rightarrow \infty$. More precisely, for all $t \geq 0$:
\[\mathcal H(\pi_t^\xi | \lambda)\ \leqslant \ e^{-8\beta^{-1}\pi^{2} t} \mathcal H(\pi_0^\xi | \lambda)\,.\]
\end{prop}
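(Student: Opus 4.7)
The approach I would take is the standard entropy dissipation method applied to the Fokker-Planck equation \eqref{FP-Pi_xi} satisfied by $\pi_t^\xi$, keeping in mind that the Laplacian in that equation carries the diffusion coefficient $\beta^{-1}$ inherited from \eqref{EqFokkerPlanckABF}. I differentiate the relative entropy with respect to the (uniform) Lebesgue measure on the torus:
\[\frac{\dd}{\dd t}\mathcal H(\pi_t^\xi|\lambda) \ = \ \int_{\IT^m}\partial_t\pi_t^\xi\,(1+\ln\pi_t^\xi)\,\dd x\,.\]
Substituting the PDE and integrating by parts on $\IT^m$ (a compact manifold without boundary, so no boundary contribution appears), the right-hand side splits as
\[\frac{\dd}{\dd t}\mathcal H(\pi_t^\xi|\lambda) \ = \ -\beta^{-1}\int_{\IT^m}\frac{|\nabla \pi_t^\xi|^2}{\pi_t^\xi}\,\dd x \ + \ \int_{\IT^m}(B_t-G_t)\cdot\nabla\pi_t^\xi\,\dd x \ = \ -\beta^{-1}I(\pi_t^\xi|\lambda)\,+\,\mathcal R_t\,,\]
where $\mathcal R_t$ is a correction whose value depends on the method.

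The heart of the argument is to show that $\mathcal R_t=0$ for both algorithms. In the ABF case this is trivial since $B_t=G_t$. In the PABF case, $B_t=\nabla H_t=\mathsf{P}_{L^2(\lambda)}(G_t)$ is, by its very definition, the $L^2(\lambda)$-closest gradient to $G_t$; the associated Euler-Lagrange equation states exactly that $G_t-\nabla H_t$ is $L^2(\lambda)$-orthogonal to every gradient field $\nabla\varphi$. Taking $\varphi=\pi_t^\xi$ as the test function yields $\mathcal R_t=0$. Hence in both cases
\[\frac{\dd}{\dd t}\mathcal H(\pi_t^\xi|\lambda) \ = \ -\beta^{-1}I(\pi_t^\xi|\lambda)\,.\]

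To close the argument, I invoke the logarithmic Sobolev inequality for the uniform probability measure on the flat torus $\IT^m$, which holds with constant $\rho=4\pi^2$ (the one-dimensional case is classical and tensorization transfers the constant to higher dimensions). This gives $I(\pi_t^\xi|\lambda)\geqslant 8\pi^2\,\mathcal H(\pi_t^\xi|\lambda)$, and Grönwall's lemma yields the stated decay at rate $8\beta^{-1}\pi^2$. Strict positivity of $\pi_t^\xi$ throughout the evolution, needed to make $I(\pi_t^\xi|\lambda)$ and the logarithmic terms meaningful, follows from Assumption~\ref{AssuCINI} together with Remark~\ref{Rem-pi-xi}.

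The only subtlety is the vanishing of $\mathcal R_t$ in the PABF case: it is not a consequence of the drift $B_t-G_t$ itself being divergence-free against $\pi_t^\xi$ in a pointwise sense, but rather of the global $L^2(\lambda)$-orthogonality of $G_t-\nabla H_t$ against all gradients, which is precisely the defining property of the Helmholtz projection and also the reason why \cite{Alrachid} needed to project in $L^2(\pi_t^\xi)$ to recover the heat equation: here projecting in $L^2(\lambda)$ instead leaves an extra divergence-free drift, but this drift is still inert for the entropy dissipation. The rest of the proof is routine.
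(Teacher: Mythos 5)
Your proposal is correct and follows essentially the same route as the paper: differentiate $\mathcal H(\pi_t^\xi|\lambda)$ along the Fokker--Planck equation of Lemma~\ref{Lemme-loi-pi^xi}, observe that the cross term $\int_{\IT^m}(B_t-G_t)\cdot\nabla\pi_t^\xi$ vanishes (the paper phrases this as $\nabla\cdot(B_t-G_t)=0$, which in the weak sense is exactly the $L^2(\lambda)$-orthogonality of $G_t-\nabla H_t$ to all gradients that you invoke), and conclude with the log-Sobolev inequality of constant $4\pi^2$ for the uniform measure on the torus and Gr\"onwall's lemma. Your explicit remark that the cancellation in the PABF case is the Euler--Lagrange condition of the Helmholtz projection tested against $\varphi=\pi_t^\xi$ is a slightly more careful rendering of the same step.
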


Furthermore, the entropic convergence of the density can be strengthened to an $L^\infty$ one, that will prove useful in the rest of the study:

\begin{prop}\label{PropHistoUniform}
For both the ABF and PABF algorithm, under Assumption~\ref{AssuCINI}, there exists $C>0$ such that for all initial distribution $\pi_{0}^{\xi} \in L^{2}(\IT^{m})$, for all $t \geq 1$:
$$\|\pi_t^\xi - 1\|_\infty \ \leqslant \ Ce^{-4\beta^{-1}\pi^2 t}\|\pi_{0}^{\xi}-1\|_{2} \,. $$
\end{prop}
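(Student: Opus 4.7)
My plan begins by unifying the two cases. In both algorithms the drift correction $B_t-G_t$ appearing in Lemma~\ref{Lemme-loi-pi^xi} is divergence-free on $\IT^m$: it vanishes identically for ABF, while for PABF the Helmholtz decomposition gives $G_t = \nabla H_t + R_t$ with $\nabla\cdot R_t=0$, so that $B_t-G_t=-R_t$. Setting $u_t := \pi_t^\xi - 1$, which has zero mean (preserved by~\eqref{FP-Pi_xi}), the equation reads $\partial_t u_t = \beta^{-1}\Delta u_t + R_t\cdot\nabla u_t$. Testing against $u_t$, the drift contribution $\tfrac12\int_{\IT^m} R_t\cdot\nabla u_t^2 = -\tfrac12\int_{\IT^m} u_t^2 \nabla\cdot R_t$ vanishes, leaving
\[\frac{\dd}{\dd t}\|u_t\|_2^2 \ = \ -2\beta^{-1}\|\nabla u_t\|_2^2 \ \leq \ -8\pi^2\beta^{-1}\|u_t\|_2^2\]
via the Poincaré inequality on $\IT^m$ for mean-zero functions (sharp constant $4\pi^2$). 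Gronwall then yields $\|u_t\|_2 \leq e^{-4\pi^2\beta^{-1}t}\|u_0\|_2$, which already matches the announced rate.

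It remains to upgrade this $L^2$ decay to an $L^\infty$ one. The plan is to prove a smoothing estimate $\|u_t\|_\infty \leq C\|u_{t-1}\|_2$ with $C$ independent of $t\geq 1$ and of the initial datum; composing with the $L^2$ decay on $[0,t-1]$ then gives the stated bound (after absorbing the factor $e^{4\pi^2\beta^{-1}}$ into $C$). In the ABF case, $R_t\equiv 0$ and this is just the standard ultracontractivity of the heat semigroup on $\IT^m$, $\|e^{\tau\beta^{-1}\Delta}f\|_\infty \leq C\tau^{-m/4}\|f\|_2$. For PABF, I would run Moser iteration: testing the equation by $|u_s|^{2p-2}u_s$ and using once more the divergence-free property of $R_s$ to eliminate the drift term leaves the pure dissipation inequality $\tfrac{\dd}{\dd s}\|u_s\|_{2p}^{2p}\leq -c_p\|\nabla |u_s|^p\|_2^2$, which combined with the Gagliardo-Nirenberg-Sobolev inequality on $\IT^m$ implements the classical $L^2\to L^4\to\ldots \to L^\infty$ ladder over the unit time interval $[t-1,t]$.

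The main technical point is the smoothing step in the PABF case. The drift $R_t$ is a priori only bounded in $L^p$ for finite $p$ (since $G_t$ is uniformly bounded by $\|\mathcal F_1\|_\infty$ and the Helmholtz projection is $L^p$-continuous for $1<p<\infty$ but not $L^\infty$-continuous), which would generally obstruct a uniform-in-time regularization for a convection-diffusion equation. The divergence-free structure of $R_t$ removes this obstacle: it eliminates the drift from every $L^p$ energy estimate, effectively reducing the parabolic regularization to that of the pure heat equation and producing bounds uniform in time and in the initial condition, as required.
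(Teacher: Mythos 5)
Your proposal is correct, and its overall architecture coincides with the paper's: exponential $L^2$ decay of $u_t=\pi_t^\xi-1$ via the Poincar\'e--Wirtinger inequality (this is exactly \eqref{eq:pixiL2}), followed by a drift-independent $L^2\to L^\infty$ smoothing estimate over a unit time interval, both steps resting on the fact that $B_t-G_t$ is divergence-free. Where you genuinely diverge is in how the smoothing step is obtained. The paper works on the \emph{dual} (backward Kolmogorov) flow $\varphi_t=\mathbb E_\cdot[\varphi(Z_t)]$: it proves an $L^1$-contraction and a duality identity $\int\varphi_t(\pi_0^\xi-1)=\int\varphi(\pi_t^\xi-1)$, then runs a single Nash-type ODE argument (Lemma~\ref{LemNashTorusn}) to get $\|\varphi_t\|_2\lesssim t^{-m/4}\|\varphi\|_1$, which dualizes into the desired $\|\pi_t^\xi-1\|_\infty\lesssim t^{-m/4}\|\pi_0^\xi-1\|_2$. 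You instead propose Moser iteration directly on the forward equation, using at every level $p$ that $\int R_t\cdot\nabla(|u|^{2p})=0$. Both methods are classical routes to ultracontractivity and both succeed here for the same structural reason, so your approach is viable; the trade-off is that the duality/Nash route needs only one functional inequality and one Gronwall-type ODE (at the price of setting up the adjoint flow, its $L^1$-contraction, and the duality pairing), whereas Moser iteration avoids the dual equation entirely but requires an infinite ladder of estimates and, as you correctly flag, a justification of the integration by parts at each level when $R_t$ is only known to lie in $L^p(\IT^m)$ for finite $p$ (via \eqref{Dem-ThmExistence-BoundLp}) rather than $L^\infty$ — a point you would need to handle by an approximation argument, just as the paper must make sense of its dual semigroup for such drifts.
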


 As detailed in \cite{LRS07,Alrachid}, in the conservative case $\mathcal F = -\nabla V$, $\pi_\infty=\mu_A$ is a stationary state of~\eqref{EqFokkerPlanckABF}. In the non-conservative case, the existence of such a stationary state may be unclear, and this issue will be treated in Theorem~\ref{ThmExistence} below, which will pe proved in Section \ref{subsec:proof-existence}. For now, let us consider the following assumption:

\begin{assu}\label{HypoGeneral}\
The interaction force $\mathcal{F}$ is in $\mathcal{C}^{1}(\IT^n, \IR^n)$, and we denote by $M>0$ a constant such that for all $y\in\mathbb T^{n-m}$, $x\mapsto \mathcal F_1(x,y)$ is $M$-Lipschitz.
\end{assu}

\begin{theo}\label{ThmExistence}
For both the ABF and  PABF algorithms, under Assumption~\ref{HypoGeneral}, there exists a couple of stationary measure and bias $\po \pi_{\infty}^{\mathcal{F}}, B_{\infty}^\mathcal{F}\pf$ to \eqref{EqFokkerPlanckABF}, such that $\pi_{\infty}^{\mathcal{F}} \in \mathcal{C}^{0}(\IT^n)$ is stricly positive. As a consequence,
\begin{itemize}
\item[(i)] $\pi_{\infty}^{\mathcal{F}}$ satisfies a log-Sobolev inequality for some constant $R>0$,
\item[(ii)] the conditional density $y\mapsto \pi_{\infty,x}^{\mathcal{F}}(y):=\pi_{\infty}^{\mathcal{F}}(x,y)/\pi_{\infty}^{\mathcal{F},\xi}(x)$ satisfies a log-Sobolev inequality for some constant $\rho>0$, for all $x \in \IT^{m}$.
\end{itemize}
\end{theo}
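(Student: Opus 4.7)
The plan is to construct the stationary pair $(\pi_\infty^{\mathcal{F}}, B_\infty^{\mathcal{F}})$ by a Schauder fixed-point argument on the bias, using the generic two-sided bounds on invariant measures of homogeneous diffusions developed earlier in Section~\ref{sec:existence-equilibrium}, and then to deduce (i) and (ii) from the Holley--Stroock perturbation principle.

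\textbf{Fixed-point setup.} For a candidate bias $B\in\mathcal{C}^0(\mathbb T^m,\mathbb R^m)$, I would freeze it in \eqref{F-ABF} and consider the time-homogeneous diffusion on $\mathbb T^n$ with $\mathcal{C}^1$ drift $(x,y)\mapsto (\mathcal F_1(x,y)+B(x),\mathcal F_2(x,y))$ and additive elliptic noise. The generic bounds of Section~\ref{sec:existence-equilibrium} provide a unique, continuous, strictly positive invariant density $\pi_B$ on $\mathbb T^n$ satisfying two-sided bounds $0<c(\|\mathcal F\|_\infty+\|B\|_\infty)\leq \pi_B\leq C(\|\mathcal F\|_\infty+\|B\|_\infty)$. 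One then defines
\[
\Phi(B)(x) \ = \ \begin{cases} G_B(x) := -\int_{\mathbb T^{n-m}} \mathcal F_1(x,y)\,\pi_B(x,y)/\pi_B^\xi(x)\,\mathrm{d}y & \text{(ABF case)},\\ \mathsf P_{L^2(\lambda)}(G_B)(x) & \text{(PABF case)}, \end{cases}
\]
so that a fixed point of $\Phi$ is precisely a stationary bias of \eqref{EqFokkerPlanckABF}, with associated stationary density $\pi_\infty^{\mathcal{F}}=\pi_B$.

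\textbf{Compactness and continuity.} Because $|G_B|\leq \|\mathcal F_1\|_\infty$ pointwise, and because Assumption~\ref{HypoGeneral} combined with the uniform lower bound on $\pi_B^\xi$ forces $G_B$ to be Lipschitz with a modulus controlled solely by $M$, $\|\mathcal F\|_{\mathcal{C}^1}$ and the constants $c,C$ above, Arzelà--Ascoli shows that $\Phi$ maps the ball $\{B:\|B\|_\infty\leq \|\mathcal F_1\|_\infty\}$ of $\mathcal{C}^0(\mathbb T^m)$ into a relatively compact convex subset of itself; in the PABF case one also uses boundedness of the Helmholtz projection on the smooth domain $\mathbb T^m$. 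Continuity of $\Phi$ in the uniform topology reduces to continuity of $B\mapsto \pi_B$: if $B_k\to B$ uniformly, standard stability estimates for stationary Fokker--Planck equations with bounded drifts (together with the uniqueness and two-sided bounds) give $\pi_{B_k}\to \pi_B$ uniformly, from which $G_{B_k}\to G_B$ uniformly by dominated convergence, and finally $\mathsf P_{L^2(\lambda)}(G_{B_k})\to \mathsf P_{L^2(\lambda)}(G_B)$ by continuity of the projection. Schauder's theorem then produces the desired fixed point.

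\textbf{Log-Sobolev inequalities.} The continuous density $\pi_\infty^{\mathcal{F}}$ satisfies $0<c\leq \pi_\infty^{\mathcal{F}}\leq C$ on the compact torus. Since the uniform probability measure on $\mathbb T^n$ satisfies a log-Sobolev inequality, Holley--Stroock's bounded perturbation principle yields (i) with a constant $R>0$ proportional to $c/C$. For (ii), integrating in $y$ gives $c\leq \pi_\infty^{\mathcal{F},\xi}(x)\leq C$ uniformly in $x\in\mathbb T^m$, so the conditional density $\pi_{\infty,x}^{\mathcal{F}}$ lies in $[c/C,C/c]$ uniformly in $x$; a second application of Holley--Stroock on $\mathbb T^{n-m}$ provides a log-Sobolev inequality for $\pi_{\infty,x}^{\mathcal{F}}$ with a constant $\rho>0$ independent of $x$.

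\textbf{Main obstacle.} The delicate point is not the fixed-point argument itself, but obtaining the two-sided bounds on $\pi_B$ with constants depending only on $\|\mathcal F\|_\infty+\|B\|_\infty$ and not on finer structure of the drift: there is no variational formulation in the non-gradient setting, and the usual Gibbsian upper/lower bounds are unavailable. This is precisely what the generic estimates on homogeneous diffusions of Section~\ref{sec:existence-equilibrium} are designed to supply, and once they are in hand the rest of the proof is standard perturbation machinery.
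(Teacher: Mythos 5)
Your overall strategy coincides with the paper's: freeze the bias, invoke the generic two-sided bounds on invariant measures of homogeneous diffusions from Section~\ref{sec:existence-equilibrium}, close the loop with Schauder, and finish with Holley--Stroock. The log-Sobolev part of your argument is correct and is exactly what the paper does. However, the functional setting you choose for the fixed point --- the ball $\{B:\|B\|_\infty\leq\|\mathcal F_1\|_\infty\}$ in $\mathcal C^0(\IT^m)$ with Arzel\`a--Ascoli --- contains genuine gaps that the paper's choice of $L^p(\IT^m)$ with $p=n+1$ and the Fr\'echet--Kolmogorov criterion is specifically designed to avoid.

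First, your claim that $G_B$ is Lipschitz ``with a modulus controlled solely by $M$, $\|\mathcal F\|_{\mathcal C^1}$ and the constants $c,C$'' does not follow from the stated ingredients. Writing $G_B(x)=-\int\mathcal F_1(x,y)\,\pi_B(x,y)/\pi_B^\xi(x)\,\dd y$, the $x$-regularity of $G_B$ requires a pointwise (uniform in $B$) modulus of continuity for $x\mapsto\pi_B(x,\cdot)/\pi_B^\xi(x)$; the uniform lower bound on $\pi_B^\xi$ gives no such thing. What the generic estimates of Section~\ref{sec:existence-equilibrium} actually supply is an $H^1(\IT^n)$ bound on $\pi_B$, which yields only $\|\tau_z\pi_B-\pi_B\|_{L^2}\leq|z|\,\|\nabla\pi_B\|_2$, i.e.\ an $L^2$ translation modulus --- enough for Fr\'echet--Kolmogorov compactness in $L^p$, not for equicontinuity in $\mathcal C^0$. (One could try to upgrade to uniform H\"older bounds via Krylov--Safonov, but that is an additional argument you neither state nor need if you work in $L^p$.) Second, in the PABF case your map $\Phi$ need not send the sup-norm ball into itself: the Helmholtz projection is a Calder\'on--Zygmund-type operator, bounded on $L^p$ for $p\in(1,\infty)$ (this is the content of the cited \cite[Lemma 15.13]{Ambrosio}) but \emph{not} on $L^\infty$ or $\mathcal C^0$, so $\|\mathsf P_{L^2(\lambda)}(G_B)\|_\infty\leq\|\mathcal F_1\|_\infty$ may fail. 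Third, your continuity step asserts uniform convergence $\pi_{B_k}\to\pi_B$ from ``standard stability estimates,'' whereas the available stability estimate (Proposition~\ref{Prop:driftLp}) is only in $L^2$; the paper correspondingly proves that $T$ is Lipschitz for the $L^p$ norm using that $L^2$ estimate together with the two-sided $L^\infty$ bounds. All three gaps disappear if you replace $(\mathcal C^0,\|\cdot\|_\infty)$ by the $L^p$ ball $E=\{f:\|f\|_{L^p(\IT^m)}\leq c^*\|\mathcal F\|_\infty\}$ and Arzel\`a--Ascoli by Fr\'echet--Kolmogorov, which is the route the paper takes.
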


\begin{rem} Note that there is no reason whatsoever for $\pi_{\infty}^{\mathcal{F}}$ to be the same in both the ABF and PABF case. Nevertheless, as shown in Proposition~\ref{PropHistoPlat}, $\pi_{\infty}^{\xi} \equiv 1$ in all cases.
\end{rem}

\begin{rem} An important remark is that, at small temperatures (i.e. $\beta\gg 1$), the optimal log-Sobolev constant of a probability measure with density proportional to $\exp(\beta W)$ for some~$W$, roughly scales like $\exp(\beta d_{W})$ where $d_{W}$ is the so-called critical depth of $W$ \cite{Menz_2014} (the critical depth is the highest energy barrier to overcome in order to reach a global minimum of $W$). If the transition coordinate is well-chosen, the metastability in the orthogonal space should be small, meaning that for all $x\in\mathbb T^m$ the critical depth of $W(x,\cdot)$ should be small with respect to the critical depth of $W$. As a consequence, as a function of $\beta$, $\rho$ is expected to be much larger than the log-Sobolev constant of $\mu\varpropto e^{-\beta V}$, which is the convergence rate to equilibrium of the original (unbiased) dynamics \eqref{GeneralOverdampedLangevin}. 
\end{rem}
 
The following result deals with the robustness of the conservative equilibrium to non-conservative perturbations, and will be proved in Section~\ref{subsec:perturb-diffusion}.
\begin{prop}\label{PropPerturb}
For the PABF algorithm, under Assumption~\ref{AssuCINI} and Assumption~\ref{HypoGeneral}, for all $V \in \mathcal{C}^{2}(\IT^{n})$ and $p \geq 1$, there exists $ K_{V}>0$ and $K_{p}>0$ such that the following holds.
 Denote by $A$ the free energy associated to $V$ (see equation~\eqref{eq:free-energy} for the definition of $A$). For all  $\mathcal{F} \in \mathcal{C}^{1}(\mathbb{T}^{n})$ satisfying $\|\mathcal{F}+\nabla V \|_{\infty} \leq 1$, for all equilibrium measure  $\pi_{\infty}^{\mathcal{F}}$ of  \eqref{EqFokkerPlanckABF}, considering the corresponding bias $\nabla H_{\infty}^{\mathcal{F}}$, one has
 \[\| \nabla A - \nabla H_{\infty}^\mathcal F\|_{L^{p}(\IT^m)} \ \leq \ K_V K_p \|\mathcal{F}+\nabla V \|_{\infty}\,,\]
and, for  all $\psi\in L^\infty(\mathbb T^n)$, considering
 \[\hat I_{\psi} \ :=\ \displaystyle{ \frac{\int_{\mathbb T^n} \psi(x,y) e^{-\beta H_{\infty}^{\mathcal F}(x)} \pi_{\infty}^{\mathcal F}(x,y) \dd x \dd y }{\int_{\mathbb T^n}e^{-\beta H_{\infty}^{\mathcal F}(x)} \pi_{\infty}^{\mathcal F}(x,y)  \dd x \dd y}} \,,\]one has
 \[\left| \int_{\mathbb T^n} \psi \dd \mu -\hat I_{\psi} \right| \ \leq \ K_V  \left\| \psi  \right\|_\infty  \|\mathcal{F}+\nabla V \|_{\infty}\,.\]
\end{prop}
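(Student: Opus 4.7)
The starting point of the proof is to view both $\pi_\infty^{\mathcal F}$ and $\mu_A\varpropto e^{-\beta(V-A\circ\xi)}$ as invariant probabilities of two time-homogeneous overdamped Langevin diffusions on $\mathbb T^n$, with respective drifts $\widetilde{\mathcal F}(x,y):=\mathcal F(x,y)+\nabla H_\infty^{\mathcal F}(x)\cdot e_{1}$ and $\widetilde{\mathcal F}_0(x,y):=-\nabla V(x,y)+\nabla A(x)\cdot e_{1}$ (where $e_{1}$ denotes the embedding of a vector of $\IR^m$ into $\IR^n$). The drift difference reads
$$\widetilde{\mathcal F}-\widetilde{\mathcal F}_0\ =\ (\mathcal F+\nabla V)\,+\,(\nabla H_\infty^{\mathcal F}-\nabla A)\cdot e_{1}\,.$$
The first step is then to apply the quantitative stability bounds of invariant measures of homogeneous diffusions derived in Section~\ref{subsec:perturb-diffusion}, together with the $L^\infty$ and log-Sobolev-type estimates already yielded by Theorem~\ref{ThmExistence}, to control $\pi_\infty^{\mathcal F}-\mu_A$ in some norm $\|\cdot\|_\star$ by a constant $\kappa_V$ (depending on $V$ only) times $\|\mathcal F+\nabla V\|_\infty+\|\nabla H_\infty^{\mathcal F}-\nabla A\|_\star$.

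Next, I would exploit the flat histogram property $\pi_\infty^{\mathcal F,\xi}\equiv 1$ established in Proposition~\ref{PropHistoPlat} together with the identity $\mu_A^\xi\equiv 1$ to decompose the averaged mean force difference as
$$G_\infty^{\mathcal F}(x)-\nabla A(x)\ =\ -\int_{\IT^{n-m}}\po\mathcal F_1+\partial_x V\pf(x,y)\pi_\infty^{\mathcal F}(x,y)\dd y+\int_{\IT^{n-m}}\partial_x V(x,y)\po\mu_A-\pi_\infty^{\mathcal F}\pf(x,y)\dd y,$$
so that pointwise $|G_\infty^{\mathcal F}(x)-\nabla A(x)|\leq \|\mathcal F+\nabla V\|_\infty+\|\nabla V\|_\infty\int_{\IT^{n-m}}|\mu_A-\pi_\infty^{\mathcal F}|(x,y)\dd y$. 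Because $\nabla H_\infty^{\mathcal F}-\nabla A=\mathsf P_{L^2(\lambda)}(G_\infty^{\mathcal F}-\nabla A)$ and the Helmholtz projection is bounded on $L^p(\IT^m)$ for every $1<p<\infty$ by Calder\'on--Zygmund theory (the case $p=1$ following on the compact torus by Hölder), one gets $\|\nabla H_\infty^{\mathcal F}-\nabla A\|_{L^p}\leq C_p\po \|\mathcal F+\nabla V\|_\infty+\|\nabla V\|_\infty\|\mu_A-\pi_\infty^{\mathcal F}\|_\star\pf$. Injecting this back into the stability estimate of the previous paragraph yields a self-consistent inequality from which one extracts the target bound $\|\nabla H_\infty^{\mathcal F}-\nabla A\|_{L^p}\leq K_VK_p\|\mathcal F+\nabla V\|_\infty$, either by bootstrap starting from the uniform a priori bound of Theorem~\ref{ThmExistence}, or by linearizing the PABF fixed-point map around its reference equilibrium $\mu_A$ and exploiting invertibility of that linearization as a pivot.

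For the second assertion, the key observation is that when $\mathcal F=-\nabla V$ one has $H_\infty^{\mathcal F}=A$ and $\pi_\infty^{\mathcal F}=\mu_A=Z_{\mu_A}^{-1}e^{-\beta(V-A\circ\xi)}$, so that $e^{-\beta H_\infty^{\mathcal F}(x)}\pi_\infty^{\mathcal F}(x,y)\varpropto e^{-\beta V(x,y)}$ and $\hat I_\psi=\int \psi\dd\mu$ holds \emph{exactly}. For general $\mathcal F$, writing $\hat I_\psi=\int\psi\dd\tilde\mu^{\mathcal F}$ with $\tilde\mu^{\mathcal F}\varpropto e^{-\beta H_\infty^{\mathcal F}}\pi_\infty^{\mathcal F}$ and combining the stability of $\pi_\infty^{\mathcal F}$ around $\mu_A$ with Sobolev embedding on $\IT^m$ (taking $p>m$ in the first assertion) to turn the $L^p$ bound on $\nabla H_\infty^{\mathcal F}-\nabla A$ into an $L^\infty$ bound on $H_\infty^{\mathcal F}-A$ after a suitable choice of additive constant, one controls $\|\tilde\mu^{\mathcal F}-\mu\|_{TV}$ linearly in $\|\mathcal F+\nabla V\|_\infty$. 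The bound on $|\int\psi\dd\mu-\hat I_\psi|\leq\|\psi\|_\infty\|\tilde\mu^{\mathcal F}-\mu\|_{TV}$ then follows, with a constant depending only on $V$.

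The main obstacle is the closure of the self-consistent estimate in the first step: the drift perturbation driving $\pi_\infty^{\mathcal F}-\mu_A$ itself involves $\nabla H_\infty^{\mathcal F}-\nabla A$, which in turn is computed from $\pi_\infty^{\mathcal F}$ through the averaged mean force and the Helmholtz projection. Because the assumption $\|\mathcal F+\nabla V\|_\infty\leq 1$ is not a smallness assumption, a naive Banach-type bootstrap does not directly close; one must either rely on quantitative invertibility of the linearized PABF operator at $\mu_A$, or carefully match the norms used for the Calderón--Zygmund estimate of $\mathsf P_{L^2(\lambda)}$ with those used in the invariant-measure stability bound of Section~\ref{subsec:perturb-diffusion}, so that the factorized constant $K_VK_p$ genuinely separates the dependence on $V$ from the dependence on $p$.
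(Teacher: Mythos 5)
There is a genuine gap, and you have in fact put your finger on it yourself: your first step compares $\pi_\infty^{\mathcal F}$ with $\mu_A$, whose drifts differ by $(\mathcal F+\nabla V)+(\nabla H_\infty^{\mathcal F}-\nabla A)\cdot e_1$, so the stability estimate for invariant measures feeds the unknown $\nabla H_\infty^{\mathcal F}-\nabla A$ back into itself. Tracking the constants, the resulting self-consistent inequality reads roughly $\|\nabla H_\infty^{\mathcal F}-\nabla A\|\leq c^*\|\mathcal F+\nabla V\|_\infty+ c\,\|\nabla V\|_\infty\po\|\mathcal F+\nabla V\|_\infty+\|\nabla H_\infty^{\mathcal F}-\nabla A\|\pf$, which only closes when $c\,\|\nabla V\|_\infty<1$; since neither $\|\nabla V\|_\infty$ nor $\|\mathcal F+\nabla V\|_\infty$ is assumed small, neither of the two escape routes you mention (a Banach bootstrap, or invertibility of the linearized PABF map, which is nowhere established in the paper and would be a substantial result in itself) is available. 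As written, the first — and hence also the second — assertion is not proved.

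The paper sidesteps the loop entirely by choosing a different comparison measure: not $\mu_A$, but the auxiliary Gibbs measure $\nu_{\mathcal F}\varpropto e^{-V+H_{\mathcal F}\circ\xi}$, i.e.\ the invariant measure of the diffusion with the \emph{conservative} force $-\nabla V$ but the \emph{same} bias $\nabla(H_{\mathcal F}\circ\xi)$ as $\pi_\infty^{\mathcal F}$. The bias terms then cancel in the drift difference, which is exactly $\mathcal F+\nabla V$, so Proposition~\ref{Prop:driftLp} gives $\|\pi_\infty^{\mathcal F}-\nu_{\mathcal F}\|_{L^2}\leq C\|\mathcal F+\nabla V\|_\infty$ with no self-reference (the constant is uniform because both drifts are bounded in $L^p$ uniformly over the admissible class, via the $L^p$-boundedness of the Helmholtz projection). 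Since $\nu_{\mathcal F}$ has the same conditional laws given $x$ as $\mu$, one has $\nabla A(x)=\int \nabla_x V(x,y)\,\nu_{\mathcal F}(x,y)/\nu_{\mathcal F}^\xi(x)\,\dd y$, and the comparison of $G_\infty^{\mathcal F}$ with $\nabla A$ becomes a direct two-term estimate, closed by projecting with $\mathsf{P}_{L^2(\lambda)}$. Your treatment of the second assertion is in the right spirit (normalize $H$, upgrade the $L^p$ bound on $\nabla A-\nabla H$ to an $L^\infty$ bound on $A-H$ via Poincar\'e--Wirtinger and Morrey, then control the ratio of partition functions), but in the paper it is again run through $\nu_{\mathcal F}$ rather than $\mu_A$, using $\int\psi\,\dd\mu=(Z_\nu/Z_\mu)\int\psi e^{-H\circ\xi}\dd\nu$ together with $\|\nu_{\mathcal F}-\pi_\infty^{\mathcal F}\|_{L^2}$; without the first assertion and the $\nu_{\mathcal F}$ device, this part does not stand on its own either.
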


The first point of Proposition~\ref{PropPerturb} states that, if the error on the forces $\nabla V$ is small, then the bias of the free energy estimation is small.  The second point states that similarly, the bias on the computations of averages with respect to $\mu$ is small if the error on the forces is small. Indeed,  in practice, in order to compute averages with respect to the initial target law $\mu$  from the biased trajectory, two strategies are available: either standard importance sampling re-weighting, or estimation of the conditional expectations given $\xi(X,Y)=x$ and then average with respect to $\exp(-H_{\mathcal F}(x))$. In both cases, if $-\nabla V$ is replaced by $\mathcal F$ due to some numerical errors and the process converges in large time towards an equilibrium $\pi_\infty^{\mathcal F}$, then a quantity of the form $\int_{\mathbb T^n} \psi \dd \mu$ is approximated by an estimator that converges in large time towards the quantity $\hat I_\psi$ defined in Proposition~\ref{PropPerturb}.\\

Finally, we turn to the long-time convergence of the density $\pi_t$ on the whole space. 
 The first theorem concerns the classical, conservative case, whereas the second concerns the general case, where the force $\mathcal{F}$ can be non conservative. These will respectively be proved in sections~\ref{subsec:thm-conservative} and \ref{subsec:thm-non-conservative}.

\begin{theo}\label{ThmConservative}
Let us consider $(\pi_{t}, B_t)$ solution of ~\eqref{EqFokkerPlanckABF} for either the ABF or  PABF algorithm, under Assumption~\ref{AssuCINI} and Assumption~\ref{HypoGeneral}. Let us suppose moreover that $\mathcal F = -\nabla V$, with $V\in\mathcal C^2(\mathbb T^n)$. Then, there exists $K> 0$ such that, for all $\varepsilon>0$ and for all $t \ge 0$:
\begin{equation*}
\mathcal{H}\left( \pi_{t} |\mu_A \right) \leq K\left(1+\frac{1}{\varepsilon^{2}} \right) e^{-\left(\Lambda-\varepsilon\right) t},
\end{equation*}
with $\mu_{A}$ being given by \eqref{eq:mu-FE}, $\Lambda=\left(8\pi^{2} \wedge 2\rho\right)\beta^{-1} $ in the ABF case, $\Lambda = \left(4\pi^{2} \wedge  2\rho\right)\beta^{-1}$ in the PABF case, and $\rho$ is the log-Sobolev constant of the conditional density $y\mapsto \mu_{A,x}(y):=\mu_{A}(x,y)/\mu_{A}^{\xi}(x)$. Furthermore, \eqref{EqFokkerPlanckABF} consequently admits a unique stationary state: using the notations of Theorem~\ref{ThmExistence}, $\left( \pi_{\infty}^{-\nabla V}, B_{\infty}^{-\nabla V}\right) = \left(\mu_A, \nabla A\right)$.
\end{theo}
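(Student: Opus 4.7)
The plan is to follow the classical two-scale relative entropy approach: decompose $\mathcal H(\pi_t|\mu_A)$ into a macroscopic marginal part, controlled by Proposition~\ref{PropHistoPlat}, and a microscopic conditional part, controlled by the log-Sobolev inequality on the conditionals $\mu_{A,x}$, then close the argument by a Gronwall-type inequality. Since $\mu_A\propto e^{-\beta(V-A\circ\xi)}$ is designed so that its marginal $\mu_A^\xi$ equals the uniform Lebesgue measure $\lambda$ on $\IT^m$, the chain rule for relative entropy yields
\[\mathcal H(\pi_t|\mu_A) \ = \ \mathcal H(\pi_t^\xi|\lambda) + E_t^m, \qquad E_t^m \ := \ \int_{\IT^m}\pi_t^\xi(x)\,\mathcal H(\pi_{t,x}|\mu_{A,x})\,\dd x,\]
where $\pi_{t,x}(y)=\pi_t(x,y)/\pi_t^\xi(x)$ and $\mu_{A,x}(y)=\mu_A(x,y)/\mu_A^\xi(x)$. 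Proposition~\ref{PropHistoPlat} handles the first term (rate $8\pi^2\beta^{-1}$), so only the full entropy remains to be controlled.

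Differentiating $\mathcal H(\pi_t|\mu_A)$ along \eqref{EqFokkerPlanckABF} and using $\nabla\ln\mu_A=-\beta\nabla V+\beta(\nabla A,0)$, the Fokker-Planck equation rewrites as $\partial_t\pi_t=\beta^{-1}\nabla\cdot(\pi_t\nabla\ln(\pi_t/\mu_A))-\nabla_x\cdot(\pi_t(B_t-\nabla A))$, whence after integration by parts on the torus
\[\frac{\dd}{\dd t}\mathcal H(\pi_t|\mu_A) \ = \ -\beta^{-1}I(\pi_t|\mu_A) + \int_{\IT^n}\pi_t\,(B_t-\nabla A)\cdot\nabla_x\ln\frac{\pi_t}{\mu_A}\,\dd x\,\dd y.\]
The cross term is treated by splitting $\nabla_x\ln(\pi_t/\mu_A)=\nabla_x\ln\pi_t^\xi+\nabla_x\ln(\pi_{t,x}/\mu_{A,x})$ and using the pointwise bound $|B_t(x)-\nabla A(x)|^2 \leq C_V\,\mathcal H(\pi_{t,x}|\mu_{A,x})$, obtained in the ABF case from the identity $G_t(x)-\nabla A(x)=\int\nabla_x V(x,y)(\pi_{t,x}-\mu_{A,x})(y)\,\dd y$, the Csisz\'ar-Kullback inequality \eqref{CK} and $\|\nabla V\|_\infty<+\infty$ on the compact torus; in the PABF case the same bound follows from the $L^2(\lambda)$-contractivity of the Helmholtz projection applied to the same difference. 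A Young's inequality with free parameter $\delta>0$ then permits to absorb a term $O(\delta)\,I(\pi_t|\mu_A)$ into the entropy dissipation.

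Applying next LSI$(\rho)$ on each $\mu_{A,x}$, namely $I(\pi_{t,x}|\mu_{A,x})\geq 2\rho\,\mathcal H(\pi_{t,x}|\mu_{A,x})$, together with the LSI of constant $4\pi^2$ on $(\IT^m,\lambda)$ for the marginal part, one ends up with a differential inequality of the form
\[\frac{\dd}{\dd t}\mathcal H(\pi_t|\mu_A) \ \leq \ -(\Lambda-C\delta)\,\mathcal H(\pi_t|\mu_A) + \frac{C_V}{\delta}\,\mathcal H(\pi_t^\xi|\lambda),\]
with $\Lambda=(8\pi^2\wedge 2\rho)\beta^{-1}$ in the ABF case. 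In the PABF case, the extra divergence-free drift $\nabla\cdot((B_t-G_t)\pi_t^\xi)$ appearing in \eqref{FP-Pi_xi} contributes an additional coupling between macro and micro scales which, after a further Young's inequality, halves the effective macroscopic rate and yields $\Lambda=(4\pi^2\wedge 2\rho)\beta^{-1}$. Combined with the exponential decay of $\mathcal H(\pi_t^\xi|\lambda)$ provided by Proposition~\ref{PropHistoPlat}, the Duhamel formula with $\delta=\varepsilon/(2C)$ delivers the announced estimate, the $K(1+1/\varepsilon^2)$ prefactor arising from the $1/\delta^2$ dependence of the constants. Uniqueness of the stationary state is then immediate: if $(\pi_\infty^{-\nabla V},B_\infty^{-\nabla V})$ is a stationary pair of \eqref{EqFokkerPlanckABF}, then taking $\pi_0=\pi_\infty^{-\nabla V}$ makes $\mathcal H(\pi_t|\mu_A)\equiv\mathcal H(\pi_\infty^{-\nabla V}|\mu_A)$ constant in~$t$, which by the decay estimate forces $\pi_\infty^{-\nabla V}=\mu_A$, hence $B_\infty^{-\nabla V}=\nabla A$.

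The main technical obstacle is the careful handling of the cross term in the PABF case: the non-locality of the Helmholtz projection makes the pointwise estimate on $|B_t-\nabla A|^2$ more delicate than for ABF, and the non-vanishing term $\nabla\cdot((B_t-G_t)\pi_t^\xi)$ in the marginal Fokker-Planck equation must be balanced precisely against the macroscopic entropy dissipation in order to recover the announced constant $4\pi^2$ rather than a sub-optimal one.
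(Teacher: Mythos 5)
Your overall architecture matches the paper's: decompose the entropy into macroscopic and microscopic parts, differentiate the total entropy to get $-\beta^{-1}I(\pi_t|\mu_A)$ plus a cross term involving $B_t-\nabla A$, bound that difference by $\sqrt{e_m(t,x)}$ (your Csisz\`ar--Kullback route with $\|\nabla V\|_\infty$ is a legitimate alternative to the paper's Talagrand-based Lemma~\ref{lem:BoundBias}), apply the conditional LSI, and close by Gronwall/Duhamel. However, there are three genuine gaps. First, and most importantly, your treatment of the PABF case does not work: the Helmholtz projection $\mathsf{P}_{L^2(\lambda)}$ is an orthogonal projection in $L^2(\lambda)$ only, so it gives no \emph{pointwise} bound on $|\nabla H_t(x)-\nabla A(x)|$, and it is not even a contraction for the weighted norm $L^2(\pi_t^\xi)$ that actually appears when you Cauchy--Schwarz the cross term against $\pi_t$. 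This mismatch between $\lambda$ and $\pi_t^\xi$ is precisely the difficulty that makes the PABF case new; the paper resolves it by introducing the auxiliary projection $\nabla\tilde H_t=\mathsf{P}_{L^2(\pi_t^\xi)}(G_t)$, showing one piece of the resulting error term $J_t$ is nonpositive (via \cite[Lemma 6]{Alrachid}), and controlling $\int|\nabla H_t-\nabla\tilde H_t|^2\pi_t^\xi\lesssim e^{-4\pi^2 t}$ through the $L^\infty$ convergence of $\pi_t^\xi$ (Proposition~\ref{PropHistoUniform}). Relatedly, your explanation of the PABF rate is incorrect: the divergence-free drift in \eqref{FP-Pi_xi} does \emph{not} halve the macroscopic entropy decay (Proposition~\ref{PropHistoPlat} gives rate $8\pi^2$ in both cases); the $4\pi^2$ comes from the $e^{-4\pi^2 t}$ decay of $\|\pi_t^\xi-1\|_\infty$ entering the bound on $J_t$.

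Second, your claimed differential inequality with source term $\frac{C_V}{\delta}\mathcal H(\pi_t^\xi|\lambda)$ is not what the computation yields. The marginal part of the cross term produces $\sqrt{I_M(t)}$, and after Young's inequality the source is the macroscopic Fisher information $I_M(t)$, which cannot be converted into $E_M(t)$ (there is no reverse log-Sobolev inequality), and which — as the paper explicitly remarks — is \emph{not} known to decay exponentially here, unlike in \cite{LRS07} or in the $L^2(\pi_t^\xi)$-projected variant of \cite{Alrachid}. The Duhamel integral $\int_0^t I_M(s)e^{-\kappa(t-s)}\dd s$ must therefore be handled by an integration by parts using only $\int_t^\infty I_M(s)\dd s\leq E_M(t)\leq E_M(0)e^{-8\pi^2 t}$. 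Finally, to obtain the sharp rate $2\rho$ on the microscopic part without a smallness assumption, you need the exact identity of Lemma~\ref{lem:ExpressionGt-B} together with a Jensen/Cauchy--Schwarz cancellation that eliminates the full $x$-dissipation and leaves only $-\int|\nabla_y\ln(\pi_t/\mu_A)|^2\pi_t$; absorbing an $O(\delta)I(\pi_t|\mu_A)$ term as you propose would instead require a global LSI for $\mu_A$ and lead to the weaker, conditional rate of Theorem~\ref{ThmCVtempslong}.
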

 
This extends \cite[Theorem 1]{LRS07}, which is restricted to the ABF algorithm with $m=1$. Besides, for the PABF algorithm, \cite[Theorem 1]{Alrachid} is a similar convergence result but for a variant of the algorithm where the classical Helmholtz projection in $L^{2}(\lambda)$ is replaced by the Helmholtz projection in the weighted space  $L^2(\pi_t^\xi)$. This variant is motivated in \cite{Alrachid} by some cancellations in the computations of the proofs.
 Nevertheless, as already noted in \cite{Alrachid}, the classical Helmholtz projection is used in practice. Theorem~\ref{ThmConservative} in the PABF case is thus a new result which fills a gap between the existing theoretical convergence results and the practical algorithm.
 
\begin{rem}
For $t>0$, applying Theorem~\ref{ThmConservative} with $\varepsilon = 1/t$ yields
$$\mathcal{H}\left( \pi_{t} |\mu_A \right) \leq Ke^{1}(1+t^{2}) e^{-\Lambda t}.$$
\end{rem}
 
The next results address the general --possibly non-conservative-- case, and as such are new.

\begin{theo}\label{ThmCVtempslong}
Let us consider $(\pi_{t}, B_t)$ solution of ~\eqref{EqFokkerPlanckABF} for either the ABF or  PABF algorithm, under Assumption~\ref{AssuCINI} and Assumption~\ref{HypoGeneral}. Let $\pi_{\infty}^{\mathcal{F}}$, $R,\rho$ be a stationary measure for ~\eqref{EqFokkerPlanckABF} and the two associated constants, as introduced in Theorem~\ref{ThmExistence}. Suppose moreover that $ M \beta<2\rho $. Then there exists $K\ge 0$ such that, for all $t\ge 0$:
\begin{equation*}
 \mathcal H \left(\pi_t | \pi_\infty^{\mathcal F} \right)\leq K\dst{e^{-\Lambda  t}},
 \end{equation*}
 with $\Lambda = 2 R (1-\frac{M\beta}{2\rho})\beta^{-1}$. As a consequence, the dynamics~\eqref{EqFokkerPlanckABF} admits a unique stationary state.
\end{theo}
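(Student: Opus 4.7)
The strategy is a direct entropy-dissipation argument against the stationary measure $\pi_\infty^{\mathcal F}$ given by Theorem~\ref{ThmExistence}, using the global log-Sobolev inequality (constant $R$) as the main source of dissipation and absorbing the residual $B_t - B_\infty^{\mathcal F}$ through the conditional LSI (constant $\rho$) together with the Lipschitz regularity of $\mathcal F_1$.

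\textbf{Step 1 (Entropy dissipation identity).} I first differentiate $t\mapsto\mathcal H(\pi_t|\pi_\infty^{\mathcal F})$ along~\eqref{EqFokkerPlanckABF}. Splitting the drift as $\mathcal F + (B_t,0) = [\mathcal F + (B_\infty^{\mathcal F},0)] + (B_t-B_\infty^{\mathcal F},0)$, the first piece admits $\pi_\infty^{\mathcal F}$ as a (generally non-reversible) invariant measure, so the ``swirl'' field $a := \beta^{-1}\nabla\log\pi_\infty^{\mathcal F} - \mathcal F - (B_\infty^{\mathcal F},0)$ satisfies $\nabla\cdot(\pi_\infty^{\mathcal F}\,a)=0$ on the torus and drops out after integration by parts (since $\int \pi_\infty^{\mathcal F} \nabla u \cdot a = 0$ with $u=\pi_t/\pi_\infty^{\mathcal F}$). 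This yields
\[
\frac{d}{dt}\mathcal H(\pi_t|\pi_\infty^{\mathcal F}) \ =\ -\beta^{-1}I(\pi_t|\pi_\infty^{\mathcal F}) \ +\ \int_{\mathbb T^n}\pi_t\,(B_t - B_\infty^{\mathcal F})\cdot\nabla_x\log\!\left(\frac{\pi_t}{\pi_\infty^{\mathcal F}}\right)dz.
\]

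\textbf{Step 2 (Control of the perturbation).} Applying Young's inequality to the remainder gives, for any $\eta > 0$, a bound of the form $\tfrac{\eta}{2}I(\pi_t|\pi_\infty^{\mathcal F}) + \tfrac{1}{2\eta}\int_{\mathbb T^m}|B_t-B_\infty^{\mathcal F}|^2 \pi_t^\xi\,dx$. The crux is the fiberwise estimate that controls $\int_{\mathbb T^m}|B_t - B_\infty^{\mathcal F}|^2 \pi_t^\xi \,dx$ by a multiple of $\tfrac{M^2}{\rho^2}\, I(\pi_t|\pi_\infty^{\mathcal F})$. Indeed, in the ABF case one has $B_t(x) - B_\infty^{\mathcal F}(x) = -\int_{\mathbb T^{n-m}}\mathcal F_1(x,y)[\pi_{t,x}(y)-\pi_{\infty,x}^{\mathcal F}(y)]\,dy$, and the Lipschitz bound for $\mathcal F_1$ (Assumption~\ref{HypoGeneral}) combined with Kantorovich--Rubinstein duality gives $|G_t(x)-G_\infty^{\mathcal F}(x)|\leq M\, W_2(\pi_{t,x},\pi_{\infty,x}^{\mathcal F})$; the Talagrand $T_2(\rho)$ inequality implied by the conditional LSI of Theorem~\ref{ThmExistence}(ii) then upgrades this to a bound by the conditional Fisher information $I(\pi_{t,x}|\pi_{\infty,x}^{\mathcal F})$, and averaging against $\pi_t^\xi$ yields a bound by the $y$-component $I_y(\pi_t|\pi_\infty^{\mathcal F}) = \int_{\mathbb T^m}\pi_t^\xi\,I(\pi_{t,x}|\pi_{\infty,x}^{\mathcal F})\,dx \leq I(\pi_t|\pi_\infty^{\mathcal F})$. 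For PABF, the $L^2(\lambda)$-non-expansiveness of the Helmholtz projection reduces the task to the same estimate, with the discrepancy between $\lambda$ and $\pi_t^\xi$ handled via Proposition~\ref{PropHistoUniform} (or the pointwise lower bound of Remark~\ref{Rem-pi-xi}).

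\textbf{Step 3 (Conclusion).} Injecting the global LSI $I(\pi_t|\pi_\infty^{\mathcal F})\geq 2R\,\mathcal H(\pi_t|\pi_\infty^{\mathcal F})$ from Theorem~\ref{ThmExistence}(i) into the bound obtained from Steps~1--2 and optimizing over $\eta$ produces
\[
\frac{d}{dt}\mathcal H(\pi_t|\pi_\infty^{\mathcal F}) \ \leq\ -\Lambda\, \mathcal H(\pi_t|\pi_\infty^{\mathcal F}), \qquad \Lambda = 2R\!\left(1 - \tfrac{M\beta}{2\rho}\right)\!\beta^{-1},
\]
which is positive precisely under the hypothesis $M\beta < 2\rho$. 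Gronwall's lemma yields the announced exponential decay, and uniqueness of the stationary state is an immediate consequence of the convergence (any two stationary states would yield $\mathcal H = 0$). The main obstacle is the sharp fiberwise bound of Step~2: getting the right interplay between the Lipschitz constant $M$, the Talagrand constant $\rho$, and the Fisher information so as to produce the precise condition $M\beta < 2\rho$, and in the PABF case handling the Helmholtz projection against $\lambda$ rather than $\pi_t^\xi$ while preserving the same quantitative control.
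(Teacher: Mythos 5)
Your proposal is correct and follows essentially the same route as the paper: the entropy dissipation identity of Lemma~\ref{lem:dEtot}, the fiberwise bound $|G_t-G_\infty|\leq M\sqrt{(2/\rho)e_m(t,x)}$ via Lipschitz continuity, Kantorovich duality and the Talagrand inequality (Lemma~\ref{lem:BoundBias}), the conditional LSI to pass to the $y$-part of the Fisher information, the Helmholtz contraction plus Proposition~\ref{PropHistoUniform} for PABF, and finally the global LSI and Gronwall. One point of care when writing it out: in Step 2 the Young/Cauchy--Schwarz term must be charged only to the $x$-component $I_x$ of the Fisher information (the cross term involves $\nabla_x\ln(\pi_t/\pi_\infty)$ only) while the bias term is charged to $I_y$; using the full $I$ on both sides would yield the weaker condition $M\beta<\rho$, and in the PABF case the prefactor $\|\pi_t^\xi\|_\infty\|1/\pi_t^\xi\|_\infty\to1$ forces, as in the paper, a separate crude bound on a finite initial time interval.
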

Eventually, one has the following result, which will be proved in Section~\ref{subsec:corollary-cv}.
\begin{cor}\label{CorCvBias}
Under the settings of either Theorem~\ref{ThmConservative} or \ref{ThmCVtempslong}, there exists a unique stationary state $\po \pi_\infty^{\mathcal{F}}, B_\infty^{\mathcal{F}} \pf$ for the dynamics \eqref{EqFokkerPlanckABF}. Furthermore,
 there exists $K\geq 0$ such that for all  $t \ge 0$,
\begin{equation*}
\int_{\mathbb{T}^{m}} |B_t-B_\infty^{\mathcal{F}}|^2 \dd x \le  Ke^{-\Lambda  t}, 
\end{equation*}
where $\Lambda $  is given by either Theorem~\ref{ThmConservative} (where $\mathcal{F}=-\nabla V$) or \ref{ThmCVtempslong} (where $\mathcal{F}$ is general).
\end{cor}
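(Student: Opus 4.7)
The uniqueness of the stationary state $(\pi_\infty^{\mathcal F},B_\infty^{\mathcal F})$ is already contained in the statements of Theorem~\ref{ThmConservative} and Theorem~\ref{ThmCVtempslong}, so only the $L^2$-convergence of the bias remains to be established. The strategy is to control $\|B_t-B_\infty^{\mathcal F}\|_{L^2(\lambda)}^2$ by a multiple of $\mathcal H(\pi_t|\pi_\infty^{\mathcal F})$, which decays at rate $\Lambda$ by the relevant theorem. In the PABF case, one writes $B_t-B_\infty^{\mathcal F}=\mathsf{P}_{L^2(\lambda)}(G_t-G_\infty^{\mathcal F})$ and uses that the Helmholtz projection is an orthogonal projection on $L^2(\lambda)$, hence a contraction; it thus suffices to bound $\|G_t-G_\infty^{\mathcal F}\|_{L^2(\lambda)}^2$. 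The ABF case, where $B_t=G_t$, is handled by the very same estimate.

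Since Proposition~\ref{PropHistoPlat} yields $\pi_\infty^{\mathcal F,\xi}\equiv 1$, the conditional density $\tilde\pi_{\infty,x}^{\mathcal F}(y):=\pi_\infty^{\mathcal F}(x,y)/\pi_\infty^{\mathcal F,\xi}(x)$ equals $\pi_\infty^{\mathcal F}(x,y)$, and
\[G_t(x)-G_\infty^{\mathcal F}(x)\ =\ -\int_{\mathbb T^{n-m}}\mathcal F_1(x,y)\bigl(\tilde\pi_{t,x}(y)-\tilde\pi_{\infty,x}^{\mathcal F}(y)\bigr)\dd y,\]
where $\tilde\pi_{t,x}(y):=\pi_t(x,y)/\pi_t^\xi(x)$. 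Since $\mathcal F_1$ is continuous on the compact torus and thus bounded by Assumption~\ref{HypoGeneral}, this gives $|G_t(x)-G_\infty^{\mathcal F}(x)|\leq \|\mathcal F_1\|_\infty\|\tilde\pi_{t,x}-\tilde\pi_{\infty,x}^{\mathcal F}\|_{L^1}$, and the Csisz\'ar--Kullback inequality~\eqref{CK} applied pointwise in $x$ yields $|G_t(x)-G_\infty^{\mathcal F}(x)|^2\leq 2\|\mathcal F_1\|_\infty^2\,\mathcal H(\tilde\pi_{t,x}|\tilde\pi_{\infty,x}^{\mathcal F})$. Integrating in $x$ and invoking the chain rule
\[\mathcal H(\pi_t|\pi_\infty^{\mathcal F})\ =\ \mathcal H(\pi_t^\xi|1)+\int_{\mathbb T^m}\pi_t^\xi(x)\,\mathcal H\bigl(\tilde\pi_{t,x}\bigl|\tilde\pi_{\infty,x}^{\mathcal F}\bigr)\dd x\]
together with a uniform lower bound $\pi_t^\xi(x)\geq c>0$ (to be established in the last paragraph), one obtains
\[\|G_t-G_\infty^{\mathcal F}\|_{L^2(\lambda)}^2\ \leq\ \frac{2\|\mathcal F_1\|_\infty^2}{c}\,\mathcal H(\pi_t|\pi_\infty^{\mathcal F}).\]

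The main technical point is securing the lower bound $\pi_t^\xi\geq c>0$ uniformly in time. Under Assumption~\ref{AssuCINI}, Remark~\ref{Rem-pi-xi} propagates the initial pointwise lower bound $m_0^\xi:=\min\pi_0^\xi>0$ to all times $t\geq 0$, so one may simply take $c=m_0^\xi$; alternatively, Proposition~\ref{PropHistoUniform} gives $\pi_t^\xi\geq 1/2$ for $t\geq t_0$ with $t_0$ depending on $\|\pi_0^\xi-1\|_2$, and on the bounded initial window $[0,t_0]$ the quantity $\|B_t-B_\infty^{\mathcal F}\|_{L^2(\lambda)}^2$ is controlled by $4\|\mathcal F_1\|_\infty^2$ (since $|G_t|\leq \|\mathcal F_1\|_\infty$ as a conditional expectation, and $\mathsf{P}_{L^2(\lambda)}$ is a contraction), hence can be absorbed into a sufficiently large constant $K$. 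Combining everything with the exponential decay of $\mathcal H(\pi_t|\pi_\infty^{\mathcal F})$ supplied by Theorem~\ref{ThmConservative} or Theorem~\ref{ThmCVtempslong} concludes the proof. The most delicate ingredient is the interplay between the entropy chain rule and the flat-histogram lower bound on $\pi_t^\xi$ established in Section~\ref{sec:law-pi-xi}, without which the conditional-entropy estimate could not be cleanly transferred to a control of the bias.
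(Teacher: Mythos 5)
Your proof is correct, and its overall skeleton matches the paper's: reduce to bounding $\|G_t-G_\infty^{\mathcal F}\|_{L^2(\lambda)}^2$ by a constant times the microscopic entropy $E_m(t)\leq \mathcal H(\pi_t|\pi_\infty^{\mathcal F})$, transfer the $\pi_t^\xi$-weighted estimate to the Lebesgue measure via a lower bound on $\pi_t^\xi$, absorb a bounded initial time window into the constant $K$ using $|G_t|\leq\|\mathcal F_1\|_\infty$, and handle PABF by the contraction property of $\mathsf{P}_{L^2(\lambda)}$. The one genuine difference is the mechanism for the pointwise bound $|G_t(x)-G_\infty^{\mathcal F}(x)|^2\lesssim e_m(t,x)$: the paper reuses Lemma~\ref{lem:BoundBias}, i.e.\ a coupling argument plus the Talagrand inequality inherited from the log-Sobolev inequality of the conditional measures $\pi_{\infty,x}^{\mathcal F}$, yielding the constant $2M^2/\rho$; you instead apply the Csisz\`ar--Kullback inequality \eqref{CK} conditionally in $x$, yielding $2\|\mathcal F_1\|_\infty^2$. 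Your route is more elementary for this particular step (it needs only boundedness of $\mathcal F_1$, not the Lipschitz constant $M$ nor the conditional LSI), whereas the paper's route produces a constant that becomes small when the orthogonal log-Sobolev constant $\rho$ is large, i.e.\ when the reaction coordinate is well chosen; both give the same exponential rate $\Lambda$. A second, minor divergence: for the lower bound on $\pi_t^\xi$ the paper uses $\|1/\pi_t^\xi\|_\infty\leq 1+Ce^{-4\pi^2 t}$ from Proposition~\ref{PropHistoUniform} for $t\geq 1$, while you invoke the propagated bound $\pi_t^\xi\geq m_0^\xi$ of Remark~\ref{Rem-pi-xi} (or, alternatively, the same Proposition~\ref{PropHistoUniform}); both are legitimate under Assumption~\ref{AssuCINI}.
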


\begin{rem}\label{Rem-CvMeasure} 
A direct consequence of the Csiz\`ar-Kullback inequality \eqref{CK} combined with either Theorem~\ref{ThmConservative} or Theorem \ref{ThmCVtempslong} is that for all $t \ge 0$
$$ \|\pi_{t}-\pi_{\infty}^{\mathcal{F}}\|_{TV} \leq \sqrt{2K} e^{-\frac{1}{2}\Lambda t},$$
where $K,\Lambda\ge 0$  are given by either Theorem~\ref{ThmConservative} (where $\mathcal{F}=-\nabla V$) or \ref{ThmCVtempslong} (where $\mathcal{F}$ is general).
\end{rem}

Theorem ~\ref{ThmCVtempslong} shows the exponential convergence to a unique stationary state for the ABF and PABF algorithms even for non-conservative forces. Notice that the rate of convergence obtained in Theorem~\ref{ThmConservative} for conservative forces is better than the rate of convergence in Theorem~\ref{ThmCVtempslong}. It would be interesting to further investigate the sharpness of these rates. \\

The rest of this paper is devoted to the proofs of the results stated in this section. From now on, and without loss of generality, we will assume that $\beta =1$. Note that the assumption of Theorem~\ref{ThmCVtempslong} now becomes $M<2\rho$. An adequate change of variable to then deduce the results for $\beta \neq 1$ is: $\tilde{t}=\beta^{-1} t$, $\tilde{\mathcal{F}}(x,y)=\beta \mathcal{F}(x,y)$, $\tilde{W}^{1}(x)=\beta W^{1}(x)$, $\tilde{W}^{2}(y)=\beta W^{2}(y)$, and $\tilde{\pi}_{t}(x,y)=\pi_{t}(x,y),$ for all $t\geq 0$ and for all $(x,y)\in \mathbb{T}^{n}$. 

\section{Law of the transition coordinate}
\label{sec:law-pi-xi}After proving in Section~\ref{subsec:dem-prop-histo-plat} the long-time entropic convergence of the density $\pi_{t}^{\xi}$ towards the Lebesgue measure $\lambda$, we prove in Section~\ref{subsec:dem-prop-histo-uniform} its long-time $L^{\infty}$-convergence, by relying on a Nash inequality on the $n$-dimensional torus and on the proof of \cite[Theorem 6.3.1]{BakryGentilLedoux}.

\subsection{Proof of Proposition \ref{PropHistoPlat}}
\label{subsec:dem-prop-histo-plat}

\begin{proof}
One has:
$$\frac{\dd}{\dd t} \mathcal H(\pi_t^\xi|\lambda)=\frac{\dd}{\dd t} \int_{\mathbb{T}^{m}} \pi_t^\xi \ln \pi_t^\xi.$$
Considering $\mathcal{L}_t\mu =  \nabla\cdot(\nabla \mu - (B_t-G_{t})\mu)$:
$$\frac{\dd }{\dd t} \displaystyle{\int_{\mathbb{T}^{m}}\pi^{\xi}_{t}}=\displaystyle{\int_{\mathbb{T}^{m}} \mathcal{L}_{t}\pi^{\xi}_{t}}=0.$$
One gets, using integration by parts,
\begin{align}
\frac{\dd }{\dd t} \mathcal H(\pi_t^\xi|\lambda)& =  \int_{\mathbb{T}^{m}} \ln \pi_t^\xi \mathcal{L}_t\pi_t^\xi + \int_{\mathbb{T}^{m}} \mathcal{L}_t \pi_t^\xi  \nonumber \\
& =  -\int_{\mathbb{T}^{m}} \frac{|\nabla \pi_t^\xi|^2}{\pi_t^\xi} + \int_{\mathbb{T}^{m}} (B_t-G_{t})\nabla \pi_t^\xi   \nonumber \\
& =  -\int_{\mathbb{T}^{m}} \frac{|\nabla \pi_t^\xi|^2}{\pi_t^\xi} \quad \text{(since $\nabla \cdot (B_{t}-G_{t})=0$)} \nonumber \\
&=-\displaystyle{\int_{\mathbb{T}^{m}} |\nabla \ln\left(\frac{\pi_t^\xi}{\lambda}\right)|^2\, \pi_t^\xi } \nonumber \\
&= - I(\pi^{\xi}_{t}|\lambda).\label{dEM}
\end{align}
Since the Lebesgue measure $\lambda$ satisfies a log-Sobolev inequality of constant $4\pi^{2}$ \cite[Proposition 5.7.5(ii)]{BakryGentilLedoux}, we have:
$$
\partial_t \mathcal H(\pi_t^\xi|\lambda) \leq -2(4\pi^{2}) \mathcal{H}(\pi^{\xi}_{t}|\lambda),
$$
 which concludes the proof of Proposition~\ref{PropHistoPlat}, denoting by $\pi_{\infty}^{\xi} \equiv \lambda$ the long-time limit of $\pi_{t}^{\xi}$.
\end{proof}

\subsection{Proof of Proposition \ref{PropHistoUniform} }
\label{subsec:dem-prop-histo-uniform}

We first state a Nash inequality on the $n$-dimensional torus.
\begin{lem}\label{LemNashTorusn}
For all $n \in \IN^{*}$, there exists $a=a(n)>0$ such that for all function $u \in H^{1}(\IT^{n})$:
\begin{equation}
\|u\|_{2}^{2} \leq 2\|u\|_{1}^{2}+a\|\nabla u\|_{2}^{\frac{2n}{n+2}} \|u\|_{1}^{\frac{4}{n+2}}.\label{NashTorusn}
\end{equation}
\end{lem}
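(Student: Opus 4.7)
The plan is to establish this Nash-type inequality by a classical Fourier cutoff argument on $\IT^n$, splitting off the mean of $u$ so as to fall back on Poincaré's inequality in the regime where the gradient is small.

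First, I would expand $u\in H^1(\IT^n)$ in Fourier series, $u=\sum_{k\in\IZ^n}\hat u(k) e^{2i\pi k\cdot x}$, which gives via Parseval $\|u\|_2^2=\sum_k |\hat u(k)|^2$ and $\|\nabla u\|_2^2=4\pi^2\sum_k |k|^2|\hat u(k)|^2$. The trivial bound $|\hat u(k)|\leqslant \|u\|_1$ holds for every $k$. Setting $\bar u=\hat u(0)$ and $v:=u-\bar u$, one has $\bar u^2\leqslant \|u\|_1^2$, $\|v\|_1\leqslant 2\|u\|_1$, $\nabla v=\nabla u$, and $v$ has zero mean. It thus suffices to prove a Nash inequality of the form $\|v\|_2^2\leqslant A_n \|\nabla v\|_2^{2n/(n+2)}\|v\|_1^{4/(n+2)}$ for zero-mean $v$, and then reassemble.

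For such $v$, I would apply the Fourier cutoff trick: for any $R>0$, splitting the frequencies according to $|k|\leqslant R$ and $|k|>R$ and using $|\hat v(k)|\leqslant \|v\|_1$ gives
\[\|v\|_2^2\ \leqslant\ N(R)\|v\|_1^2 + \frac{1}{4\pi^2 R^2}\|\nabla v\|_2^2,\]
where $N(R)=\#\{k\in\IZ^n\setminus\{0\} :\ |k|\leqslant R\}\leqslant c_n R^n$ for $R\geqslant 1$ by a standard lattice-point count. Choosing $R^\star$ to balance the two contributions, namely $(R^\star)^{n+2}\propto \|\nabla v\|_2^2/\|v\|_1^2$, yields, when $R^\star\geqslant 1$, the desired Nash-type bound for $v$. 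In the complementary regime $R^\star<1$, equivalent to $\|\nabla v\|_2$ being bounded by a dimensional constant multiple of $\|v\|_1$, I would close the argument via the Poincaré inequality on $\IT^n$ (with optimal constant $4\pi^2$ on zero-mean functions), $\|v\|_2^2\leqslant (4\pi^2)^{-1}\|\nabla v\|_2^2$, splitting $\|\nabla v\|_2^2=\|\nabla v\|_2^{2n/(n+2)}\cdot \|\nabla v\|_2^{4/(n+2)}$ and controlling the last factor by $\|v\|_1^{4/(n+2)}$ in that regime.

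Reassembling, $\|u\|_2^2=\bar u^2+\|v\|_2^2\leqslant \|u\|_1^2+A_n (2\|u\|_1)^{4/(n+2)}\|\nabla u\|_2^{2n/(n+2)}$, which fits the claimed form with $a=2^{4/(n+2)}A_n$, the factor $2$ in front of $\|u\|_1^2$ providing comfortable slack over the $1\cdot\|u\|_1^2$ actually produced. The main obstacle I anticipate is precisely the case split at $R=1$: the discreteness of $\IZ^n$ invalidates the naive optimization in $R$ when $\|\nabla v\|_2$ is small compared with $\|v\|_1$, and some care is needed to invoke Poincaré there without losing the exponent $2n/(n+2)$ on $\|\nabla u\|_2$.
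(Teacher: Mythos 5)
Your argument is correct and follows essentially the same route as the paper's proof: a Fourier frequency cutoff with a lattice-point count, optimization of the cutoff radius, and a fallback to the Poincaré--Wirtinger inequality in the regime where $\|\nabla u\|_2$ is small compared with $\|u\|_1$. The only cosmetic difference is that you split off the mean at the outset (which in fact yields the coefficient $1$ rather than $2$ in front of $\|u\|_1^2$), whereas the paper keeps the zero mode in the low-frequency sum and only invokes the decomposition $u=\bar u+(u-\bar u)$ in the small-gradient case.
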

\begin{proof}
Let us recall that $\IT^{n}=\IR^{n}/\IZ^{n}$. 
 We consider $L^{2}(\mathbb{T}^{n})$ equipped with the inner product $\langle u,v\rangle :=\int_{\mathbb{T}^{n}} u(x)\bar{v}(x)\, \dd x $. The sequence $\{ e^{2\pi i k x}\}_{k \in \IZ^{n}}$ is an orthonormal basis of $L^{2}(\IT^{n})$. Now given a function $u \in L^{2}(\IT^{n})$ and its Fourier coefficients
$$c_{k}=\dst{\int_{\IT^{n}} u(x) e^{-2\pi ikx} \, \dd x}, \qquad \forall k \in \IZ^n,$$
denoting by $k=(k_{1}, \ldots , k_{n})$ a vector in $\IZ^{n}$, and $|k|=\sqrt{\sum_{j=1}^{n}|k_{j}|^{2}}$, the Parseval identity yields
$$\|u\|_{2}^{2} = \sum_{k \in \IZ^{n}} |c_{k}|^{2} \,,\qquad \|\nabla u\|_{2}^{2} = \sum_{k \in \IZ^{n}} |k|^{2}\,|c_{k}|^{2}.$$
Let $\rho >0$ to be fixed later on. One has, considering $\|k\|_{\infty}=\max\limits_{j\in \llbracket 1, n\rrbracket}\{|k_{j}|\}$:
\begin{align*}
\|u\|_{2}^{2} &=  \sum_{k \in \IZ^{n}} |c_{k}|^{2} =\sum_{\|k\|_{\infty} \leq \rho} |c_{k}|^{2} + \sum_{\|k\|_{\infty} > \rho} |c_{k}|^{2}\\
&\leq  \sum_{\|k\|_{\infty} \leq \rho }|c_{k}|^{2}+\frac{1}{\rho^{2}}\sum_{\|k\|_{\infty} > \rho}\|k\|_{\infty}^{2} |c_{k}|^{2}\\
&\leq  \sum_{\|k\|_{\infty} \leq \rho }|c_{k}|^{2}+\frac{1}{\rho^{2}}\sum_{\|k\|_{\infty} > \rho}|k|^{2} |c_{k}|^{2}\\
&\leq  \sum_{\|k\|_{\infty} \leq \rho }|c_{k}|^{2}+\frac{1}{\rho^{2}}\|\nabla u \|_{2}^{2}.
\end{align*}
And:
\begin{align*}
\sum_{\|k\|_{\infty} \leq \rho }|c_{k}|^{2} &= \sum_{\|k\|_{\infty} \leq \rho } \left| \dst{\int_{\mathbb{T}^{n}} u(x) e^{-2\pi ikx} \, \dd x} \right|^{2} \ \leq \    \|u\|_{1}^{2} \sum_{\|k\|_{\infty} \leq \rho }1 \ \leq\   (2\rho+1)^{n} \|u\|_{1}^{2}.
\end{align*}
Consequently:
\begin{equation}
\|u\|_{2}^{2} \leq 3^{n}(\rho \vee 1)^{n}  \|u\|_{1}^{2}+\frac{1}{\rho^{2}}\|\nabla u \|_{2}^{2}.\label{NashFourierStep1}
\end{equation}
We now distinguish between two cases:
\begin{itemize}
\item[(i)] If $3^{n}\| u\|_{1}^{2} \leq  \|\nabla u \|_{2}^{2}$, by choosing
$$\rho= 3^{-\frac{n}{n+2}} \frac{\|\nabla u\|_{2}^{\frac{2}{n+2}}}{\|u\|_{1}^{\frac{2}{n+2}}} \geq 1,$$  inequality \eqref{NashFourierStep1} 
 yields:
\begin{align}
\|u\|_{2}^{2}&\leq  3^{n}3^{-\frac{n^{2}}{n+2}} \|\nabla u\|_{2}^{\frac{2n}{n+2}}\|u\|_{1}^{\frac{4}{n+2}} + 3^{\frac{2n}{n+2}}\|\nabla u\|_{2}^{\frac{2n}{n+2}}\|u\|_{1}^{\frac{4}{n+2}}\nonumber\\
&= 2\cdot 3^{\frac{2n}{n+2}}\|\nabla u\|_{2}^{\frac{2n}{n+2}}\|u\|_{1}^{\frac{4}{n+2}}.\label{NashFourierCaseA}
\end{align}
\item[(ii)] If $3^{n}\| u\|_{1}^{2} \geq  \|\nabla u \|_{2}^{2}$, one wishes to rely on the Poincaré-Wirtinger inequality on the torus~$\IT^n$. The optimal Poincaré constant in $H^{1}_{0}(\IT^{n})$ being equal to $\lambda_{1}^{-1}$, where $\lambda_{1}=4\pi^2$ is the first non trivial eigenvalue of the negative Laplacian $-\Delta$, one can consider the following Poincaré-Wirtinger inequality:
\begin{equation}
\|u-\bar{u}\|_{2}^{2} \leq \frac{1}{4\pi^{2}}\|\nabla u\|_{2}^{2}, \qquad  \forall u \in H^{1}(\IT^{n}),\label{PWnTorus}
\end{equation}
where $\bar{u}=\dst{\int_{\IT^{n}} u(x) \, \dd x}$. One consequently gets:
\begin{align}
\|u\|_{2}^{2} &\leq  2\bar{u}^{2}+2\|u-\bar{u}\|_{2}^{2} \nonumber\\
&\leq  2\|u\|_{1}^{2} +2\frac{1}{4\pi^{2}} \|\nabla u\|_{2}^{2} \nonumber \\
&=  2\|u\|_{1}^{2} +\frac{1}{2\pi^{2}} \|\nabla u\|_{2}^{\frac{2n}{n+2}}\|\nabla u \|_{2}^{\frac{4}{n+2}} \nonumber \\
&\leq  2\|u\|_{1}^{2} +\frac{1}{2\pi^{2}}3^{\frac{2n}{n+2}} \|\nabla u\|_{2}^{\frac{2n}{n+2}}\| u \|_{1}^{\frac{4}{n+2}}.\label{NashFourierCaseB}
\end{align}
\end{itemize}
Combining \eqref{NashFourierCaseA} and \eqref{NashFourierCaseB}, one obtains:
$$
\|u\|_{2}^{2} \leq 2\|u\|_{1}^{2}+  3^{\frac{2n}{n+2}} \po\frac{1}{2\pi^{2}} \vee 2\pf \|\nabla u \|_{2}^{\frac{2n}{n+2}}\|\nabla u \|_{1}^{\frac{4}{n+2}}
$$
which yields \eqref{NashTorusn}, with $a=2\cdot 3^{\frac{2n}{n+2}} $.
\end{proof}

We are now in position to prove Proposition~\ref{PropHistoUniform}.
\begin{proof}[Proof of Proposition~\ref{PropHistoUniform}]
We will rely on the idea of the proof of \cite[Theorem 6.3.1]{BakryGentilLedoux}. Let us start with two preliminary results. Let $\varphi \in \mathcal{C}^{\infty}(\mathbb{T}^{m})$ be a test function and consider:
$$\forall z \in \mathbb{T}^{m}, \qquad \varphi_{t}(z)=\mathbb{E}_{z}[\varphi(Z_{t}) ]=\mathbb{E}[\varphi(Z_{t})\, |\, Z_{0}=z],$$
where $(Z_{t})_{t\geq 0}$ satisfies the following dynamics:
$$
\dd Z_{t}= \left(B_{t}-G_{t}\right)(Z_{t})\dd t +\sqrt{2} \dd W_{t},$$
where $(W_{t})_{t\geq 0}$ is a $n$-dimensional Brownian motion and $\nabla \cdot \left(B_{t}-G_{t}\right)=0$. Let $\nu_{Z}$ be the invariant measure of this dynamics, $\mathcal{L}=\left(B_{t}-G_{t}\right) \cdot \nabla + \Delta$ its infinitesimal generator, and $\mathcal{L}^{*}=-\nabla \cdot \left(B_{t}-G_{t}\right) +\Delta$ its adjoint in $L^{2}(\nu_{Z})$.
Using Itô calculus, $\varphi_{t}$ satisfies:
\begin{equation}
\varphi_{0}=\varphi, \quad \partial_{t}\varphi_{t}=\Delta \varphi_{t} + \left(B_{t}-G_{t}\right)\cdot\nabla  \varphi_{t}  \label{EqPhi}
\end{equation}
which is equivalent to
\begin{equation*}
\varphi_{0}=\varphi, \quad \partial_{t}\varphi_{t}=\Delta \varphi_{t} +\nabla \cdot \left((B_{t}-G_{t})\varphi_{t}\right).
\end{equation*}
Given the result of Lemma~\ref{Lemme-loi-pi^xi}, $\pi_{t}^{\xi}-1$ satisfies:
\begin{equation}
\partial_{t} \left(\pi_{t}^{\xi}-1\right)= \Delta \left(\pi_{t}^{\xi}-1\right) - \nabla \cdot \left((B_{t}-G_{t}) (\pi_{t}^{\xi}-1)\right). \label{EqXimoinsUn}
\end{equation}

For a fixed $t>0$, one as, for all $0 \leq s \leq t$:
\begin{align*}
\frac{\dd}{\dd s} \int_{\IT^{n}} \varphi_{t-s} \left(\pi_{s}^{\xi}-1\right) &= -\int_{\IT^{n}} \mathcal{L}\varphi_{t-s} \left(\pi_{s}^{\xi}-1\right) +\int_{\IT^{n}} \varphi_{t-s} \mathcal{L}^{*}\left(\pi_{s}^{\xi}-1\right)=0.
\end{align*}
Integrating between $s=0$ and $s=t$ yields
\begin{equation}
\dst{\int_{\mathbb{T}^{m}} \varphi_{t}(\pi_{0}^{\xi}-1)}=\dst{\int_{\mathbb{T}^{m}} \varphi(\pi_{t}^{\xi}-1)}, \qquad \forall t \geq 0. \label{eq:equality-phi-t-phi-0}
\end{equation}

Second, for all $t\geq 0$, 
\begin{equation}
\|\varphi_{t}\|_{1} \leq \|\varphi\|_{1}.
\label{eq:ineq-phi-t-phi-0}
\end{equation}
Indeed, one has on the torus $\IT^{m}$:
$$\|\varphi_{t}\|_{1} \leq \dst{\int_{\IT^{m}} \psi(t,z) \, \dd z}$$
where, for all $t \geq 0$ and $z\in \IT^{n}$, $\psi(t,z)=\IE[\, |\varphi(Z_{t})| \, |\, Z_{0}=z\,]\geq 0$ satisfies \eqref{EqPhi} with initial condition $\psi(0,.)=|\varphi(.)| \geq 0$ on $\IT^{m}$. Integrating by parts and using that $\nabla\cdot(B_t-G_t)=0$ one can check that $\frac{\dd }{\dd t}\int_{\IT^{m}} \psi =0$, so that:
$$\dst{\int_{\IT^{m}} \psi(t,z) \, \dd z }= \dst{\int_{\IT^{m}} \psi(0,z) \, \dd z}= \|\varphi\|_{1}, \quad \forall t \geq 0,$$
hence the result.\\

\noindent\textbf{Step 1:} Now let us show that there exists $\mathcal{C}>0$ such that, for all $t > 0$, $$\|\varphi_{t}\|_{2}^{2} \leq \left(\mathcal{C}t^{-\frac{m}{2}}+2\right) \|\varphi\|_{1}^{2}.$$ To do so, consider for all $t\geq 0$, $$\Lambda(t)=\dst{\int_{\mathbb{T}^{m}} |\varphi_{t}|^{2}}.$$
Since $\nabla \cdot\left(B_{t}-G_{t}\right)=0$ one can show from \eqref{EqPhi} that:
$$\Lambda'(t)=-2\dst{\int_{\mathbb{T}^{m}} |\nabla \varphi_{t}|^{2}}.$$
Knowing that $\|\varphi_{t}\|_{1} \leq \|\varphi\|_{1}$ for all time $t\geq 0$, we use the inequality \eqref{NashTorusn} given by Lemma~\ref{LemNashTorusn} to obtain:
$$\Lambda(t) \leq 2\|\varphi\|_{1}^{2}+a\left[-\frac{1}{2}\Lambda'(t)\right]^{\frac{m}{m+2}}\|\varphi\|_{1}^{\frac{4}{m+2}}.$$
Consider for all $t\geq 0$, $g(t)=\Lambda(t)-2\alpha$, where $\alpha=\|\varphi\|_{1}^{2}$. By construction, $g$ is decreasing on $\IR^{+}$. We distinguish between three cases:
\begin{itemize}
\item[(i)] Assume that $g(0) \leq 0$. In this case, $g(t) \leq 0$ for all $t \geq 0$ and, for all $t \geq 0$:
$$\|\varphi_{t}\|_{2}^{2} \leq 2 \|\varphi\|_{1}^{2}.$$
\item[(ii)] Assume that $g(t) >0$ for all $t\geq 0$. Then:
\begin{align*}
 g(t) \leq a\alpha^{\frac{2}{m+2}} \left[-\frac{1}{2}g'(t)\right]^{\frac{m}{m+2}}
&\Leftrightarrow  g(t)^{\frac{m+2}{m}} \leq -\frac{1}{2}a^{\frac{m+2}{m}} \alpha^{\frac{2}{m}}g'(t)\\
&\Leftrightarrow  g'(t) \leq -2\cdot a^{-\frac{m+2}{m}} \alpha^{-\frac{2}{m}}g(t)^{\frac{m+2}{m}} \\
&\Leftrightarrow   g'(t)g^{-\frac{m+2}{m}} \leq -2\cdot a^{-\frac{m+2}{m}}\alpha^{-\frac{2}{m}}  \\
&\Leftrightarrow  -\frac{m}{2}\frac{\dd }{\dd t } \left(g(t)^{-\frac{2}{m}} \right) \leq -2\cdot a^{-\frac{m+2}{m}}\alpha^{-\frac{2}{m}}\\
&\Leftrightarrow  \frac{\dd }{\dd t } \left(g(t)^{-\frac{2}{m}} \right) \geq \frac{4}{m}\cdot a^{-\frac{m+2}{m}} \alpha^{-\frac{2}{m}}.
\end{align*}
Integrating between $0$ and $t$ yields:
\begin{align*}
g(t)^{-\frac{2}{m}} \geq g(0)^{-\frac{2}{m}} +\frac{4}{m}\cdot a^{-\frac{m+2}{m}}\alpha^{-\frac{2}{m}}t &\Leftrightarrow  g(t)^{\frac{2}{m}} \leq \dst{\frac{1}{g(0)^{-\frac{2}{m}} +\frac{4}{m}\cdot a^{-\frac{m+2}{m}}\alpha^{-\frac{2}{m}}t }}  \\
&\Leftrightarrow  g(t)^{\frac{1}{m}} \leq \dst{\frac{1}{\left(g(0)^{-\frac{2}{m}} +\frac{4}{m}\cdot a^{-\frac{m+2}{m}} \alpha^{-\frac{2}{m}}t \right)^{\frac{1}{2}}}}  \\
&\Rightarrow  g(t)^{\frac{1}{m}} \leq \dst{\frac{1}{\left(\frac{4}{m}\cdot a^{-\frac{m+2}{m}}  \alpha^{-\frac{2}{m}}t \right)^{\frac{1}{2}}}}\\
&\Leftrightarrow  g(t) \leq 2^{-m}a^{\frac{m+2}{2}}m^{\frac{m}{2}} \alpha t^{-\frac{m}{2}}.
\end{align*}
Eventually for all $t \geq 0$:
$$ \|\varphi_{t}\|_{2}^{2} \leq \left(\mathcal{C}t^{-\frac{m}{2}}+2\right) \|\varphi\|_{1}^{2}$$
with $\mathcal{C}=2^{-m}a^{\frac{m+2}{2}}m^{\frac{m}{2}}>0$.

\item[(iii)] Assume that $g(0)> 0$ and let us assume that $t^{*}>0$ is the smallest time $t$ such that $g(t^{*})\leq 0$. In this case, using the above reasonings, one obtains:
\begin{itemize}
\item[a)]For all $t\geq t^{*}, \, g(t) \leq g(t^{*})<0$ and thus
$$\|\varphi_{t}\|_{2}^{2} \leq 2 \|\varphi\|_{1}^{2}.$$
\item[b)]For all $t\in [0,t^{*}[, \, g(t)> 0$ and thus:

$$ \|\varphi_{t}\|_{2}^{2} \leq \left(\mathcal{C}t^{-\frac{m}{2}}+2\right)  \|\varphi\|_{1}^{2}.$$
\end{itemize}
\end{itemize}
Hence, for all $t \geq 0$, $\|\varphi_{t}\|_{2}^{2} \leq \left(\mathcal{C}t^{-\frac{m}{2}}+2\right) \|\varphi\|_{1}^{2}$.\\

\noindent\textbf{Step 2:} Now, for all $t \geq 0$, equation~\eqref{eq:equality-phi-t-phi-0} yields:
$$\left| \dst{\int_{\IT^{m}} \varphi \left(\pi_{t}^{\xi}-1 \right) } \right|^{2} = \left| \dst{\int_{\IT^{m}} \varphi_t \left(\pi_{0}^{\xi}-1 \right) } \right|^{2}.$$
Hence, for all $t \geq 0$:
\begin{align*}
\left| \dst{\int_{\IT^{m}} \varphi \left(\pi_{t}^{\xi}-1 \right) } \right|^{2} &\leq  \|\varphi_t\|_{2}^{2} \|\pi_{0}^{\xi}-1\|_{2}^{2}\\
&\leq  \left( Ct^{-\frac{m}{2}}+2 \right) \|\varphi\|_{1}^{2} \|\pi_{0}^{\xi}-1\|_{2}^{2} \quad \text{(using Inequality~\eqref{eq:ineq-phi-t-phi-0})}
\end{align*}
Since this is true for any function $\varphi \in L^{1}(\IT^{m})$, by duality, for all $t \geq 0$:
 \begin{equation}
\|\pi_{t}^{\xi}-1\|_{\infty} \leq \sqrt{\left(\mathcal{C}t^{-\frac{m}{2}}+2\right)}\|\pi_{0}^{\xi}-1\|_{2}.\label{BoundNormProofPropHistoUniform}
\end{equation}
\noindent\textbf{Step 3:} Considering the equation satisfied by $\pi_{t}^\xi$ given in Lemma~\ref{Lemme-loi-pi^xi}, with initial condition $\pi_{s}^{\xi}$ with $s \geq 0$, and using inequality \eqref{BoundNormProofPropHistoUniform}  over the time interval $[s,s+1]$, there exists $\mathcal{K}=\mathcal{K}(m)>0$ such that:
$$\|\pi_{s+1}^{\xi}-1\|_{\infty} \leq  \mathcal{K}\|\pi_{s}^{\xi}-1\|_{2}.$$
Denote by $H^{1}_{0}(\IT^{n})$ the closure of the space $\mathcal{C}^{\infty}_{0}(\IT{^n})$ of indefinitely differentiable functions with compact support, with respect to the Sobolev norm $\|\cdot \|_{H^{1}}$. Using the same reasoning as in the proof of Proposition~\ref{PropHistoPlat}, since $\int_{\IT^{m}}(\pi_{t}^{\xi}-1)=0$, $(\pi_{t}^{\xi}-1)$ belongs in $H^{1}_{0}(\IT^{n})$, and, using equation \eqref{EqXimoinsUn} and the Poincaré-Wirtinger inequality \eqref{PWnTorus}, one has:
\begin{equation}\label{eq:pixiL2}
\|\pi_{t}^{\xi}-1\|_{2} \leq \|\pi_{0}^{\xi}-1\|_{2}e^{-4\pi^{2}t}, \quad \forall t \geq 0.
\end{equation}
Eventually, for all $t \geq 1$:
$$\|\pi_{t}^{\xi}-1\|_{\infty} \leq \mathcal{K}\|\pi_{t-1}^{\xi}-1\|_{2}\leq \mathcal{K}e^{-4\pi^{2}(t-1)}\|\pi_{0}^{\xi}-1\|_{2},$$
which concludes the proof with $\mathcal{C}=\mathcal{K}e^{4\pi^{2}}$.
\end{proof}
\begin{rem}
Note that one could use the maximum principle for times $t \in [0,1]$ in order to replace the right-hand term $\|\pi_{0}^{\xi}-1\|_{2}$ by the $L^\infty$-norm $\|\pi_{0}^{\xi}-1\|_{\infty}$. Indeed, since by~Assumption~\ref{AssuCINI}, $\pi_{0}^{\xi}$ is continuous on $\IT^m$, one has a uniform bound on $\pi_{0}^{\xi}-1$. Nevertheless, considering an $L^2$-bound highlights the fact that the uniform bound at time $0$ is not essential to the proof of Proposition~\ref{PropHistoUniform}\nv{, which could be useful for possible generalizations to non-bounded state space cases.}
\end{rem}

\section{Existence of a stationary measure }
\label{sec:existence-equilibrium}
In Section~\ref{subsec:equilibrium-estimates-diffusion} we state and prove preliminary estimates on the invariant probability measures of homogeneous diffusions. We then proceed in Section~\ref{subsec:proof-existence} to prove Theorem~\ref{ThmExistence}, which gives the existence of a stationary state to~\eqref{EqFokkerPlanckABF} in the general case, where the force $\mathcal{F}$ can be non-conservative. Eventually, one can find in Section~\ref{subsec:perturb-diffusion} the proof of Proposition~\ref{PropPerturb} where one establishes bounds on the bias of the free energy estimation and on the bias on the computations of averages with respect to $\mu$.

\subsection{Preliminary estimates for homogeneous  diffusions}
\label{subsec:equilibrium-estimates-diffusion}

The next section is concerned with the sensitivity of the equilibrium measure of a diffusion with respect to its drift, when this drift is in $L^p$ for some $p$. Consider the following process on $\mathbb{T}^{n}$, with $n\geq 1$:
 \begin{equation}
\dd X_{t}=a(X_{t})\dd t +\sqrt{2} \dd W_{t} \label{EqA}
\end{equation}
with $(W_{t})_{t\geq 0}$  a classical $n$-dimensional Brownian motion on the torus $\IT^{n}$ and $a\in L^p(\mathbb{T}^n,\IR^n)$ for $p\geqslant 2$ with  $p>n$. We refer to \cite{Krylov} for a probabilist study of this SDE (existence, strong Markov and Feller properties, existence and Hölder continuity of the transition kernel, etc.). In the following we take a PDE point of view, namely we are interested in the existence, uniqueness and properties of a solution $\nu$ in $H^1(\mathbb{T}^n)$ such that $\int_{\IT^n} \nu(x) \, \dd x=1$ of the following equation:
\begin{equation}\label{eq:equilibreFPa}
\forall \varphi \in H^1(\mathbb{T}^n),\qquad \int_{\mathbb{T}^n} \po a(z)\cdot \nabla \varphi(z) \nu(z) - \nabla \varphi(z) \cdot \nabla \nu(z) \pf \dd z \ = \ 0\,.
\end{equation}
This implies in particular that $\int_{\mathbb T^n} (\mathcal L \varphi) \nu = 0$ for all $\varphi\in\mathcal C^2(\mathbb T^n)$ with  $\mathcal L$ being the generator of~\eqref{EqA}.
\begin{rem}
Note that the Sobolev embedding $H^1 \hookrightarrow L^q$ for some $q$ such that $\frac{1}{q}>\frac{1}{2}-\frac{1}{n}$ and the assumption that $p>n$ ensure that the integrals in~\eqref{eq:equilibreFPa} are well defined for all $\nu, \varphi$ in $H^{1}(\IT^n)$ and all $a \in L^{p}(\IT^n,\IR^n)$.
\end{rem}

\begin{prop}\label{Prop:driftLp}
Let $M>0$ and $p>n \geqslant 1$ with $p\geqslant 2$. There exists $C>0$ which depends solely on $M, p$ and $n$, such that the following holds. For all $a\in L^{p}(\IT^{n},\IR^n)$ such that $\|a\|_{L^p(\mathbb T^n)}\leqslant M$, there exists a unique probability density $\nu_a \in H^1(\mathbb T^n)$ that solves \eqref{eq:equilibreFPa}, and which is  such that
\[\|\nu_a\|_\infty + \|1/\nu_a\|_\infty + \|\nu_a\|_{H^1(\mathbb T^n)} \ \leqslant \ C\,.\]
 Moreover, if $\nu_b$ is the solution of \eqref{eq:equilibreFPa} with $a$ replaced by $b\in  L^{p}(\IT^{n},\IR^n)$ with $\|b\|_{L^p(\mathbb T^n)}\leqslant M$, then
 \[\|\nu_{a}-\nu_{b}\|_{L^2(\IT^{n})} \leq C\|a-b\|_{L^{2}(\IT^{n})}. \]
\end{prop}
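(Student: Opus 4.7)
The plan is to combine classical De Giorgi--Nash--Moser theory for divergence-form elliptic equations with $L^p$ coefficients together with a symmetry-based energy argument that exploits the Poincar\'e inequality inherited by $\nu_a$ from its pointwise bounds. First I would recast~\eqref{eq:equilibreFPa} as the stationary Fokker--Planck equation $\Delta \nu - \nabla\cdot(a\nu) = 0$ in divergence form on the compact torus $\IT^n$, subject to $\int \nu = 1$. Since $p>n$, the first-order coefficient $a$ lies in the range where Moser iteration applies: classical results of Trudinger (or Gilbarg--Trudinger, Ch.~8) provide constants $C_\infty, c_0 > 0$ depending only on $n, p, M$ such that any non-negative normalized weak solution satisfies $\|\nu\|_\infty \leq C_\infty$ and, via the weak Harnack inequality combined with a finite covering of $\IT^n$, $\nu \geq c_0 > 0$ pointwise. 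Existence of a normalized non-negative weak solution is classical; one way is to consider the parabolic evolution $\partial_t \nu = \Delta\nu - \nabla\cdot(a\nu)$, whose Markov semigroup admits an invariant probability measure by Krylov--Bogolyubov on the compact torus, and to check that this invariant measure is a weak solution of~\eqref{eq:equilibreFPa} enjoying the regularity just described. The $H^1$ bound follows by testing the equation against $\nu_a - 1$ and invoking H\"older's inequality, the Sobolev embedding $H^1 \hookrightarrow L^{2p/(p-2)}$ (valid for $p>n$), and Young's inequality to absorb the resulting $\|\nabla \nu_a\|_2$ factor.

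For the stability estimate I would set $w = \nu_a - \nu_b$ and observe that $w$ solves weakly $\Delta w - \nabla\cdot(a w) = \nabla\cdot((b-a)\nu_b)$, with $\int w = 0$. The key point is that the adjoint $\mathcal L_a^\dagger$ of $\mathcal L_a := a\cdot\nabla + \Delta$ in $L^2(\nu_a)$ has symmetric part $\Delta + \nabla\ln\nu_a\cdot\nabla$, i.e.\ the reversible Fokker--Planck generator with invariant measure~$\nu_a$, whose Dirichlet form is $h\mapsto \int |\nabla h|^2 \nu_a$. Writing $w = h\,\nu_a$, so that $\int h\,\nu_a = 0$, the equation rewrites as $-\nu_a \mathcal L_a^\dagger h = \nabla\cdot((b-a)\nu_b)$, and testing against $h$ and integrating by parts yields
\begin{equation*}
\int |\nabla h|^2 \nu_a \ = \ -\int \nabla h\cdot(b-a)\nu_b \ \leq \ \left(\int |\nabla h|^2 \nu_a\right)^{1/2} \left(\int \frac{\nu_b^2}{\nu_a}|b-a|^2\right)^{1/2}.
\end{equation*}
The uniform bounds on $\nu_a, \nu_b$ from the first step turn this into $\int |\nabla h|^2 \nu_a \leq C\|b-a\|_{L^2}^2$. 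A Poincar\'e inequality for $\nu_a$, obtained by a Holley--Stroock perturbation from the flat torus thanks to the uniform upper and lower bounds on $\nu_a$, then gives $\int h^2 \nu_a \leq C\|b-a\|_{L^2}^2$, and the pointwise bound $\|\nu_a\|_\infty \leq C_\infty$ produces $\|w\|_{L^2}^2 \leq C_\infty \int h^2 \nu_a$, which closes the estimate. Uniqueness follows as the special case $a=b$.

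The main obstacle I anticipate is the derivation of the uniform $L^\infty$ and positive lower bounds on $\nu_a$ with constants depending only on $n, p, M$: this requires a careful tracking of the constants in Moser's iteration and in the weak Harnack inequality in the regime where the first-order coefficient is only in $L^p$ (and not in $L^\infty$). Once these pointwise bounds are in hand, existence, the $H^1$ estimate and the stability bound reduce to fairly standard PDE energy arguments along the lines sketched above.
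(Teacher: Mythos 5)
Your proposal is correct and its core coincides with the paper's: the uniform two-sided pointwise bounds come from a Harnack-type inequality for divergence-form operators with $L^p$ drift, $p>n$ (the paper invokes the Harnack inequality of Bogachev--Krylov where you invoke Moser iteration and the weak Harnack inequality --- the same tool, with the same dependence of constants on $M,p,n$), and the stability estimate is obtained by symmetrizing in $L^2(\nu_a)$. Indeed your substitution $w=h\nu_a$ with test function $h$, supplemented by the $a$-equation tested against $h^2/2$ to cancel the cross term, is algebraically identical to the paper's choice of test functions $\nu_b/\nu_a$ and $(\nu_b/\nu_a)^2/2$ (your $h$ is $1-\nu_b/\nu_a$), and both arguments conclude with the Poincar\'e inequality for $\nu_a$ inherited from the flat torus through the two-sided density bounds. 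The genuine difference is the existence step: you apply Krylov--Bogolyubov directly to the semigroup with $L^p$ drift and must then justify that the invariant measure has an $H^1$ density, whereas the paper first treats smooth drifts (where Dynkin's results, a Doeblin condition and elliptic regularity apply cleanly), extracts the uniform $H^1$ bound, and passes to the limit by weak $H^1$ / strong $L^2$ compactness --- a more self-contained route that avoids regularity theory for invariant measures of rough-drift diffusions. One small imprecision to fix: as written, your $H^1$ bound via H\"older with exponents $(p,2,2p/(p-2))$ and the embedding $H^1\hookrightarrow L^{2p/(p-2)}$ yields $\|\nabla\nu_a\|_2^2\leq M C_S\|\nabla\nu_a\|_2\|\nu_a\|_{H^1}$, which Young's inequality cannot absorb unless $MC_S<1$; you need either the strict interpolation afforded by $p>n$ together with an a priori $L^2$ bound on $\nu_a$, or simply the already-established $L^\infty$ bound and Cauchy--Schwarz, which is what the paper does.
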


\begin{rem}\label{Rem-W1p} 
In the case of a gradient drift $a=-\nabla \mathcal{A}$, the invariant measure $\nu_{a}$ is explicit: for all $z \in \IT^n$,
$$\nu_{a}(z)= \frac{1}{Z_{\mathcal{A}}} e^{-\mathcal{A}(z)}, \qquad Z_{\mathcal{A}}=\int_{\IT^n} e^{-\mathcal{A}(z)} \, \dd z,$$
and the $\mathcal L^\infty$-bound of Proposition~\ref{Prop:driftLp} amounts to the continuous injection given by Morrey's inequality ~\cite[Theorem IX.12]{Brezis}, 
\begin{equation}
W^{1,p}(\IT^{n})\hookrightarrow L^\infty(\IT^n), \qquad \forall p>n.\label{Morrey}
\end{equation}
Indeed, if $\mathcal{A} \in W^{1,p}(\IT^{n})$, then $\mathcal{A} \in L^{\infty}(\IT^{n})$ and $\nu_{a}$ is bounded from above and below (and conversely if $\nu_a$ is bounded above and below then $\mathcal A$ is bounded). In particular, since this injection is false for $p\leqslant n$, we see that the condition $p>n$ is necessary in  Proposition~\ref{Prop:driftLp} .
\end{rem}

\begin{proof}
\noindent\textbf{Step 1:} 
First assume that  $a \in \mathcal{C}^{\infty}(\IT^n, \IR^n)$. By \cite[Theorem 5.11]{Dynkin65}, there exists a Markov process $(\tilde{X}_t)_{t \geq 0}$ on $\IR^n$ whose transition probability density is given by the fundamental solution of the equation $\partial_t  \tilde{f}_t = -\Div \left(a \tilde{f}_t - \nabla \tilde{f}_t\right)$, where $a$ is seen as a $1$-periodic function on $\IR^n$. Note that by \cite[Theorem 0.5 and Condition 0.24.A1]{DynkinVol2}, the density $\tilde{f}_t$ is strictly positive and depends continuously on the initial condition. Moreover, \cite[Theorems 11.4 and 11.5]{Dynkin65} yield that $(\tilde{X}_t)_{t \geq 0}$ solves the stochastic differential equation~\eqref{EqA} on $\IR^n$. Now, consider $(X_t)_{t\geqslant 0}$ the image of $(\tilde X_t)_{t\geqslant 0}$ by the canonical projection from $\IR^n$ to $\IT^n$. Since $a$ is periodic,  $(X_t)_{t\geqslant 0}$ solves \eqref{EqA} as an equation on $\IT^n$, and thus, using Itô's formula, it is a Markov process (the proof is the same as \cite[Theorems  11.5]{Dynkin65} in $\IR^n$). Denote by $(P_t)_{t\geqslant 0}$ the associated Markov semigroup on $L^\infty(\IT^n)$. The positivity and continuity in the initial condition of $\tilde{f}_t$ implies that, for all $t>0$, there exists $r_t>0$ such that for all $x\in\IT^n$ and all Borel set $A$ of $\IT^n$, $\mathbb P_x(X_t\in A) \geqslant r_t \lambda(A)$, namely the process satisfies a uniform Doeblin condition. In particular, for a fixed $t>0$, the Markov chain with transition operator $P_t$ is recurrent and irreducible and thus, by \cite[Theorem 10.0.1]{MeynTweedie93}, it admits a unique invariant measure $\nu_a$. Now, for $s\geqslant 0$,  $(\nu_a P_s)P_t = (\nu_a P_t)P_s= \nu_a P_s$, which means that $\nu_a P_s$ is an invariant measure for $P_t$. Hence by uniqueness, $\nu_a P_s = \nu_a$ for all $s\geqslant 0$. In other words, $\nu_a$ is  the unique invariant measure for the semigroup $(P_t)_{t \geq 0}$.

Now, let $ \varphi \in \mathcal{C}^{2}(\IT^n)$.  Denoting by $\mathcal{L}= a\cdot \nabla + \Delta$  the infinitesimal generator of \eqref{EqA} and using Itô's formula, one gets for all $t \geq 0$
\[0 \ = \ \nu_a \left(P_{t}(\varphi)-\varphi\right) \ = \   \int_0^t \nu_a P_s \mathcal L\varphi \dd s \ = \ t \,\nu_a(\mathcal L\varphi)\,.\]
In other words, $\nu_a$ is a solution of the weak equation
\begin{equation}
 \forall \varphi \in \mathcal{C}^{2}(\IT^n), \qquad \nu_a(\mathcal{L}\varphi) = 0.\label{eq:FP-Kolmogorov-C2}
\end{equation}
By elliptic regularity (e.g.  \cite{Hormander1967} applied to $\nu_a$ seen as a periodic measure on $\IR^n$), 
  $\nu_a$ has then a $\mathcal{C}^{\infty}$ density (that we still denote by $\nu_a$) and, integrating by parts, we can write \eqref{eq:FP-Kolmogorov-C2} as
\begin{equation*}
  \int_{\IT^n} \po a(z)\cdot \nabla \varphi(z) \nu_a(z) - \nabla \varphi(z) \cdot \nabla \nu_a(z) \pf \dd z \ = \ 0
\end{equation*}
for all $\varphi \in \mathcal{C}^{2}(\IT^n)$ and thus, by density, for all $\varphi \in H^1(\IT^n)$. This is \eqref{eq:equilibreFPa}.

Define $\tilde{\nu}_{a}$  on $\IR^n$ by $\tilde{\nu}_{a}(x+k)=\nu_{a}(x)$ for all $k\in\IZ^n$ and $x\in \IT^n$ (seen as $[0,1]^n$). It is such that
\begin{equation*}
\forall \varphi \in H^1(\IR^n),\qquad \int_{\IR^n} \po a(z)\cdot \nabla \varphi(z) \tilde{\nu}_{a}(z) - \nabla \varphi(z) \cdot \nabla \tilde{\nu}_{a}(z) \pf \dd z \ = \ 0\,,
\end{equation*}
 where, again, $a$ is seen as a $1$-periodic function. Since $p>n$, using the notations of \cite{BogachevKrylov} and applying the Harnack inequality \cite[Corollary 1.7.2]{BogachevKrylov}, with the operator $L_{I_{n},a,0}$ ($I_n$ being the identity matrix of size $n$) and the domain $\Omega = [-1,2]^{n}$ which stricly contains $[0,1]^n$, we get that there exists $C_{1}>0$ depending only on $M$, $p$ and $n$ such that:
$$\sup_{z \in [0,1]^n} \tilde{\nu}_{a}(z) \le C_{1} \inf_{z \in [0,1]^{n}} \tilde{\nu}_{a}(z).$$
Using that $\dst{\int_{\IT^n}\nu_a }= 1$, this implies that
\begin{equation}\label{eq:Harnack}
1 \leqslant \ \sup_{z \in \IT^n} \nu_a  \leqslant C_1  \inf_{z \in \IT^n} \nu_a \leqslant C_1 \,.
\end{equation}
Taking $\varphi = \nu_{a}$ in  \eqref{eq:equilibreFPa} and using the Cauchy-Schwarz inequality yields
\[\int_{\mathbb T^n} |\nabla \nu_a|^2 \ = \ \int_{\mathbb T^n} a \cdot \nabla \nu_a \nu_a \ \leqslant \  \|\nu_a\|_\infty \|a\|_{L^2(\mathbb T^n)} \|\nabla \nu_a\|_{L^2(\mathbb T^n)}, \]
hence $\|\nabla \nu_a\|_{L^2(\mathbb T^n)} \leqslant MC_1$. Consequently, using the Poincaré-Wirtinger inequality~\eqref{PWnTorus}, $\|\nu_a\|_{H^1(\mathbb T^n)}\leqslant C_2$ for some $C_2>0$ that depends only on $M,p,n$.

\medskip

\noindent\textbf{Step 2:} Now we consider $a\in L^{p}(\IT^{n},\IR^n)$, with $\|a\|_{L^{p}(\IT^n)} \le M$, and proceed to prove the existence of a solution $\nu_{a}$ to equation~\eqref{eq:equilibreFPa}. Let $(a_k)_{k\in\IN}$ be a sequence of $\mathcal C^\infty$ functions that converges to $a$ in $L^p(\mathbb T^n)$ and such that $\|a_k\|_{L^p(\mathbb T^n)} \leqslant M$ for all $k\in\IN$. Let $(\nu_{a_k})_{k\in\IN}$ be the associated solutions of \eqref{eq:equilibreFPa} given in Step 1. From Step 1,  $(\nu_{a_k})_{k\in\IN}$ is bounded in $H^1(\IT^n)$, and thus we can consider a subsequence that converges weakly in $H^1$ and  strongly in $L^2$ 
 to some $\nu_a \in  H^1(\mathbb T^n)$. The weak convergence in $H^1$ implies that $\nu_a$ solves \eqref{eq:equilibreFPa} and $\|\nu_a\|_{H^1(\mathbb T^n)} \leqslant C_2$. The $L^2$-convergence 
  implies that $\nu_a$ is a probability density.

\medskip

\noindent\textbf{Step 3:} Let us now consider any solution and establish bounds similar to the previous step and a Poincaré inequality. For $a\in L^{p}(\IT^{n},\IR^n)$, let $\nu_a\in H^1(\IT^n)$ be any probability density solution of \eqref{eq:equilibreFPa}. Using again \cite[Corollary 1.7.2]{BogachevKrylov} and the fact that the mass of $\nu_a$ is $1$, we get  that $1/C_1\leqslant \nu_a \leqslant C_1$ with the same constant $C_1$. From this, as   in Step 1, we also get that $\|\nu_a\|_{H^1(\mathbb T^n)}\leqslant C_2$, with the same constant $C_2$. The Poincaré-Wirtinger inequality~\eqref{PWnTorus}, together with the lower and upper bounds on $\nu_a$ classically yields a Poincaré inequality for $\nu_a$. Indeed, for any $\varphi \in H^1(\nu_a)$, $\int_{\IT^n} \varphi \nu_a$ is the minimizer in $\IR$ of $c\mapsto \int_{\IT^n} (\varphi-c)^2 \nu_a$, so that 
\begin{align}
\int_{\IT^n} \po \varphi - \int_{\IT^n} \varphi \nu_a \pf^2 \nu_a  \ \leqslant \ \int_{\IT^n} \po \varphi - \int_{\IT^n} \varphi\pf^2 \nu_a  \
 &\leqslant \ C_1 \int_{\IT^n} \po \varphi - \int_{\IT^n} \varphi\pf^2\nonumber \\
 &\leqslant \ \frac{C_1}{4\pi^2}\int_{\IT^n} |\nabla \varphi |^2  \nonumber \\
 &\leqslant \ \frac{C_1^2}{4\pi^2}\int_{\IT^n} |\nabla \varphi |^2 \nu_a\,. \label{eq:Poincarenua}
\end{align}

\medskip

\noindent\textbf{Step 4:} We now proceed to the proof of the last part of the proposition, from which the uniqueness of $\nu_a$ immediately follows. Let $a,b\in L^p(\IT^n,\IR^n)$ with $L^p$ norms less than $M$ and $\nu_a,\nu_b\in H^1(\IT^n)$ be probability densities solutions of \eqref{eq:equilibreFPa} (with respective drift $a$ and $b$). From the $L^\infty$-bounds on $\nu_a$, $1/\nu_a$, $\nu_b$ and $1/\nu_b$ obtained in Step 3, we get that $\nu_b/\nu_a$ and  $(\nu_b/\nu_a)^2$ are in $H^1(\IT^n)$. Applying \eqref{eq:equilibreFPa} for $b$ with $\varphi = \nu_b/\nu_a$ as a test function,
\begin{align*}
0 \ &=\ \int_{\IT^n} b \cdot \nabla \po \frac{\nu_b}{\nu_a}\pf \nu_b  - \nabla \po \frac{\nu_b}{\nu_a}\pf \cdot \nabla \nu_b \\
 \ &= \ \int_{\IT^n} b \cdot \nabla \po \frac{\nu_b}{\nu_a}\pf \nu_b  - \nabla \po \frac{\nu_b}{\nu_a}\pf \cdot \nabla \left(\frac{\nu_b}{\nu_a} \, \nu_a \right) \\
&=\ \int_{\IT^n} b \cdot \nabla \po \frac{\nu_b}{\nu_a}\pf \nu_b  - \left|\nabla \po \frac{\nu_b}{\nu_a}\pf \right|^2 \nu_a - \frac12 \nabla \po \po \frac{\nu_b}{\nu_a}\pf^2 \pf \cdot \nabla \nu_a \\
&=\ \int_{\IT^n} b \cdot \nabla \po \frac{\nu_b}{\nu_a}\pf \nu_b  - \left|\nabla \po \frac{\nu_b}{\nu_a}\pf \right|^2 \nu_a - \frac12 a \cdot \nabla \po \po \frac{\nu_b}{\nu_a}\pf^2 \pf \nu_a\\
& = \ \int_{\IT^n} b \cdot \nabla \po \frac{\nu_b}{\nu_a}\pf \nu_b  - \left|\nabla \po \frac{\nu_b}{\nu_a}\pf \right|^2 \nu_a -  a \cdot \nabla \po  \frac{\nu_b}{\nu_a} \pf \nu_b
\end{align*} 
where the last term of the above equality stems from \eqref{eq:equilibreFPa} with drift $a$ and test function $\varphi = (\nu_b/\nu_a)^2/2$. As a consequence, using the Cauchy-Schwarz's inequality and the uniform bounds on $\nu_a$ and $\nu_b$, one gets:
\[ \left\|\nabla \po\frac{\nu_b}{\nu_a}\pf \right\|_{L^2(\nu_a)}^2 \  =  \ \int_{\IT^n} (b -a)\cdot \nabla \po \frac{\nu_b}{\nu_a}\pf \nu_b    \ 
 \leqslant \ C_1^2  \| b-a\|_{L^2(\IT^n)}\left\|\nabla \po\frac{\nu_b}{\nu_a}\pf \right\|_{L^2(\nu_a)}\,,\]
 i.e
\begin{equation}
 \left\| \nabla \po\frac{\nu_b}{\nu_a}\pf \right\|_{L^2(\nu_a)} \leq \ C_1^2  \| b-a\|_{L^2(\IT^n)}.\label{eq:bound-grad-nub-nua}
 \end{equation}
Now, since 
$$\left\| \nu_b - \nu_a \right\|_{L^{2}(\IT^n)}^2 \leq \|\nu_a\|_{\infty} \, \dst{\int_{\IT^n} \left| \frac{\nu_b}{\nu_a}-1\right|^{2} \, \nu_a} \leq C_1 \left\| \frac{\nu_b}{\nu_a}-1 \right\|_{L^{2}(\nu_a)}^2 ,$$
using the Poincaré inequality~\eqref{eq:Poincarenua} with $\varphi = \nu_b/\nu_a$ (so that $\int_{\IT^n} \varphi \nu_a = 1$) yields:
\begin{align*}
\left\| \nu_b - \nu_a \right\|_{L^{2}(\IT^n)}^2  & \leq  \frac{C_1^3}{4\pi^2} \left\| \nabla \left(\frac{\nu_b}{\nu_a}\right) \right\|_{L^{2}(\nu_a)}^2 \\
& \leq \frac{C_1^7}{4\pi^2}  \| b-a\|_{L^2(\IT^n)}^2 \quad \text{(using~\eqref{eq:bound-grad-nub-nua})}.
\end{align*}
Hence
$$ \|\nu_b-\nu_a\|_{L^2(\IT^n)} \ \leqslant \  \frac{C_{1}^{\frac{7}{2}}}{2\pi} \| b-a\|_{L^2(\IT^n)}\,.$$
In the particular case $a=b$, we get that there is only one probability density $\nu\in H^1(\IT^n)$ that solves \eqref{eq:equilibreFPa}.
\end{proof}

\subsection{Proof of Theorem \ref{ThmExistence}}
\label{subsec:proof-existence}

\begin{proof}
Let us recall that one can assume, without loss of generality, that $\beta = 1$ (see the change of variables at the begining of Section~\ref{sec:long-time-cv}). From now on, let us fix $p=n+1$. Consider \nv{$\mathcal{P}^{+}$ the set of probability densities on $\IT^n$  that are lower bounded by a positive constant.}
Given a probability measure $\pi \in \mathcal{P}^{+}$, let 
$$G_{\pi}(x)=\displaystyle{\int_{\mathbb{T}^{n-m}} -\mathcal{F}_{1}(x,y) \, \frac{\pi(x,y)}{\pi^{\xi}(x)} \, \dd y}, \quad \forall x \in \mathbb{T}^m,$$
 where $\pi^{\xi}(.)= \displaystyle{\int_{\mathbb{T}^{n-m}} \pi(.,y) \, \dd y}$. In the ABF case, set $B_\pi = G_\pi$ and, in the PABF case, consider the Helmholtz projection 
$$B_{\pi}=\nabla H_{\pi}=\mathsf{P}_{L^{2}(\lambda)} (G_{\pi}).$$
 In both cases, given \cite[Lemma 15.13]{Ambrosio}, for all $p\geq 2$, there exists a constant $c^* >0$ such that,
 \begin{equation}
 \|B_\pi\|_{L^{p}(\IT^m)} \leqslant c^{*} \|G_\pi\|_{L^{p}(\IT^m)} \leqslant c^{*}\|\mathcal F\|_\infty, \label{Dem-ThmExistence-BoundLp}
 \end{equation}
  in other words, for every $\pi \in \mathcal{P}^{+}$, $B_{\pi}$ belongs to the $L^p$ ball $E=\{ f\in L^p(\mathbb T^m),\ \|f\|_{L^{p}(\IT^m)} \leq c^* \|\mathcal{F}\|_{\infty} \}$. In return, given $B \in E$, consider the infinitesimal generator $\mathcal{L}_{B}= \left( \mathcal{F}+B\right) \cdot \nabla + \Delta$ and denote by $\pi_{B}  $ its invariant measure\nv{, such as given in Proposition~\ref{Prop:driftLp} (in particular $\pi_{B} \in \mathcal{P}^{+}$)}. Composing these two steps, we obtain an application from $E$ to itself,
$$\begin{array}{lclcl}
 
T&: & E & \longrightarrow & E \\
 & & f &\longmapsto & B_{\pi_{f}}
\end{array}.$$
The link with  Theorem~\ref{ThmExistence} is that a probability measure $\pi$ is a stationary state for the non-linear dynamics~\eqref{EqFokkerPlanckABF} if and only if the associated bias $B_{\pi}$ is a fixed point of $T$. Proving Theorem~\ref{ThmExistence} is thus equivalent to prove that $T$ admits a fixed point. This will be established thanks to the Schauder’s fixed point theorem~\cite[Part 9.2.2 Theorem 3]{EvansPDE}. One thus have to prove that $T$ is continuous on $(E,\|\cdot\|_{L^{p}(\IT^m)})$ and that the family $T(E):=\{T(B),\ B\in E\}$ has compact closure in $L^p$. We have already seen that $T(E) \subset E$, which is a bounded subset of $L^p$. From the Fréchet-Kolmogorov theorem \cite[Theorem IV.25]{Brezis}, compactness follows from the following condition :
\begin{equation}\label{eq:Frechet-Kol}
\sup_{z\in\IR^{n},|z|\leqslant \delta} \sup_{f\in T(E)}\|\tau_z f -f\|_{L^{p}(\IT^m)} \ \underset{\delta\rightarrow 0}\longrightarrow \ 0,
\end{equation}
where $\tau_z$ is the translation operator, namely $\tau_z f(x) = f(x+z)$ for all $x\in\mathbb T^m$.

Let us recall that from \nv{Proposition~\ref{Prop:driftLp} } there exists a constant $C >0$ such that for all $B\in E$,
\begin{equation}
\|\pi_B\|_{H^1(\mathbb T^n)}+\|\pi_B\|_{\infty}+\|1/\pi_B\|_\infty \leqslant C \label{eq:Dem-Thm-Existence-bound-inv-mes}
\end{equation}
and for all $B_1,B_2\in E$,
\begin{equation}
\|\pi_{B_1} - \pi_{B_2}\|_{L^2(\mathbb T^n)} \ \leq \ C \|B_1-B_2\|_{L^2(\IT^m)}\,.\label{eq:Dem-Thm-Existence-bound-diff-mes}
\end{equation}

\bigskip 

\noindent{\textbf{Continuity of $T$}}.  Let $B_1,B_2\in E$ and to alleviate notations, denote by $\pi_1 = \pi_{B_1}$, $\pi_2=\pi_{B_2}$ the associated invariant measures. In both the ABF and PABF cases, using the same arguments as in~\eqref{Dem-ThmExistence-BoundLp} one gets:
 $$\|T(B_1)-T(B_2)\|_{L^{p}(\IT^m)} \leqslant c^* \|G_{\pi_{1}}-G_{\pi_{2}}\|_{L^{p}(\IT^m)}.$$
Moreover, relying on inequalities~\eqref{eq:Dem-Thm-Existence-bound-inv-mes} and ~\eqref{eq:Dem-Thm-Existence-bound-diff-mes}, one has, for all $x\in\mathbb T^m$,
\begin{align*}
 |G_{\pi_1}(x)-G_{\pi_2}(x)| 
& \leq  \|\mathcal F\|_{\infty}   \displaystyle{\int_{\mathbb{T}^{n-m}}\left| \frac{ \pi_{1}(x,y)}{\pi_1^\xi(x)} - \frac{\pi_{2}(x,y)}{\pi_2^\xi(x)} \right | \, \dd y} \\
  & \leq  \|\mathcal F\|_{\infty}\int_{\mathbb{T}^{n-m}}  \frac{|\pi_1(x,y)-\pi_2(x,y)|}{\pi^\xi_1(x)}+\frac{\pi_2(x,y)|\pi_1^\xi(x)-\pi_2^\xi(x)|}{\pi_1^\xi(x) \pi_2^\xi(x)} \, \dd y\\
& \leq  \|\mathcal F\|_{\infty}C^3\int_{\mathbb{T}^{n-m}}   |\pi_1(x,y)-\pi_2( x,y)|+|\pi_1^\xi(x)-\pi_2^\xi(x)|  \dd y\\
& \leq  2\|\mathcal F\|_{\infty}  C^3\int_{\mathbb{T}^{n-m}}   |\pi_1(x,y)-\pi_2(x,y)|  \dd y\\
&\leq 2\|\mathcal F\|_{\infty}  C^3 \|\pi_{1}-\pi_{2} \|_{L^{2}(\IT^n)}\\
& \leq  2\|\mathcal F\|_{\infty}  C^4 \|B_{1}-B_{2} \|_{L^{2}(\IT^m)}\,.
\end{align*}
As a consequence, since $p\ge 2$, by Sobolev embedding,
$$\|T(B_1)-T(B_2)\|_{L^{p}(\IT^m)} \leq  c^* \|G_{\pi_{1}}-G_{\pi_{2}}\|_{L^{p}(\IT^m)} \leq  \ 2 c^*\|\mathcal F\|_{\infty} C^4 \|B_1-B_2\|_{L^{p}(\IT^m)}\,,$$
which proves that $T$ is a Lipschitz function on $(E,\|\cdot\|_{L^{p}(\IT^m)})$.\\

\begin{rem}
\nv{In the particular case where $\|\mathcal{F} \|_{\infty}$ is small enough so that  $2 c^*\|\mathcal F\|_{\infty} C^4<1$, we directly get that $T$ is a contraction of the $L^p$-norm, which yields the existence and uniqueness of a fixed-point.}
\end{rem}

\bigskip

\noindent{\textbf{Compactness}}. Fix $B\in E$ and let $\pi=\pi_B$ to alleviate notations. For $z\in \mathbb R^m$, $\tau_z$ commutes with the Helmholtz projection so that, using \cite[Lemma 15.13]{Ambrosio},
\[\|\tau_z\mathsf{P}_{L^{2}(\lambda)} (G_{\pi}) - \mathsf{P}_{L^{2}(\lambda)} (G_{\pi})\|_{L^{p}(\IT^m)} \ = \ \|\mathsf{P}_{L^{2}(\lambda)} (\tau_z G_{\pi}-  G_{\pi})\|_{L^{p}(\IT^m)} \ \leq \ c^* \|\tau_z G_{\pi}-  G_{\pi}\|_{L^{p}(\IT^m)}\,.\]
Hence, in both the ABF and PABF cases, for all $z\in\mathbb R^m$,
\[\|\tau_z T(B) - T(B)\|_{L^{p}(\IT^m)} \ \leq \ c^* \|\tau_z G_\pi - G_\pi\|_{L^{p}(\IT^m)}\,.\]
Now, for all $x\in \mathbb T^m$ and $z\in \mathbb R^m$, using the same argument as in the proof of the continuity of $T$,
\begin{align*}
 |G_{\pi}(x+z)-G_{\pi}(x)| &= \left| \dst{\int_{\IT^{n-m}} -\mathcal{F}_{1}(x+z,y) \frac{\pi(x+z,y)}{\pi^{\xi}(x+z)} \, \dd y} -\dst{\int_{\IT^{n-m}} -\mathcal{F}_{1}(x,y) \frac{\pi(x,y)}{\pi^{\xi}(x)} \, \dd y} \right|\\
 &\leq \left|\dst{\int_{\IT^{n-m}}\left(-\mathcal{F}_{1}(x+z,y)+ \mathcal{F}_{1}(x,y) \right) \frac{\pi(x+z,y)}{\pi^{\xi}(x+z)} \, \dd y}\right| \\
 &\quad +\left|\dst{\int_{\IT^{n-m}} -\mathcal{F}_{1}(x,y) \left(\frac{\pi(x+z,y)}{\pi^{\xi}(x+z)}-\frac{\pi(x,y)}{\pi^{\xi}(x)}\right) \, \dd y} \right|\\
& \leq   |z|\| \nabla \mathcal F\|_{\infty} +  \|\mathcal F\|_\infty \displaystyle{\int_{\mathbb{T}^{n-m}}\left| \frac{ \pi(x+z,y)}{\pi^\xi(x+z)} - \frac{\pi(x,y)}{\pi^\xi(x)} \right | \, \dd y} \\
  & \leq  |z|\| \nabla \mathcal F\|_{\infty} + 2  \|\mathcal F\|_\infty   C^3\int_{\mathbb{T}^{n-m}}   |\pi(x+z,y)-\pi(x,y)|  \dd y\,\\
  &\leq  |z|\| \nabla \mathcal F\|_{\infty} + 2  \|\mathcal F\|_\infty   C^3 \|\tau_z \pi -\pi\|_{L^{2}(\IT^n)},
\end{align*}
where $C$ stems from ~\eqref{eq:Dem-Thm-Existence-bound-inv-mes} and ~\eqref{eq:Dem-Thm-Existence-bound-diff-mes}.
To bound the last term, write
\begin{align*}
\int_{\mathbb{T}^{n}}   |\pi(x+z,y)-\pi(x,y)|^2 \dd x  \dd y & =  \int_{\mathbb{T}^{n}}   \left|\int_{0}^1 z \cdot \nabla_{x} \pi(x+sz,y) \dd s\right|^2  \dd x \dd y\\
& \leq    \int_{0}^1 \int_{\mathbb{T}^{n}}   | z|^2  | \nabla_{x} \pi(x+sz,y) |^2  \dd x \dd y \dd s \\
& =  |z|^2 \|\nabla_{x} \pi\|_2^2\\
&\leq |z|^{2} \|\nabla \pi\|_{2}^{2}\,.
\end{align*}
As a conclusion, using ~\eqref{eq:Dem-Thm-Existence-bound-inv-mes}:
\[\|\tau_z G_\pi - G_\pi\|_{L^{p}(\IT^n)}  \ \leq  \  |z| \left( \| \nabla \mathcal F\|_{\infty}     + 2\|\mathcal F\|_\infty    C^4\right)\,, \]
so that \eqref{eq:Frechet-Kol} holds.

Consequently, there exists an equilibrium measure $\pi_{\infty}^{\mathcal{F}}$ which is continuous and positive, along with an associated bias $B_{\infty}^{\mathcal{F}}$. By   \nv{Proposition~\ref{Prop:driftLp}, one has positive upper and lower bounds on $\pi_{\infty}^{\mathcal{F}}$ and, relying on the Holley-Stroock perturbation result} \cite[Proposition 5.1.6]{BakryGentilLedoux}, $\pi_{\infty}^{\mathcal{F}}$ satisfies $LSI(R)$ for some $R>0$ and the conditional densities $y\mapsto \pi_{\infty,x}^{\mathcal{F}}(y):=\pi_{\infty}^{\mathcal{F}}(x,y)/\pi_{\infty}^{\mathcal{F},\xi}(x)$ satisfy $LSI(\rho)$ with some $\rho>0$ uniform with respect to $x \in \IT^m$.
\end{proof}

\subsection{Proof of Proposition~\ref{PropPerturb}}
\label{subsec:perturb-diffusion}
Let us conclude Section~\ref{sec:existence-equilibrium} with the proof of Proposition~\ref{PropPerturb}.
\begin{proof}
Let us consider the PABF algorithm. Again, without loss of generality, we suppose that $\beta = 1$. Fix $V\in\mathcal C^2(\mathbb T^n)$, and define $$\mathfrak{F}=\{(\mathcal F,\pi_\infty^\mathcal F)\in\mathcal C^1(\mathbb T^n, \IR^n)\times\mathcal P(\mathbb T^n)\, |\, \|\mathcal F+\nabla V\|_\infty \leq 1,\ \pi_\infty^\mathcal F\text{ stationary state for~\eqref{EqFokkerPlanckABF}}\}.$$ In particular, for $(\mathcal F,\pi_\infty^\mathcal F)\in\mathfrak F$, $\pi_\infty^\mathcal F$ is the invariant measure of the diffusion \eqref{EqA} on $\mathbb T^n$ with drift $a = \mathcal F + \nabla (H_\mathcal F\circ \xi)$. Moreover,
\begin{align*}
\|\mathcal F +\nabla V\|_\infty \leq 1& \Rightarrow  \|\mathcal F\|_\infty \leq 1+\|\nabla V\|_\infty \\
& \Rightarrow  \|G_{\mathcal F}\|_{\infty} \leq 1+\|\nabla V\|_\infty  .
\end{align*}
By \cite[Lemma 15.13]{Ambrosio}, for all $p\ge 2$, there exists $c^{*}>0$ such that 
$$\|\nabla H_{\mathcal{F}} \|_{L^{p}(\IT^{m})} \leq c^{*} \|G_{\mathcal F}\|_{L^{p}(\IT^{m})} \le c^{*} \left( 1+\|\nabla V\|_\infty \right),$$
which yields, by Minkowski's inequality, for all $p\ge 2$
\begin{equation}
\|\mathcal F + \nabla (H_\mathcal F\circ \xi) \|_{L^{p}(\IT^{n})} \leq \left(c^{*}+1\right) \left(1+\|\nabla V\|_\infty \right).\label{Dem-PropPerturb-Bound1}
\end{equation}
Note on the other hand, that for all $p\ge 2$
\begin{equation}
\|-\nabla V + \nabla (H_\mathcal F\circ \xi) \|_{L^{p}(\IT^{n})} \leq  (1+c^*)\|\nabla V\|_{\infty} +c^*. \label{Dem-PropPerturb-Bound2}
\end{equation}
Denote by $\nu_\mathcal F$ the invariant measure of the diffusion \eqref{EqA} on $\mathbb T^n$ with drift $a = -\nabla V + \nabla (H_\mathcal F\circ \xi)$, in other words
$$\nu_\mathcal F(x,y) \ = \ \frac{1}{Z_{\nu_\mathcal F}} e^{-V(x,y)+H_\mathcal F(x)}\,,\qquad Z_{\nu_\mathcal F} = \int_{\mathbb T^n} e^{-V(u,v)+H_\mathcal F(u)}\dd u \dd v\,.$$
In the rest of the proof $(\mathcal F,\pi_\infty^\mathcal F)\in\mathfrak{F}$ is fixed and we are careful to give bounds which are uniform over $\mathfrak{F}$.
Besides, to alleviate notations, we simply denote by $\pi = \pi_\infty^\mathcal F$, $\nu=\nu_\mathcal F$, $H=H_\mathcal F$ and $G=G_\mathcal F$.

 Given the bounds \eqref{Dem-PropPerturb-Bound1} and \eqref{Dem-PropPerturb-Bound2}, one can apply \nv{Proposition~\ref{Prop:driftLp}} with a drift $a$ equal to either $\mathcal F + \nabla (H_\mathcal F\circ \xi)$ or $-\nabla V +\nabla (H_\mathcal F\circ \xi)$, which are both bounded in $L^{p}(\IT^n)$ for all $p\geq 1$ as shown above. As a consequence, there exists a constant $C>0$ such that for all $(\mathcal{F},\pi) \in \mathfrak{F}$,
$$\|\nu\|_\infty + \|1/\nu\|_\infty + \|\nu\|_{ H^1(\mathbb T^n)} + \|\pi \|_\infty + \|1/\pi\|_\infty + \|\pi\|_{ H^1(\mathbb T^n)} \ \leq \ C,$$
and
\begin{equation}\label{EqProp3nupi}
\|\pi - \nu\|_{L^{2}(\mathbb T^n)}\leqslant C\|\mathcal F + \nabla V\|_{L^{2}(\IT^{n})} \leqslant  C \|\mathcal F + \nabla V\|_\infty\,.
\end{equation}

Notice that $\nu$ has the same conditional laws (given $x$) than the Gibbs measure $\mu$, so that
$$ \nabla A(x) \ = \ \frac{\int_{\mathbb T^{n-m}} \nabla_x V(x,y) e^{-V(x,y)} \dd y}{\int_{\mathbb T^{n-m}}   e^{-V(x,y)} \dd y} \ = \ \int_{\mathbb T^{n-m}} \nabla_x V(x,y) \frac{\nu  (x,y)}{\nu^\xi(x)} \dd y\,.$$
As a consequence,
\begin{align*}
|\nabla A(x) - G(x)| & =  \left| \int_{\mathbb T^{n-m}} \nabla_x V(x,y) \frac{\nu (x,y)}{\nu ^\xi(x)} \dd y + \int_{\mathbb T^{n-m}} \mathcal F_1(x,y) \frac{\pi(x,y)}{\pi^\xi(x)} \dd y \right| \\
& \leq  \|\mathcal F + \nabla V\|_\infty+ \|\nabla V\|_\infty \int_{\mathbb T^{n-m}} \left| \frac{\nu (x,y)}{\nu ^\xi(x)} - \frac{\pi(x,y)}{\pi^\xi(x)}\right| \dd y .
\end{align*}
Using the same argument as in the proof of the continuity of $T$ in Theorem~\ref{ThmExistence} and~\eqref{EqProp3nupi},
\begin{align*}
\int_{\mathbb T^{n-m}} \left| \frac{\nu (x,y)}{\nu ^\xi(x)} - \frac{\pi(x,y)}{\pi^\xi(x)}\right| \dd y & \leq 2C^3 \int_{\mathbb T^{n-m}} \left|  \nu (x,y) -  \pi(x,y) \right| \dd y\\
& \leq  2C^3 \|\nu-\pi\|_{ L^2(\mathbb T^n)}\\
& \leq  2C^4 \|\mathcal F + \nabla V\|_{\infty}\,.
\end{align*}
We have thus obtained that, uniformly over $\mathfrak{F}$,
$$\|\nabla A-G\|_{L^{p}(\IT^{m})} \ \leq \ (1+2\|\nabla V\|_\infty C^4)\|\mathcal F + \nabla V\|_{\infty}\,.$$
Which yields, given \cite[Lemma 15.13]{Ambrosio}:
\begin{equation}
\|\nabla A-\nabla H\|_{L^{p}(\IT^{m})} = \|\mathsf{P}_{L^{2}(\lambda)}\left( \nabla A - G \right) \|_{L^{p}(\IT^{m})} \ \leq c^{*} \|\nabla A-G\|_{L^{p}(\IT^{m})} \le K_{V} \|\mathcal F + \nabla V\|_{\infty}.  \label{eq:PropPerturb-bound-bias}
\end{equation}
with $K_V=c^{*}(1+2\|\nabla V\|_\infty C^4)$.  This concludes the proof of the first point of Proposition~\ref{PropPerturb}. Concerning the second point, first note that
\[\hat I_{\psi} \ =\ \frac{\int_{\mathbb T^n} \psi(x,y) e^{- H (x)} \pi (x,y) \dd x \dd y }{\int_{\mathbb T^n}e^{-  H (x)} \pi (x,y)  \dd x \dd y} \  = \ \frac{\int_{\mathbb T^n} \psi(x,y) e^{-  H (x)} \pi (x,y) \dd x \dd y }{\int_{\mathbb T^m}e^{-  H (x)}   \dd x },\]
where we used Proposition~\ref{PropHistoPlat} to see that since $\pi$ is a stationary state of~ \eqref{EqFokkerPlanckABF},  $\pi^\xi$ is necessarily the uniform measure on $\mathbb T^m$. Notice that this expression is unchanged if $H$ is replaced by $H+c$ for some constant $c>0$. As a consequence, for the remainder of the proof and without loss of generality, we suppose that $H$ is normalised so that $\int_{\IT^m} e^{-H} = 1$.

Using that
\[\int_{\mathbb T^n} \psi \dd \mu \ = \ \frac{Z_\nu}{Z_\mu}\int_{\mathbb T^n} \psi e^{-H\circ\xi }\dd \nu\,,\]
we are led to
\begin{align}
\left| \int_{\mathbb T^n} \psi \dd \mu -\hat I_{\psi} \right|  & = \left| \frac{Z_\nu}{Z_\mu}  \int_{\mathbb T^n} \psi e^{- H\circ\xi } \dd \nu  - \int_{\mathbb T^n} \psi e^{- H\circ\xi } \dd \pi \right|\notag\\
& \leq  \left\|\psi e^{-  H\circ\xi } \right\|_\infty \left( \left| \frac{Z_\nu}{Z_\mu}     -  1 \right| + \|\nu-\pi\|_{L^1(\mathbb T^n)}\right) \notag\\
&\leq \left\|\psi e^{-  H\circ\xi } \right\|_\infty \left( \left| \frac{Z_\nu}{Z_\mu}     -  1 \right| + \|\nu-\pi\|_{ L^2(\mathbb T^n)}\right) \,.
\label{EqProp3Ipsi} 
\end{align}
Besides,
\[\frac{Z_\nu}{Z_\mu} \ = \ \frac{\int_{\mathbb T^n} e^{-(V(x,y)-H (x) )}\dd x \dd y }{\int_{\mathbb T^n} e^{-V(x,y)}\dd x \dd y} \ = \ \frac{\int_{\IT^m} e^{-(A(x)-H (x))}\dd x }{\int_{\IT^m} e^{- A(x)}\dd x} \,.\]

Again, this expression is unchanged if $A$ is replaced by $A+c$ for some constant $c>0$. In the remaining of the proof and without loss of generality, we suppose that $A$ is normalized so that $\int_{\IT^m}A-H=0$. As a consequence, by the Poincaré-Wirtinger inequality~\cite[Part 5.8.1 Theorem 1]{EvansPDE}, there exists a constant $\bar{K}>0$ \nv{(that depends only on $m$ and $p$)} such that:
$$\|A-H\|_{L^p(\IT^m)} \le \bar{K}\|\nabla A -\nabla H \|_{L^{p}(\IT^m)}.$$
Thus, using~\eqref{eq:PropPerturb-bound-bias}:
\begin{align*}
\|A-H\|_{W^{1,p}(\IT^m)}&= \|A-H\|_{L^p(\IT^m)} +\|\nabla A - \nabla H\|_{L^p(\IT^m)}\\
&\leq  (\bar{K}+1)K_{V} \|\mathcal F + \nabla V\|_{\infty}.
\end{align*}
Now, (\ref{Morrey}) yields the existence of $\mathcal{K}>0$ such that
$\|A-H\|_{\infty} \le \mathcal{K} \|A-H\|_{W^{1,p}(\IT^{m})}, $
hence
$$\|A-H\|_{\infty} \le \tilde{K}_{V} \|\mathcal F + \nabla V\|_{\infty},$$
where $\tilde{K}_{V}:= \mathcal{K}(\bar{K}+1)K_{V} =\mathcal{K}(\bar{K}+1)c^{*}(1+2\|\nabla V\|_\infty C^4)$. 
Then, using that $|e^a-1| \leqslant |a|e^{|a|}$ for all $a\in \mathbb R$, for all $x\in \mathbb T^m$,
\[ |e^{-A(x)+H(x)}-1| \ \leqslant \   \tilde{K}_V\|\mathcal F + \nabla V\|_\infty e^{\tilde{K}_{V}\|\mathcal F + \nabla V\|_{\infty}}\,,\]
so that, using the fact that $\dst{\int_{\IT^m} e^{-H}} =1$,
\begin{align*}
\left| \frac{Z_\nu}{Z_\mu}     -  1 \right|=\left|\frac{\int_{\IT^m}e^{-A+H}}{\int_{\IT^m}e^{-A}}-1\right| & \leqslant  \frac{\left|\int_{\IT^m}e^{-A+H}- 1 \right| + \left|\int_{\IT^m}e^{-A}- 1 \right| }{\int_{\IT^m}e^{-A}}\\
& \leqslant  \frac{\int_{\IT^m}\left| e^{-A+H}- 1 \right| + \int_{\IT^m} e^{-H}|e^{-A+H}-1|}{\int_{\IT^m}e^{-H-|H-A|}} \\
& \leqslant   2\tilde{K}_V \|\mathcal F + \nabla V\|_\infty e^{2 \tilde{K}_V\|\mathcal F + \nabla V\|_\infty}\\
&\leq  2 \tilde{K}_V \|\mathcal F + \nabla V\|_\infty e^{2 \tilde{K}_V}.
\end{align*}
Combining this with \eqref{EqProp3nupi} in \eqref{EqProp3Ipsi}, we have obtained that
\begin{align*}
\left| \int_{\mathbb T^n} \psi \dd \mu -\hat I_{\psi} \right| & \leqslant  \frac{\left\|\psi e^{-  H\circ\xi } \right\|_\infty}{\int_{\mathbb T^m} e^{-H}} \left( 2\tilde{K}_V e^{2\tilde{K}_V} + C\right)\|\mathcal F + \nabla V\|_\infty\\
&\leqslant  \frac{\left\|\psi e^{-  A\circ\xi } \right\|_\infty }{\int_{\mathbb T^m} e^{-A} } e^{2\tilde{K}_V\|\mathcal F + \nabla V\|_\infty}\left( 2\tilde{K}_V e^{2\tilde{K}_V} + C\right)\|\mathcal F + \nabla V\|_\infty\\
& \leqslant  \frac{\left\|\psi  \right\|_\infty \left\| e^{-  A  } \right\|_\infty }{\int_{\mathbb T^m} e^{-A} } e^{2\tilde{K}_V}\left( 2K_V e^{2\tilde{K}_V} + C\right)\|\mathcal F + \nabla V\|_\infty\,,
\end{align*}
which yields the conclusion.

\end{proof}

\section{Long-time convergence}\label{sec:long-time-cv}
In Section~\ref{subsec:long-time-intermediate-results} one can find the proof of intermediate results that will prove useful for the proofs of Theorem~\ref{ThmConservative}, Theorem~\ref{ThmCVtempslong}, and Corollary~\ref{CorCvBias}. Said proofs can respectively be found in Section~\ref{subsec:thm-conservative}, Section~\ref{subsec:thm-non-conservative} and Section~\ref{subsec:corollary-cv}. \\

\noindent In all this section, \nv{to alleviate notations, we will denote by $\pi_\infty$ (dropping the $\mathcal F$ superscript)} a stationary measure given by Theorem~\ref{ThmExistence}. First and foremost, let us introduce the concept of total entropy and its macroscopic-microscopic decomposition. We define the \textit{total entropy} as:
$$E(t)=\mathcal{H}(\pi_{t}|\pi_{\infty}).$$
In the same manner, the entropy between the marginals in $x \in \IT^{m}$ (called \textit{macroscopic entropy} henceforth) is given by:
$$E_{M}(t)=\mathcal{H}(\pi^{\xi}_{t}|\pi^{\xi}_{\infty}).$$
Note that accordingly, one can define the \textit{macroscopic Fisher information}:
$$I_{M}(t)=I(\pi^{\xi}_{t}|\pi^{\xi}_{\infty}).$$
The entropy between the conditional measures at a given $x \in \mathbb{T}^{m}$ (called \textit{local entropy} in the following) is:
$$e_{m}(t,x)= \mathcal{H}(\pi_{t,x} | \pi_{\infty,x}),$$
where $\pi_{t,x}(.)=\displaystyle{\frac{\pi_{t}(x,.)}{\pi_{t}^{\xi}(x)}}$ and  $\pi_{\infty,x}(.)=\displaystyle{\frac{\pi_{\infty}(x,.)}{\pi_{\infty}^{\xi}(x)}}$. Now, let us introduce the so-called \textit{microscopic entropy}:
$$E_{m}(t)= \displaystyle{\int_{\IT^m} \, e_{m}(t,x) \pi_t^{\xi}(x) \dd x}. $$
One has for all $t \ge 0$, $E(t)=E_{m}(t)+E_{M}(t)$  (see \cite[Lemma 1]{LRS07}).\\

Note that we have the following bound on the microscopic entropy:
\begin{align*}
E_{m}(t)=\displaystyle{\int_{\mathbb{T}^{m}} e_{m}(t,x)\, \pi_{t}^{\xi}(x) \, \dd x} &= \displaystyle{\int_{\mathbb{T}^{m}} \mathcal{H}(\pi_{t,x}|\pi_{\infty,x})\, \pi_{t}^{\xi}(x) \, \dd x}\\
&\leq  \frac{1}{2\rho}\displaystyle{\int_{\mathbb{T}^{m}} I(\pi_{t,x}|\pi_{\infty,x})\, \pi_{t}^{\xi}(x) \, \dd x} \quad \text{(using Theorem~\ref{ThmExistence} (ii))}.
\end{align*}
Since $\dst{\nabla_{y} \ln \po\frac{\pi_{t,x}}{\pi_{\infty,x}}\pf=\nabla_{y}\ln \po\frac{\pi_t}{\pi_{\infty}}\pf}$, this leads to

\begin{equation}
E_{m}(t) \leq \frac{1}{2\rho}\displaystyle{\int_{\mathbb{T}^{n}} \left|\nabla_{y}\ln \po\frac{\pi_t}{\pi_{\infty}}\pf\right|^{2} \,\pi_t  }. \label{BoundEm}
\end{equation}

\subsection{Intermediate results}
\label{subsec:long-time-intermediate-results}

The proofs of both Theorems~\ref{ThmConservative} and \ref{ThmCVtempslong} will rely on the following intermediate results. Assumptions~\ref{AssuCINI} and~\ref{HypoGeneral} are enforced. Here, $\mathcal{F}$ can be either conservative ($\mathcal{F}=-\nabla V$) or not\nv{, and $\left(\pi_{\infty},B_\infty ,G_\infty\right)$   denotes a stationary state of~\eqref{EqFokkerPlanckABF}, with $R,\rho$ the corresponding constants given by Theorem~\ref{ThmExistence}}.

\begin{lem}[Bound on $G_{t}(x)-G_\infty(x)$]\label{lem:BoundBias}
For all $t \ge 0$ and $x \in \IT^m$:
\begin{equation*}
 |G_{t}(x)-G_\infty(x)| \leq M \sqrt{\frac{2}{\rho} e_{m}(t,x)}. 
\end{equation*}
\end{lem}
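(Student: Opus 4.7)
The plan is to rewrite $G_{t}(x)-G_{\infty}(x)$ as a difference of expectations against the two conditional densities $\pi_{t,x}$ and $\pi_{\infty,x}$, control this difference by a Wasserstein distance via the Lipschitz regularity of $\mathcal{F}_{1}$, and then invoke the Talagrand inequality supplied by Theorem~\ref{ThmExistence}~(ii).

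First, since $\pi_{t,x}$ and $\pi_{\infty,x}$ are both probability densities on $\IT^{n-m}$, the definition of $G_{t}$ (and similarly for $G_{\infty}$) yields
\[
G_{t}(x) - G_{\infty}(x) \;=\; -\int_{\IT^{n-m}} \mathcal{F}_{1}(x,y)\,\bigl[\pi_{t,x}(y)-\pi_{\infty,x}(y)\bigr]\,\dd y.
\]
I would then take $(Y^{1},Y^{2})$ to be an optimal coupling of $\pi_{t,x}$ and $\pi_{\infty,x}$ for the quadratic Wasserstein cost, so that $W_{2}^{2}(\pi_{t,x},\pi_{\infty,x})=\IE\bigl[|Y^{1}-Y^{2}|^{2}\bigr]$. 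Invoking the $M$-Lipschitz hypothesis on $\mathcal{F}_{1}$ from Assumption~\ref{HypoGeneral}, followed by Cauchy--Schwarz,
\[
|G_{t}(x)-G_{\infty}(x)| \;=\; \bigl|\IE[\mathcal{F}_{1}(x,Y^{1})-\mathcal{F}_{1}(x,Y^{2})]\bigr| \;\leq\; M\,\IE\bigl[|Y^{1}-Y^{2}|\bigr] \;\leq\; M\,W_{2}(\pi_{t,x},\pi_{\infty,x}).
\]

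To finish, I would invoke Theorem~\ref{ThmExistence}~(ii), which ensures that $\pi_{\infty,x}$ satisfies $LSI(\rho)$; combined with the Otto--Villani implication recalled in~\eqref{eq:Talagrand-ineq}, this gives the Talagrand transport-entropy inequality
\[
W_{2}^{2}(\pi_{t,x},\pi_{\infty,x}) \;\leq\; \frac{2}{\rho}\,\mathcal{H}(\pi_{t,x}\,|\,\pi_{\infty,x}) \;=\; \frac{2}{\rho}\,e_{m}(t,x).
\]
Chaining the two estimates yields exactly the announced bound. No step presents a genuine obstacle, since all the heavy lifting (existence of the stationary measure, positivity, and the log-Sobolev inequality for the conditionals) has already been done in Theorem~\ref{ThmExistence}; the only point deserving attention is that the Lipschitz regularity used in the coupling bound controls variations of $\mathcal{F}_{1}$ in the conditional variable $y$, with $x$ kept frozen.
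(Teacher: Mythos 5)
Your proof is correct and follows essentially the same route as the paper's: couple the two conditional laws $\pi_{t,x}$ and $\pi_{\infty,x}$, use the $M$-Lipschitz bound on $\mathcal F_1$ together with Cauchy--Schwarz to obtain $M\,W_2(\pi_{t,x},\pi_{\infty,x})$, then apply the Talagrand inequality~\eqref{eq:Talagrand-ineq} implied by the log-Sobolev inequality of Theorem~\ref{ThmExistence}~(ii). Your closing remark is well taken: the coupling step requires $y\mapsto\mathcal F_1(x,y)$ to be $M$-Lipschitz, which is exactly how the paper's proof invokes Assumption~\ref{HypoGeneral}, even though that assumption is literally phrased as Lipschitz continuity in $x$.
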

\begin{proof}Note that, given Theorem~\ref{ThmExistence} (ii), since $\pi_{\infty,x}$ satisfies a log-Sobolev inequality with constant~$\rho>0$, it also satisfies a Talagrand inequality with constant $\rho$. Now, let $ x \in \mathbb{T}^m$ and $ \nu_{x} \in \Pi(\pi_{t,x},\pi_{\infty,x})$ be a coupling measure. Then, one has:
\begin{align*}
G_{t}(x)-  G_\infty(x) &= \displaystyle{\int_{\mathbb{T}^{n-m}} (-\mathcal{F}_{1}(x,y)+\mathcal{F}_{1}(x,y')) \, \nu_{x}(\dd y, \dd y') }\\
&\leq  M  \displaystyle{\int_{\mathbb{T}^{n-m}}|y-y'|\, \nu_{x}(\dd y, \dd y') } \quad \text{(by Assumption \ref{HypoGeneral})}\\
&\leq  M  \left(\displaystyle{\int_{\mathbb{T}^{n-m}}|y-y'|^{2}\, \nu_{x}(\dd y, \dd y') }\right)^{\frac{1}{2}}.
\end{align*}
Taking the infinimum over $\Pi(\pi_{t,x},\pi_{\infty,x})$ yields:
\begin{align*}
G_{t}(x)-  G_\infty(x) &\leq  M  W_2(\pi_{t,x},\pi_{\infty,x})\quad \\
&\leq  M    \sqrt{\frac{2}{\rho}\mathcal{H}(\pi_{t,x}|\pi_{\infty,x})} \quad (\text{by the Talagrand inequality~\eqref{eq:Talagrand-ineq}}).
\end{align*}
This yields the conclusion, since $\mathcal{H}(\pi_{t,x}|\pi_{\infty,x})=e_{m}(t,x)$.
\end{proof}

\begin{lem}[Total entropy]\label{lem:dEtot}
One has,
\begin{equation*}
\frac{\dd E}{\dd t } =- \displaystyle{\int_{\mathbb{T}^{n}} |\nabla \ln \po\frac{\pi_{t}}{ \pi_{\infty}}\pf|^{2} \, \pi_{t}}+\displaystyle{\int_{\mathbb{T}^{n}} \left(B_{t}- B_{\infty} \right)(x) \cdot \nabla_{x} \ln \po\frac{\pi_{t}}{ \pi_{\infty}}\pf \,\pi_{t}} .
\end{equation*}
\end{lem}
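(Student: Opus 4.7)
The plan is to start from the standard computation
$$\frac{\dd E}{\dd t} \ = \ \int_{\IT^n} (\partial_t \pi_t) \, \ln\!\po\frac{\pi_t}{\pi_\infty}\pf,$$
where the additive constant coming from differentiating $u\ln u$ vanishes thanks to the conservation of mass $\int_{\IT^n}\partial_t \pi_t = 0$. I would then rewrite the Fokker-Planck equation \eqref{EqFokkerPlanckABF} in divergence form $\partial_t \pi_t = \nabla\cdot J_t$, using the probability current
$$J_t \ = \ \nabla \pi_t - \mathcal F \, \pi_t - (B_t,0)\, \pi_t,$$
where $(B_t,0)$ stands for the $\IR^n$-valued vector field equal to $B_t(x)$ on the first $m$ coordinates and zero on the last $n-m$; define $J_\infty$ analogously from $\pi_\infty$ and $B_\infty$. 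The stationarity of $\pi_\infty$ is then precisely $\nabla \cdot J_\infty = 0$. After one integration by parts on the torus (no boundary term),
$$\frac{\dd E}{\dd t} \ = \ -\int_{\IT^n} J_t \cdot \nabla \ln\!\po\frac{\pi_t}{\pi_\infty}\pf.$$

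The main algebraic step is to split $J_t$ into a gradient-like part and a residual involving $J_\infty$. Writing $\nabla \ln \pi_t = \nabla \ln(\pi_t/\pi_\infty) + \nabla \ln \pi_\infty$ inside $J_t = \pi_t[\nabla \ln \pi_t - \mathcal F - (B_t,0)]$ and using the analogous expression for $J_\infty$, a direct manipulation yields the identity
$$J_t \ = \ \pi_t \, \nabla \ln\!\po\frac{\pi_t}{\pi_\infty}\pf \ + \ \frac{\pi_t}{\pi_\infty}\, J_\infty \ - \ \pi_t\, (B_t - B_\infty, 0).$$
Substituting this into the expression of $\dd E/\dd t$ produces three contributions: the first one gives the Fisher-information-like term $-\int_{\IT^n} \pi_t\, |\nabla \ln(\pi_t/\pi_\infty)|^2$, the third one matches the biasing term of the statement, and the middle one
$$\int_{\IT^n} \frac{\pi_t}{\pi_\infty}\, J_\infty \cdot \nabla \ln\!\po\frac{\pi_t}{\pi_\infty}\pf$$
has to vanish.

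The crux of the proof is the cancellation of this middle term, which is the only place where the non-reversibility of the dynamics matters (in the conservative setting $J_\infty \equiv 0$ and there is nothing to check). Setting $u = \pi_t/\pi_\infty$ and using the elementary identity $u\,\nabla \ln u = \nabla u$, the integral rewrites as $\int_{\IT^n} J_\infty \cdot \nabla u$; one further integration by parts on the torus, combined with $\nabla \cdot J_\infty = 0$, gives the desired zero. The harder conceptual point is therefore the observation that, although in the general non-conservative case the stationary current $J_\infty$ is nonzero, it is divergence-free and orthogonal, in the appropriate weighted sense, to gradients of functions of $\pi_t/\pi_\infty$, which is exactly what is needed to isolate the bias term $B_t - B_\infty$ driving the non-linear dynamics. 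The remaining manipulations are routine integrations by parts, legitimate under the smoothness and positivity assumptions inherited from Assumption~\ref{AssuCINI} and the formal convention discussed after \eqref{EqFokkerPlanckABF}.
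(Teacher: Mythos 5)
Your proof is correct. The paper's own proof runs the same comparison-to-the-stationary-dynamics argument, but in the generator language: it writes $\frac{\dd E}{\dd t}=\int_{\IT^n} \mathcal L_t\po\ln\po\frac{\pi_t}{\pi_\infty}\pf\pf\pi_t$, splits $\mathcal L_t=\mathcal L_\infty+(\mathcal L_t-\mathcal L_\infty)$, and handles the $\mathcal L_\infty$ part with the diffusion chain rule $\mathcal L_\infty(a(f))=a'(f)\mathcal L_\infty (f)+a''(f)|\nabla f|^2$ applied to $a=\ln$ and $f=\pi_t/\pi_\infty$; the leftover term $\int_{\IT^n} \mathcal L_\infty(\pi_t/\pi_\infty)\,\pi_\infty$ then vanishes by invariance of $\pi_\infty$. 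Your version replaces the chain rule by the explicit decomposition of the probability current, $J_t=\pi_t\nabla\ln\po\frac{\pi_t}{\pi_\infty}\pf+\frac{\pi_t}{\pi_\infty}J_\infty-\pi_t(B_t-B_\infty,0)$ (which I checked is algebraically exact), and the invariance statement $\int_{\IT^n}\mathcal L_\infty(g)\,\pi_\infty=0$ by its dual form $\nabla\cdot J_\infty=0$ combined with $u\nabla\ln u=\nabla u$; these are literally the same cancellation, since $\int_{\IT^n}(\mathcal L_\infty g)\,\pi_\infty=-\int_{\IT^n} J_\infty\cdot\nabla g$. What your formulation buys is that it isolates the only input really needed from the stationary state, namely that its current is divergence-free and hence orthogonal to gradients of functions of $\pi_t/\pi_\infty$ (your observation that $J_\infty\not\equiv 0$ in the non-conservative case, unlike the reversible setting, is exactly the right conceptual point); the price is verifying the current identity by hand rather than quoting the $\Gamma$-calculus identity as the paper does. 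Both arguments are formal to the same degree, relying on the standing regularity and positivity assumptions.
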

\begin{proof}
If $\mathcal{L}_{t}$ denotes the infinitesimal generator of \eqref{F-ABF} and $ \mathcal{L}_{t}'$ its formal adjoint in $L^{2}(\IT^n)$ then the Fokker-Planck equation \eqref{EqFokkerPlanckABF} can be rewritten as follows:
$$\partial_{t} \pi_{t} = \mathcal{L}_{t}'(\pi_{t}).$$

Denote by $\mathcal{L}_{\infty} \nv{=\mathcal F\cdot \nabla + B_\infty\cdot \nabla_x + \Delta}$ the infinitesimal generator associated to the stationary state $\left(\pi_{\infty}, B_{\infty} \right)$. One has:
\begin{align*}
\frac{\dd E}{\dd t } 
&= \displaystyle{\int_{\mathbb{T}^{n}} \partial_{t} \pi_{t} } + \displaystyle{\int_{\mathbb{T}^{n}}  \partial_{t} \pi _{t} \ln \po\frac{\pi_{t}}{\pi_{\infty}}\pf  }\\ 
&=\displaystyle{\int_{\mathbb{T}^{n}} \mathcal{L}_{t}'(\pi_{t}) \ln \po\frac{\pi_{t}}{ \pi_{\infty}}\pf  } \qquad \text{(since $\displaystyle{\int_{\mathbb{T}^{n}} \partial_{t} \pi_{t} }=0$)}\\
&=\displaystyle{\int_{\mathbb{T}^{n}} \mathcal{L}_{t}\left(\ln \po \frac{\pi_{t}}{ \pi_{\infty}}\pf\right)\pi_{t}   }\\
&=\displaystyle{\int_{\mathbb{T}^{n}} \left(\mathcal{L}_{\infty}+\mathcal{L}_{t}-\mathcal{L}_{\infty}\right)\left(\ln\po\frac{\pi_{t}}{ \pi_{\infty}}\pf\right)\pi_{t}   }\\
&= \displaystyle{\int_{\mathbb{T}^{n}}\mathcal{L}_{\infty}\left(\ln\po \frac{\pi_{t}}{ \pi_{\infty}}\pf\right) \pi_{t}}+ \displaystyle{\int_{\mathbb{T}^{n}}\left(\mathcal{L}_{t}-\mathcal{L}_{\infty}\right)\left(\ln \po\frac{\pi_{t}}{ \pi_{\infty}}\pf\right)\pi_{t}  }.
\end{align*}
Since $\mathcal{L}_{\infty}$ is the infinitesimal generator of a diffusion, it follows that, for any given functions $a$ and $f$:
$$\mathcal{L}_{\infty}(a(f))=a'(f)\mathcal{L}_{\infty}(f)+a"(f) |\nabla f|^{2},$$
as mentioned in \cite[Part 2.3]{MonmarcheGammaCalcul}. Applying this with $a(.)=\ln (.)$ and $f=\displaystyle{\frac{\pi_{t}}{ \pi_{\infty}}}$ we respectively obtain:
\begin{align*}
\displaystyle{\int_{\mathbb{T}^{n}}\mathcal{L}_{\infty}\po \ln \po \frac{\pi_{t}}{ \pi_{\infty}}\pf \pf \pi_{t}}&=\displaystyle{\int_{\mathbb{T}^{n}} \left( \frac{\pi_{\infty}}{ \pi_{t}}\cdot \mathcal{L}_{\infty}\po\frac{\pi_{t}}{ \pi_{\infty}}\pf - \left(\frac{\pi_{\infty}}{ \pi_{t}} \right)^{2}\cdot \left|\nabla \frac{\pi_{t}}{ \pi_{\infty}} \right|^{2} \right) \, \pi_t}\\
&= \displaystyle{\int_{\mathbb{T}^{n}}\mathcal{L}_{\infty}\left(\frac{\pi_{t}}{ \pi_{\infty}}\right)\, \pi_{\infty}} - \displaystyle{\int_{\mathbb{T}^{n}} \left|\nabla \ln \po\frac{\pi_{t}}{ \pi_{\infty}}\pf\right|^{2} \, \pi_{t}}\\
&=- \displaystyle{\int_{\mathbb{T}^{n}} \left|\nabla \ln \po \frac{\pi_{t}}{ \pi_{\infty}}\pf\right|^{2} \, \pi_{t}} \qquad \text{(since $\pi_{\infty}$ is invariant for $\mathcal{L}_{\infty}$)}
\end{align*}
and $$\displaystyle{\int_{\mathbb{T}^{n}}\left(\mathcal{L}_{t}-\mathcal{L}_{\infty}\right)\left(\ln \po\frac{\pi_{t}}{ \pi_{\infty}}\pf\right)\pi_{t}  }=\displaystyle{\int_{\mathbb{T}^{n}} (B_{t}-B_{\infty}) \cdot \nabla_{\nv{x}} \ln \po\frac{\pi_{t}}{ \pi_{\infty}}\pf \,\pi_{t}},$$
which concludes the proof.
\end{proof}

\subsection{Proof of Theorem \ref{ThmConservative}}
\label{subsec:thm-conservative} 

Let us prove the convergence of the ABF and PABF algorithms in the conservative case, namely when $\mathcal{F}=-\nabla V$. In that case $\pi_\infty = \mu_A$ is invariant by \eqref{EqFokkerPlanckABF} (recall $\mu_A$ is given by~\eqref{eq:mu-FE}), with a corresponding $G_\infty = \nabla A$, so that $B_\infty = \nabla A$ in both the ABF and PABF case.


\begin{lem}\label{lem:ExpressionGt-B}
In the conservative case ($\mathcal{F}=-\nabla V$ and $\pi_\infty = \mu_A$), for all $t \ge 0$ and $x \in \IT^m$:
 \begin{equation*}
G_{t}(x)-\nabla A(x)=\displaystyle{\int_{\mathbb{T}^{n-m}} \nabla_{x} \ln \po\frac{\pi_{t}(x,y)}{\pi_{\infty}(x,y)}\pf \frac{\pi_{t}(x,y)}{\pi^{\xi}_{t}(x)} \, \dd y} - \nabla_{x} \ln \po\frac{\pi^{\xi}_{t}(x)}{\pi_{\infty}^{\xi}(x)}\pf. 
\end{equation*}
\end{lem}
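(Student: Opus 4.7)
My plan is to compute both sides directly, starting from the explicit form of $\pi_\infty = \mu_A$ and exploiting the fact that its image by $\xi$ is uniform.

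First I would rewrite $G_t$ in terms of $\nabla_x V$. Since $\mathcal F = -\nabla V$, one has $\mathcal F_1 = -\nabla_x V$, and hence
\[G_t(x) \ = \ \int_{\mathbb T^{n-m}} \nabla_x V(x,y)\,\frac{\pi_t(x,y)}{\pi_t^\xi(x)}\,\dd y\,.\]
Next, using $\pi_\infty(x,y) = \mu_A(x,y) = Z_{\mu_A}^{-1}e^{-V(x,y)+A(x)}$ (recall $\beta=1$), taking logarithmic gradient in $x$ gives the pointwise identity $\nabla_x V(x,y) = \nabla A(x) - \nabla_x \ln \pi_\infty(x,y)$. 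Substituting yields, after pulling the $\nabla A(x)$-term out of the integral (since $\int \pi_t/\pi_t^\xi \,\dd y = 1$),
\[G_t(x) - \nabla A(x) \ = \ -\int_{\mathbb T^{n-m}} \nabla_x \ln \pi_\infty(x,y)\,\frac{\pi_t(x,y)}{\pi_t^\xi(x)}\,\dd y\,.\]

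Then I would expand the right-hand side of the claimed identity. Writing $\nabla_x \ln(\pi_t/\pi_\infty) = \nabla_x \ln \pi_t - \nabla_x \ln \pi_\infty$ and integrating the first piece against $\pi_t/\pi_t^\xi$, the key computation is
\[\int_{\mathbb T^{n-m}} \nabla_x \ln \pi_t(x,y)\,\frac{\pi_t(x,y)}{\pi_t^\xi(x)}\,\dd y \ = \ \frac{1}{\pi_t^\xi(x)}\int_{\mathbb T^{n-m}} \nabla_x \pi_t(x,y)\,\dd y \ = \ \frac{\nabla_x \pi_t^\xi(x)}{\pi_t^\xi(x)} \ = \ \nabla_x \ln \pi_t^\xi(x),\]
where differentiation under the integral is justified by the smoothness of $\pi_t$ (Assumption~\ref{AssuCINI} and elliptic regularity for~\eqref{EqFokkerPlanckABF}).

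Combining this with the term $-\nabla_x \ln(\pi_t^\xi/\pi_\infty^\xi)$, the two occurrences of $\nabla_x \ln \pi_t^\xi$ cancel, and the right-hand side reduces to
\[-\int_{\mathbb T^{n-m}} \nabla_x \ln \pi_\infty(x,y)\,\frac{\pi_t(x,y)}{\pi_t^\xi(x)}\,\dd y \ + \ \nabla_x \ln \pi_\infty^\xi(x)\,.\]
To conclude, it remains to note that $\nabla_x \ln \pi_\infty^\xi \equiv 0$: indeed, by \eqref{BGXi} applied to $\mu_A$ (equivalently by Proposition~\ref{PropHistoPlat} which asserts flat histogram at equilibrium), $\pi_\infty^\xi = \mu_A^\xi$ is the constant density on $\mathbb T^m$. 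There is no real obstacle here — the only point to be slightly careful about is this last cancellation, which is specific to the conservative case where the stationary marginal in $\xi$ is genuinely uniform (without flat-histogram this identity would pick up an extra term).
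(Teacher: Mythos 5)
Your proof is correct and follows essentially the same route as the paper's: both expand $\nabla_x\ln(\pi_t/\pi_\infty)$, use $\int_{\mathbb T^{n-m}}\nabla_x\pi_t\,\dd y=\nabla_x\pi_t^\xi$ to cancel the $\nabla_x\ln\pi_t^\xi$ terms, invoke $\pi_\infty^\xi\equiv 1$ (i.e.\ $\mu_A^\xi$ uniform, from \eqref{BGXi}), and finish with the explicit form $\nabla_x\ln\pi_\infty=-\nabla_x V+\nabla A$. The only cosmetic difference is the direction of the computation (the paper starts from the right-hand side), so nothing further to add.
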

\begin{proof} Knowing that $\pi_{\infty}^{\xi}=1$, one has, for a fixed $x$ in $\IT^m$:
\begin{align*}
\lefteqn{\displaystyle{\int_{\mathbb{T}^{n-m}} \nabla_{x} \ln \po\frac{\pi_{t}}{\pi_{\infty}}\pf \frac{\pi_{t}}{\pi^{\xi}_{t}} \, \dd y} - \nabla_{x} \ln \po\frac{\pi^{\xi}_{t}}{\pi_{\infty}^{\xi}}\pf}\\
&= \displaystyle{\int_{\mathbb{T}^{n-m}}  \frac{\nabla_{x}\pi_{t}}{\pi_{t}}\cdot \frac{\pi_{t}}{\pi^{\xi}_{t}}\, \dd y}-\displaystyle{\int_{\mathbb{T}^{n-m}} \frac{\nabla_{x}\pi_{\infty}}{\pi_{\infty}}\cdot \frac{\pi_{t}}{\pi^{\xi}_{t}} \, \dd y} - \frac{\nabla_{x}\pi^{\xi}_{t}}{\pi^{\xi}_{t}}+\partial_{x} \ln (1)\\
&= \frac{\nabla_{x}\pi^{\xi}_{t}}{\pi^{\xi}_{t}}-\displaystyle{\int_{\mathbb{T}^{n-m}} \frac{\nabla_{x}\pi_{\infty}}{\pi_{\infty}}\cdot \frac{\pi_{t}}{\pi^{\xi}_{t}} \, \dd y} - \frac{\nabla_{x}\pi^{\xi}_{t}}{\pi^{\xi}_{t}}\\
&= -\displaystyle{\int_{\mathbb{T}^{n-m}} \nabla_{x}\left(-V(x,y)+A(x) \right) \cdot \frac{\pi_{t}}{\pi^{\xi}_{t}} \, \dd y}\\
&= \displaystyle{\int_{\mathbb{T}^{n-m}} \nabla_{x} V(x,y)  \cdot \frac{\pi_{t}}{\pi^{\xi}_{t}} \, \dd y} -\displaystyle{\int_{\mathbb{T}^{n-m}} \nabla A(x)  \cdot \frac{\pi_{t}}{\pi^{\xi}_{t}} \, \dd y} \\
&= G_{t}(x) -\nabla A(x).
\end{align*}

\end{proof}
In the following proofs, an integral over $\IT^n$ is with respect to $(x,y) \in \IT^m \times \IT^{n-m}$, an integral over $\IT^m$ is with respect to $x \in \IT^{m}$, and an integral over $\IT^{n-m}$ is with respect to $y \in \IT^{n-m}$.

\begin{proof}[Proof of Theorem~\ref{ThmConservative}]\

\noindent\textbf{Step 1:} Since for all $t\geq 0$, $E(t)=E_{m}(t)+E_{M}(t)$, using \eqref{dEM} and Lemma~\ref{lem:dEtot}, one has:
\begin{align*}
\frac{\dd E_{m}}{\dd t } &= - \displaystyle{\int_{\mathbb{T}^{n}} \left|\nabla \ln \po \frac{\pi_{t}}{ \pi_{\infty}}\pf\right|^{2} \, \pi_{t}}+\displaystyle{\int_{\mathbb{T}^{n}} \left(B_{t}- \nabla A \right) \cdot \nabla_{x} \ln \po \frac{\pi_{t}}{ \pi_{\infty}}\pf \,\pi_{t}} + \displaystyle{\int_{\mathbb{T}^{m}} \left|\nabla_{x} \ln \po \frac{\pi^{\xi}_{t}}{\pi_{\infty}^{\xi}}\pf\right|^{2} \, \pi^{\xi}_{t}}\\
&= - \displaystyle{\int_{\mathbb{T}^{n}} \left|\nabla \ln \po\frac{\pi_{t}}{ \pi_{\infty}}\pf\right|^{2} \, \pi_{t}}  + \displaystyle{\int_{\mathbb{T}^{n}} \left(G_{t}- \nabla A \right) \cdot \nabla_{x} \ln \po\frac{\pi_{t}}{ \pi_{\infty}}\pf \,\pi_{t}} + \displaystyle{\int_{\mathbb{T}^{m}} \left|\nabla_{x} \ln \po\frac{\pi^{\xi}_{t}}{\pi_{\infty}^{\xi}}\pf \right|^{2} \, \pi^{\xi}_{t}}+J_t
\end{align*}
where $$J_t:= \int_{\mathbb{T}^{n}} \left(B_{t}-G_{t} \right) \cdot \nabla_{x} \ln \po \frac{\pi_{t}}{ \pi_{\infty}}\pf \,\pi_{t} .$$ 
 Now, using Lemma~\ref{lem:ExpressionGt-B}, one gets:
\begin{align*}
\frac{\dd E_{m}}{\dd t } & =  - \displaystyle{\int_{\mathbb{T}^{n}} \left|\nabla \ln \po\frac{\pi_{t}}{ \pi_{\infty}}\pf\right|^{2} \, \pi_{t}} + \displaystyle{\int_{\mathbb{T}^{n}} \int_{\mathbb{T}^{n-m}} \nabla_{x} \ln \po\frac{\pi_{t}}{\pi_{\infty}}\pf \frac{\pi_{t}}{\pi^{\xi}_{t}} \, \dd y \cdot \nabla_{x} \ln \po \frac{\pi_{t}}{ \pi_{\infty}} \pf\,\pi_{t}} \\
&-\displaystyle{\int_{\mathbb{T}^{n}}\nabla_{x} \ln \po\frac{\pi^{\xi}_{t}}{\pi_{\infty}^{\xi}}\pf\cdot \nabla_{x} \ln \po \frac{\pi_{t}}{ \pi_{\infty}}\pf \,\pi_{t}} 
  +\displaystyle{\int_{\mathbb{T}^{m}} \left|\nabla_{x} \ln \po \frac{\pi^{\xi}_{t}}{\pi_{\infty}^{\xi}}\pf\right|^{2} \, \pi^{\xi}_{t}} + J_t.
\end{align*}
On the one hand, using Cauchy-Schwarz's inequality, the first terms in the right-hand side can be bounded as follows:
\begin{align*}
\displaystyle{\int_{\mathbb{T}^{n}} \int_{\mathbb{T}^{n-m}} \nabla_{x} \ln \po\frac{\pi_{t}}{\pi_{\infty}}\pf \frac{\pi_{t}}{\pi^{\xi}_{t}} \, \dd y \cdot \nabla_{x} \ln \po\frac{\pi_{t}}{ \pi_{\infty}}\pf \,\pi_{t}} &= \dst{\int_{\IT^{m}} \left|\int_{\IT^{n-m}} \nabla_{x} \ln \left(\frac{\pi_{t}}{\pi_{\infty}}\right) \pi_{t}\right|^{2} \frac{1}{\pi_{t}^{\xi}}}\\
&\leq \dst{\int_{\IT^{n}} \left|\nabla_{x} \ln \left(\frac{\pi_{t}}{\pi_{\infty}}\right)\right|^{2} \pi_{t}}.
\end{align*}
On the other hand, factorising the two next terms in the right-hand side, and using again Lemma~\ref{lem:ExpressionGt-B} gives:
\begin{align*}
\lefteqn{-\displaystyle{\int_{\mathbb{T}^{n}}\nabla_{x} \ln \po\frac{\pi^{\xi}_{t}}{\pi_{\infty}^{\xi}}\pf\cdot \nabla_{x} \ln \po\frac{\pi_{t}}{ \pi_{\infty}}\pf \,\pi_{t}} +\displaystyle{\int_{\mathbb{T}^{m}} \left|\nabla_{x} \ln \po\frac{\pi^{\xi}_{t}}{\pi_{\infty}^{\xi}}\pf\right|^{2} \, \pi^{\xi}_{t}}}\\
 &= \int_{\mathbb{T}^{m}} \nabla_{x} \ln \po\frac{\pi^{\xi}_{t}}{\pi_{\infty}^{\xi}}\pf \cdot\left(\nabla_{x} \ln \po\frac{\pi^{\xi}_{t}}{\pi_{\infty}^{\xi}}\pf  -\int_{\mathbb{T}^{n-m}} \nabla_{x} \ln \po\frac{\pi_{t}}{\pi_{\infty}}\pf \frac{\pi_{t}}{\pi^{\xi}_{t}}  \right) \pi^{\xi}_{t} \\
&= \displaystyle{\int_{\mathbb{T}^{m}} \nabla_{x} \ln \po\frac{\pi^{\xi}_{t}}{\pi_{\infty}^{\xi}}\pf (\nabla A-G_{t}) \pi^{\xi}_{t} }.
\end{align*}
Using once again the Cauchy-Schwarz's inequality, one gets:
\begin{align*}
\frac{\dd E_{m}}{\dd t } &\leq  - \displaystyle{\int_{\mathbb{T}^{n}} \left|\nabla_{y} \ln \po\frac{\pi_{t}}{ \pi_{\infty}}\pf\right|^{2} \, \pi_{t}} + \left( \displaystyle{\int_{\mathbb{T}^{m}}  \left|\nabla A-G_{t}\right|^{2} \pi^{\xi}_{t} }\right)^{\frac{1}{2}} \left( \displaystyle{\int_{\mathbb{T}^{m}}\left|\nabla_{x} \ln \po\frac{\pi^{\xi}_{t}}{\pi_{\infty}^{\xi}}\pf\right|^{2} \pi^{\xi}_{t} }\right)^{\frac{1}{2}}+J_t.
\end{align*}
Eventually, recalling that $ \int_{\mathbb{T}^{m}}|\nabla_{x} \ln \po\frac{\pi^{\xi}_{t}}{\pi_{\infty}^{\xi}}\pf|^{2} \pi^{\xi}_{t} =I_{M}(t)$ and relying on \eqref{BoundEm} and Lemma~\ref{lem:BoundBias}, one has, for all $\varepsilon>0$ and all $t \geq 0$:
\begin{align}
\frac{\dd E_{m}}{\dd t } &\leq -2\rho E_{m}(t) + M \sqrt{\frac{2}{\rho}} \sqrt{E_{m}(t)} \sqrt{I_{M}(t)}  + J_t \nonumber\\
&\leq  -2\rho E_{m}(t) +  2\sqrt{\varepsilon \rho E_{m}(t)} \sqrt{\frac{M^{2}}{2\rho^{2} \varepsilon}I_{M}(t)}  + J_t \nonumber \\
&\leq -(2-\varepsilon) \rho E_{m}(t) + \frac{M^{2}}{2\rho^{2}\varepsilon} I_{M}(t)+J_t. \label{DemThmConservativeBoundEm}
\end{align}

\noindent \textbf{Step 2:} In order to set the idea of the proof, let us first treat the case of the ABF algorithm, where one simply has $J_t=0$ for all $t \ge 0$. Inequality~\eqref{DemThmConservativeBoundEm} yields, for all $\varepsilon>0$ and all $ t \ge 0$:
\begin{align*}
\frac{\dd E_{m}}{\dd t } &\leq -(2-\varepsilon) \rho E_{m}(t) + \frac{M^{2}}{2\rho^{2}\varepsilon} I_{M}(t),
\end{align*}
and using Gronwall's lemma, one has,  for all $\varepsilon>0$ for all $t \geq 0$, :
\begin{align*}
E_{m}(t) &\leq E_{m}(0)e^{-(2-\varepsilon)\rho t} + \frac{M^2}{2\rho^2 \varepsilon} \dst{\int_{0}^{t} I_{M}(s) e^{-(2-\varepsilon)\rho (t-s)} \, \dd s}.
\end{align*}
\begin{rem} Note that in the ABF case \cite[Lemma 12]{LRS07} or the PABF case with a Helmholtz projection done with respect to the marginal density $\pi_{t}^{\xi}$ \cite[Corollary 1]{Alrachid}, one has the exponential convergence towards zero of the macroscopic Fisher information $I_{M}(t)$. This is not the case when one considers the classical Helmholtz projection with respect to the Lebesgue measure: indeed, the density $\pi_{t}^{\xi}$ does not satisfy the heat equation anymore, but an elliptic equation~\eqref{FP-Pi_xi} with a null-divergence drift. Having no additional information about the regularity of the drift, one cannot prove the convergence of $I_{M}(t)$ towards zero in the long-time limit as done in \cite{Alrachid,LRS07}.
\end{rem}
By Proposition~\ref{PropHistoPlat}, for all $t\geq 0$, $E_{M}(t) \leq E_{M}(0)e^{-8\pi^{2}t}$. Since $I_{M}(t)=-E'_{M}(t)$, one gets:
\begin{equation}
0\leq F(t):=\dst{\int_{t}^{\infty} I_{M}(s) \, \dd s } \leq E_{M}(t)\leq E_{M}(0)e^{-8\pi^{2}t}, \qquad \forall t \geq 0. \label{DemThmConservativeBoundFisher}
\end{equation} 
Consequently, relying on~\eqref{DemThmConservativeBoundFisher} one has, for all $\varepsilon>0$, for all $t \geq 0$:
\begin{align*}
\dst{\int_{0}^{t} I_{M}(s) e^{-(2-\varepsilon)\rho (t-s)} \, \dd s} & = - e^{-(2-\varepsilon) \rho t} \dst{\int_{0}^{t} F'(s) e^{(2-\varepsilon)\rho s} \, \dd s}\\
&=e^{-(2-\varepsilon) \rho t} \left(\dst{\int_{0}^{t} F(s) (2-\varepsilon)\rho e^{(2-\varepsilon)\rho s} \, \dd s} - \left[ F(s)e^{(2-\varepsilon)\rho s}\right]_{0}^{t}\right)\\
&\leq e^{-(2-\varepsilon) \rho t} \left( (2-\varepsilon)\rho E_{M}(0) \dst{\int_{0}^{t} e^{-(8\pi^{2}-(2-\varepsilon)\rho)s} \, \dd s } -F(t)e^{(2-\varepsilon)\rho t} +F(0)\right)\\
&\leq e^{-(2-\varepsilon) \rho t} \left((2-\varepsilon)\rho E_{M}(0) \dst{\int_{0}^{t} e^{-(8\pi^{2}-(2-\varepsilon)\rho)s} \, \dd s } -F(t)e^{(2-\varepsilon)\rho t} +E_{M}(0)\right)\\
&\leq E_{M}(0)e^{-(2-\varepsilon) \rho t}\left((2-\varepsilon)\rho  \dst{\int_{0}^{t} e^{-(8\pi^{2}-(2-\varepsilon)\rho)s} \, \dd s } +1\right)
\end{align*}
We distinguish between two case:
\begin{itemize}
\item[(i)] If $8\pi^{2} = (2-\varepsilon) \rho$, one gets:
\begin{align*}
\dst{\int_{0}^{t} I_{M}(s) e^{-(2-\varepsilon)\rho (t-s)} \, \dd s} &\leq E_{M}(0) e^{-8\pi^{2}t} \left( 8\pi^2 \, t+1\right).
\end{align*}
Since for all $\delta >0$ and all $t \geq 0$, one has $t \leq \frac{e^{-1}}{\delta} e^{\delta t}$, choosing $\delta=\varepsilon$ yields, for all $t\geq 0$:
\begin{align*}
\dst{\int_{0}^{t} I_{M}(s) e^{-(2-\varepsilon)\rho (t-s)} \, \dd s} &\leq E_{M}(0)\left(\frac{8\pi^2}{e\,\varepsilon} \vee 1 \right) e^{-(8\pi^{2}-\varepsilon)t}.
\end{align*}
\item[(ii)] If $8\pi^{2} \neq (2-\varepsilon)\rho$, one gets, in all cases ($8\pi^{2} > (2-\varepsilon)\rho$ or $8\pi^{2}<(2-\varepsilon)\rho$):
\begin{align*}
\dst{\int_{0}^{t} I_{M}(s) e^{-(2-\varepsilon)\rho (t-s)} \, \dd s} &\leq E_{M}(0)\left( \frac{(2-\varepsilon)\rho}{|8\pi^{2}-(2-\varepsilon)\rho|} \vee 1 \right)  e^{-(8\pi^{2} \wedge (2-\varepsilon)\rho)t}.
\end{align*}
\end{itemize} Which yields, 
\begin{align*}
E_{m}(t) &\leq E_{m}(0)e^{-(2-\varepsilon)\rho t} + \frac{M^2}{2\rho^2 \varepsilon} \dst{\int_{0}^{t} I_{M}(s) e^{-(2-\varepsilon)\rho (t-s)} \, \dd s}\\
&\leq \left(E_{m}(0) \vee \frac{M^2}{2\rho^2 \varepsilon}E_{M}(0)   \left( \frac{8\pi^2}{e \varepsilon} \vee \frac{(2-\varepsilon)\rho}{|8\pi^{2} - (2-\varepsilon)\rho|} \vee 1\right) \right)e^{-((8\pi^{2}-\varepsilon)\wedge (2-\varepsilon)\rho)t}.
\end{align*}
\noindent\textbf{Conclusion:}  for the ABF algorithm, we have obtained that for all $\varepsilon>0$, there exists $\mathcal{K}=\mathcal{K}(\varepsilon)>0$ such that for all $t \ge 0$,
$$E_{m}(t) \leq \mathcal{K} e^{-\left((8\pi^{2} \wedge 2\rho)-\varepsilon \right) t},$$
where $\dst{\mathcal{K} =  \left(E_{m}(0) \vee \frac{M^2}{2\rho \varepsilon}E_{M}(0)   \left( \frac{8\pi^2 \rho}{e \varepsilon} \vee \frac{(2\rho-\varepsilon)}{|8\pi^{2} - (2\rho-\varepsilon)|} \vee 1\right) \right)}$. \\

\noindent \textbf{Step 3:} Let us now concentrate on the PABF case, and let us prove an upper bound on $J_t$. For $t\geq 0$, recall the notation $\nabla H_{t}:=\mathsf{P}_{L^{2}(\lambda)}(G_{t})$, so that $B_t = \nabla H_t$. Similarly, let us introduce, for all $t\geq 0$,
$$\nabla \tilde{H}_{t}:= \mathsf{P}_{L^{2}(\pi_{t}^{\xi})}(G_{t}).$$
Recall that $\mathsf{P}_{L^{2}(\nu)}(f)$ stands for the Helmholtz projection of a vector field $f$ with respect to the measure $\nu$.
In the conservative case one has $\pi_{\infty} \varpropto e^{-V+A}$, so that $G_\infty=\nabla A$. Since   $G_\infty$ is a gradient, one has:
$$  \nabla H_\infty= \mathsf{P}_{L^{2}(\lambda)}(G_\infty) = \, \nabla A \, = \mathsf{P}_{L^{2}(\pi_{\infty}^{\xi})}(G_\infty)=\nabla \tilde{H_\infty}  .$$ 
On the contrary, there is no reason  for $\nabla H_{t}$ and $\nabla \tilde{H}_{t}$ to be equal at a fixed time $t>0$. Let us decompose
\begin{align*}
J_t&= \displaystyle{\int_{\mathbb{T}^{n}} (\nabla H_{t}- \nabla \tilde{H}_{t})\cdot  \nabla_{x} \ln\po \frac{\pi_{t}}{ \pi_{\infty}}\pf \,\pi_{t}}+\displaystyle{\int_{\mathbb{T}^{n}} (\nabla \tilde{H}_{t}- G_{t})\cdot \nabla_{x} \ln\po \frac{\pi_{t}}{ \pi_{\infty}}\pf \,\pi_{t}}.
\end{align*}
As proven in \cite[Lemma 6]{Alrachid}, relying on the fact that since $\nabla_{x}\ln \left(\pi_{\infty}\right)= -\nabla \left(V-A\right)$,~$\pi^{\xi}_{\infty} \equiv 1$ and $\nabla \tilde{H}_{t} =\mathsf{P}_{L^{2}(\pi_{t}^{\xi})}(G_t)$, one can show that the last right-hand term is negative. One consequently has:
\begin{align*}
\displaystyle{\int_{\mathbb{T}^{n}} (\nabla H_{t}- \nabla \tilde{H}_{t}) \nabla_{x} \ln\po \frac{\pi_{t}}{ \pi_{\infty}}\pf \,\pi_{t}}&= \displaystyle{\int_{\mathbb{T}^{n}}(\nabla H_{t}- \nabla \tilde{H}_{t})\cdot \nabla_{x} \ln(\pi_{t}) \, \pi_{t}} -\displaystyle{\int_{\mathbb{T}^{n}}(\nabla H_{t}- \nabla \tilde{H}_{t}) \cdot\nabla_{x} \ln(\pi_{\infty}) \,\pi_{t}}\\
&= \displaystyle{\int_{\mathbb{T}^{m}}(\nabla H_{t}- \nabla \tilde{H}_{t}) \cdot \nabla_{x} \pi_{t}^{\xi}} -\displaystyle{\int_{\mathbb{T}^{n}}(\nabla H_{t}- \nabla \tilde{H}_{t}) \cdot \nabla_{x} \ln(\pi_{\infty}) \,\pi_{t}}\\
&=\displaystyle{\int_{\mathbb{T}^{m}}(\nabla H_{t}- \nabla \tilde{H}_{t})\cdot \nabla_{x} \ln\po\frac{\pi^{\xi}_{t}}{ \pi_{\infty}^{\xi}}\pf \, \pi_{t}^{\xi}} -\displaystyle{\int_{\mathbb{T}^{n}}(\nabla H_{t}- \nabla \tilde{H}_{t})\cdot \nabla_{x} \ln(\pi_{\infty}) \,\pi_{t}}.
\end{align*}
Hence, in the PABF case,
\begin{align}
J_t&\leq    \displaystyle{\int_{\mathbb{T}^{m}} (\nabla H_{t}- \nabla \tilde{H}_{t}) \nabla_{x} \ln \po\frac{\pi^{\xi}_{t}}{ \pi_{\infty}^{\xi}}\pf \,\pi^{\xi}_{t}}- \displaystyle{\int_{\mathbb{T}^{n}} (\nabla H_{t}- \nabla \tilde{H}_{t}) \nabla_{x} \ln(  \pi_{\infty}) \,\pi_{t}}\nonumber\\
& =  \displaystyle{\int_{\mathbb{T}^{m}} (\nabla H_{t}- \nabla \tilde{H}_{t}) \nabla_{x} \ln\po\frac{\pi^{\xi}_{t}}{ \pi_{\infty}^{\xi}}\pf \,\pi^{\xi}_{t}}-\displaystyle{\int_{\mathbb{T}^{m}} (\nabla H_{t}- \nabla \tilde{H}_{t}) (\nabla A-G_{t}) \,\pi^{\xi}_{t}} \nonumber\\
&\leq  \left(\displaystyle{\int_{\mathbb{T}^{m}} |\nabla H_{t}- \nabla \tilde{H}_{t}|^{2} \, \pi^{\xi}_{t} } \right) ^{\frac{1}{2}}  \left( \left(\displaystyle{\int_{\mathbb{T}^{m}} \left| \nabla_{x} \ln\po \frac{\pi^{\xi}_{t}}{ \pi_{\infty}^{\xi}}\pf\right|^{2} \, \pi^{\xi}_{t}}\right) ^{\frac{1}{2}}+\left( \displaystyle{\int_{\mathbb{T}^{m}} | \nabla A-G_{t}|^{2} \, \pi^{\xi}_{t} }\right) ^{\frac{1}{2}} \right) \nonumber\\
&\leq  \left(\dst{\int_{\mathbb{T}^{m}} |\nabla H_{t}- \nabla \tilde{H}_{t}|^{2} \, \pi^{\xi}_{t} } \right) ^{\frac{1}{2}}  \left(\sqrt{I_{M}(t)} + M \sqrt{\frac{2}{\rho}} \sqrt{E_{m}(t)} \right). \label{DemThmConservativeBoundJ}
\end{align}

\noindent\textbf{Step 4:} We will now consider times such that $t \geq 1$. Since $\nabla \tilde{H}_{t} =\mathsf{P}_{L^{2}(\pi_{t}^{\xi})}(G_t)$, one has:
$$\displaystyle{\int_{\mathbb{T}^{m}} |\nabla H_{t}-G_{t}|^{2}\pi_{t}^{\xi} }= \displaystyle{\int_{\mathbb{T}^{m}} |\nabla \tilde{H}_{t}-G_{t}|^{2}\pi_{t}^{\xi}  }+\displaystyle{\int_{\mathbb{T}^{m}} |\nabla \tilde{H}_{t}-\nabla H_{t}|^{2}\pi_{t}^{\xi}  },$$
which yields:
\begin{align*}
\displaystyle{\int_{\mathbb{T}^{m}} |\nabla H_{t}- \nabla \tilde{H}_{t}|^{2} \, \pi_{t}^{\xi} } &=\dst{\int_{\IT^m}  |\nabla H_t -G_t|^2 \pi_{t}^{\xi}}-\dst{\int_{\IT^m} |\nabla \tilde{H}_t -G_t|^2  \pi_{t}^{\xi}}\\
&\leq \|\pi_{t}^{\xi}\|_{\infty} \dst{\int_{\IT^m}  |\nabla H_t -G_t|^2 }-\dst{\int_{\IT^m} |\nabla \tilde{H}_t -G_t|^2  \pi_{t}^{\xi}}\\
&\leq  \|\pi_{t}^{\xi}\|_{\infty} \dst{\int_{\IT^m}  |\nabla \tilde{H}_t -G_t|^2 }-\dst{\int_{\IT^m} |\nabla \tilde{H}_t -G_t|^2  \pi_{t}^{\xi}}\\
&\leq \|\pi_{t}^{\xi}\|_{\infty} \left(\left\| 1/\pi^{\xi}_{t} \right\|_{\infty} -1\right) \dst{\int_{\IT^m}  |\nabla \tilde{H}_t -G_t|^2 \pi_{t}^{\xi} }\\
&\leq  \|\pi_{t}^{\xi}\|_{\infty} \left(\left\| 1/\pi^{\xi}_{t} \right\|_{\infty} -1\right) \dst{\int_{\IT^m}  |G_t|^2 \pi_{t}^{\xi} }\\
& \leq  \|\pi_{t}^{\xi}\|_{\infty} \left(\left\|1/\pi^{\xi}_{t} \right\|_{\infty} -1\right) M^2,
\end{align*}
where we used that, under Assumption~\ref{HypoGeneral}, $\|G_{t}\|_{\infty} \leq \|\nabla_{x}V \|_{\infty} \leq M$. Now, from Proposition~\ref{PropHistoUniform}, there exists $C \geq 0$ such that, for all $t\geq 1$:
\[\|\pi^{\xi}_{t}\|_{\infty} \leq 1+Ce^{-4\pi^{2}t}\qquad\text{and}\qquad 
\|\displaystyle{1/\pi^{\xi}_{t}}\|_{\infty} \leq 1+Ce^{-4\pi^{2}t}.\]
This yields the existence of a constant $\tilde{C}>0$ such that, for all $t \geq 1$:
$$\left(\displaystyle{\int_{\mathbb{T}^{m}} |\nabla H_{t}- \nabla \tilde{H}_{t}|^{2} \, \pi^{\xi}_{t} }\right)^{\frac{1}{2}}\leq  \tilde{C} e^{-2\pi^{2}t},$$
and, for all $\varepsilon>0$, for all $t \geq 1$:
\begin{align*}
J_t&\leq   \tilde{C} e^{-2\pi^{2}t} \left( \sqrt{I_{M}}(t) + M \sqrt{\frac{2}{\rho}} \sqrt{E_{m}(t)}\right)\\
&\leq   \tilde{C} e^{-2\pi^{2}t} \left( \sqrt{I_{M}}(t) +  2\sqrt{\frac{M^2}{2\rho^{2} \varepsilon}} \sqrt{ \varepsilon \rho E_{m}(t)}\right)\\
&\leq \varepsilon \rho E_{m}(t) + I_{M}(t) +\left(\frac{\tilde{C}^2}{4}+\frac{M^{2}\tilde{C}^2}{2\rho^{2}\varepsilon}\right)e^{-4\pi^{2}t}.
\end{align*}
Hence one gets:
$$\frac{\dd E_{m}}{\dd t } \leq  -(2-2\varepsilon)\rho E_{m}(t) + K_{1}I_{M}(t)+K_{2}e^{-4\pi^{2}t}, \quad \forall t \geq 1,$$
with $$ K_{1}=K_{1}(\varepsilon)=1+\frac{M^{2}}{2\rho^{2} \varepsilon}, \qquad
K_{2}=K_{2}(\varepsilon)=\frac{\tilde{C}^2}{4}+\frac{M^{2}\tilde{C}^2}{2\rho^{2} \varepsilon}.$$

From now on, let us fix $\varepsilon\in (0,1)$ and denote by $r_{\varepsilon}:= 2(1-\varepsilon)$. Using Gronwall's lemma yields, for all $t \geq 1$:
$$ E_{m}(t) \leq E_{m}(1)e^{r_{\varepsilon}\rho} \, e^{-r_{\varepsilon}\rho t} +\dst{\int_{1}^{t} K_{1}I_{M}(s)e^{-r_{\varepsilon}\rho (t-s)} +K_{2}e^{-4\pi^{2} s -r_{\varepsilon}\rho(t-s)} \, \dd s}.$$
$\bullet$ Let us first consider, for all $t \geq 1$, $I_{1}:=K_{1}\dst{\int_{1}^{t} I_{M}(s)e^{-r_{\varepsilon}\rho (t-s)}}$. As done in Step 2, relying on \eqref{DemThmConservativeBoundFisher}, one has, for all $t \ge 1$:
\begin{align*}
I_{1} &=  -K_{1} e^{-r_{\varepsilon} \rho t} \dst{\int_{1}^{t} F'(s) e^{r_{\varepsilon}\rho s} \, \dd s}\\
&= K_{1} e^{-r_{\varepsilon} \rho t} \left(r_{\varepsilon}\rho \dst{\int_{1}^{t} F(s) e^{r_{\varepsilon} s} \, \dd s} -F(t)e^{r_{\varepsilon}\rho t} +F(1)e^{r_{\varepsilon}\rho}\right)\\
&\leq K_{1}E_{M}(0)e^{-r_{\varepsilon} \rho t}  \left(r_{\varepsilon}\rho  \dst{\int_{1}^{t}  e^{-(8\pi^{2}-r_{\varepsilon}) s} \, \dd s}+e^{-(8\pi^{2}-r_{\varepsilon}\rho)}\right).
\end{align*}
We distinguish between two cases:
\begin{itemize}
\item[(i)] If $8\pi^{2} = r_{\varepsilon} \rho$, one gets, for all $t\geq 1$:
\begin{align*}
I_1 &\leq K_{1}E_{M}(0)e^{-8\pi^2 t}  \left(8\pi^2  \left(t-1\right)+1\right)
\end{align*}
and, since $(t-1) \leq \frac{e^{-1-\delta}}{\delta}e^{\delta t}$ for all $\delta>0$, considering $\delta = \varepsilon$, one gets that, for all $t \geq 1$:
\begin{align*}
I_1 &\leq K_{1}E_{M}(0)e^{-8\pi^2 t}  \left(8\pi^2\frac{e^{-1-\varepsilon}}{\varepsilon}   e^{\varepsilon t}+1\right)\\
&\leq K_{1} E_{M}(0)\left(8\pi^2 \frac{ e^{-1-\varepsilon}}{\varepsilon}  \vee 1 \right) e^{-(8\pi^{2}-\varepsilon) t}.
\end{align*}
\item[(ii)] If $8\pi^{2} \neq r_{\varepsilon} \rho$, one gets, for all $t\geq 1$:
\begin{align*}
I_1 & \leq K_{1} E_{M}(0)\left(\frac{r_{\varepsilon}\rho}{|8\pi^{2}-r_{\varepsilon}\rho |}  \vee e^{-(8\pi^{2}-r_{\varepsilon}\rho)} \right) e^{-(8\pi^{2} \wedge r_{\varepsilon} \rho) t}.
\end{align*}
\end{itemize}
In any case one has, for all $t\geq 1$
$$I_{1} \leq \mathcal{K}_{1} e^{-\left((8\pi^2 - \varepsilon) \wedge r_{\varepsilon} \rho\right) t},$$
where $\dst{\mathcal{K}_{1}=\mathcal{K}_{1}(\varepsilon)=\left(1+\frac{M^{2}}{2\rho^{2} \varepsilon}\right)E_{M}(0) \left( 8\pi^2\frac{e^{-1-\varepsilon}}{\varepsilon} \vee \frac{r_{\varepsilon}\rho}{|8\pi^{2}-r_{\varepsilon}\rho|}\vee e^{-(8\pi^{2}-r_{\varepsilon}\rho)} \vee 1\right)>0}$.\\

\noindent $\bullet$ Now consider, for all $t \ge 1$, $I_2 := K_{2}\dst{\int_{1}^{t}e^{-4\pi^{2} s -r_{\varepsilon}\rho(t-s)} \, \dd s}$. We distinguish between two cases:
\begin{itemize}
\item[(i)] If $r_{\varepsilon} \rho \neq 4 \pi^2$ then, for all $t\geq 1$:
\begin{align*}
K_{2}\dst{\int_{1}^{t}e^{-4\pi^{2} s -r_{\varepsilon}\rho(t-s)} \, \dd s} &\leq \frac{K_2}{|4\pi^{2}-r_{\varepsilon} \rho |} e^{-\left(4\pi^{2} \wedge r_{\varepsilon}\rho \right) t}. 
\end{align*}
\item[(ii)] If $r_{\varepsilon} \rho = 4 \pi^2$ then, for all $t\geq 1$:
\begin{align*}
K_{2}\dst{\int_{1}^{t}e^{-4\pi^{2} s -r_{\varepsilon}\rho(t-s)} \, \dd s} &= K_{2}e^{-4\pi^{2}t} (t-1),
\end{align*}
and, since $(t-1) \leq \frac{e^{-1-\delta}}{\delta}e^{\delta t}$ for all $\delta>0$, considering $\delta = \varepsilon$, one gets that, for all $t \geq 1$:
\begin{align*}
K_{2}\dst{\int_{1}^{t}e^{-4\pi^{2} s -r_{\varepsilon}\rho(t-s)} \, \dd s} & \leq K_2  \frac{e^{-1-\varepsilon}}{\varepsilon} e^{-\left(4\pi^{2}-\varepsilon\right)t}.
\end{align*}
\end{itemize}
In any case one has, for all $t\geq 1$:
$$I_{2} \leq \mathcal{K}_2 e^{-\left((4\pi^{2}-\varepsilon )\wedge r_{\varepsilon}\rho\right)t},$$
where $\dst{\mathcal{K}_2= \mathcal{K}_{2}(\varepsilon)= \left(\frac{\tilde{C}^2}{4}+\frac{M^{2}\tilde{C}^2}{2\rho^{2} \varepsilon}\right) \left(\frac{1}{|4\pi^{2}-r_{\varepsilon} \rho |} \vee \frac{e^{-1-\varepsilon}}{\varepsilon} \right)>0.}$\\

\noindent Hence, recalling that $r_{\varepsilon}=2(1-\varepsilon)$ one gets that for all $\varepsilon>0$, for all $t \ge 1$,
\begin{align*}
  E_{m}(t) &  \leq  E_{m}(1)e^{r_{\varepsilon}\rho}\, e^{-r_{\varepsilon}\rho t} +\mathcal{K}_{1}e^{-\left((8\pi^2 - \varepsilon) \wedge r_{\varepsilon} \rho\right) t} + \mathcal{K}_{2}e^{-\left((4\pi^{2}-\varepsilon )\wedge r_{\varepsilon}\rho\right)t} \\
  &\leq \mathcal{K}_{3} e^{-\left((4\pi^{2} \wedge 2 \rho)-\varepsilon\right) t},
\end{align*} for some $\mathcal{K}_{3}=\mathcal{K}_{3}(\varepsilon)= \left(E_{m}(1)e^{2\rho-\varepsilon} \vee \tilde{\mathcal{K}}_{1} \vee \tilde{\mathcal{K}}_{2}\right)>0$, where
$$\left\{\begin{array}{l}
\tilde{\mathcal{K}}_{1}=\dst{\left(1+\frac{M^{2}}{\rho \varepsilon}\right)E_{M}(0) \left( \frac{16\pi^{2}\rho\,\,e^{-(1+\frac{\varepsilon}{2\rho})} }{\varepsilon} \vee \frac{(2\rho -\varepsilon) }{\left|8\pi^{2}-(2\rho-\varepsilon)\right|} \vee e^{-(8\pi^{2}-(2\rho-\varepsilon))} \vee 1\right)}\\
\tilde{\mathcal{K}}_{2}=\dst{\left(\frac{\tilde{C}^2}{4}+\frac{M^{2}\tilde{C}^2}{\rho \varepsilon}\right) \left(\frac{1}{|4\pi^{2}-(2\rho-\varepsilon) |} \vee \frac{2\rho e^{-(1+\frac{\varepsilon}{2\rho})}}{\varepsilon} \right)}
\end{array}
\right.\,.$$\\

\noindent\textbf{Step 5:} It remains to treat the case where $t \in [0,1]$. We have:
$$ \left(\dst{\int_{\mathbb{T}^{m}} |\nabla H_{t}- \nabla \tilde{H}_{t}|^{2} \, \pi^{\xi}_{t} } \right) ^{\frac{1}{2}} \leq \|\pi_{t}^{\xi}\|_{L^{2}(\IT^{m})}^{\frac{1}{2}} \|\nabla H_{t} -\nabla \tilde{H}_{t} \|_{L^{4}(\IT^{m})}, \quad \forall t \in [0,1].$$
From \eqref{eq:pixiL2}, there exists $C_{2}>0$ such that for all $t\in [0,1]$, $\|\pi_{t}^{\xi}\|_{L^{2}(\IT^{m})}^{\frac{1}{2}}\le C_{2}$, and, using ~\cite[Lemma 15.13]{Ambrosio}, there exists $C_{4}>0$ such that for all $t \in [0,1]$, 
$$\|\nabla H_{t}\|_{L^{4}(\IT^{m})}\leq C_{4} \|G_{t}\|_{L^{4}(\IT^{m})} \leq C_{4} \|\mathcal{F}\|_{\infty} \leq C_{4} \|\nabla V\|_{\infty} <\infty.$$
Similarly, one has $\|\nabla \tilde{H}_{t}\|_{L^{4}(\IT^{m})} \leq C_{4} \|\nabla V\|_{\infty}$. Hence inequality \eqref{DemThmConservativeBoundJ} becomes, for all $\varepsilon>0$ and for all $t\in [0,1]$:
\begin{align*}
J_t&\leq  2C_{2}C_{4} \|\nabla V\|_{\infty} \left(\sqrt{I_{M}}(t) + M \sqrt{\frac{2}{\rho}} \sqrt{E_{m}(t)} \right)\\
&\leq  2C_{2}C_{4} \|\nabla V\|_{\infty} \left(\sqrt{I_{M}}(t) +  \sqrt{\frac{2M^{2}}{\varepsilon \rho^{2} }} \sqrt{\varepsilon \rho E_{m}(t)} \right)\\
&\leq  \varepsilon \rho E_{m}(t) + I_{M}(t) +(C_{2}C_{4} \|\nabla V\|_{\infty})^{2}\left(1+\frac{2M^{2}}{\varepsilon\rho^{2}}\right).
\end{align*}
It yields, from inequality \eqref{DemThmConservativeBoundEm}, for all $\varepsilon>0$ and for all $t \in [0,1[$:
$$
\frac{\dd E_{m}}{\dd t } \leq  -r_{\varepsilon}\rho E_{m}(t)+K_{1}I_{M}(t)+K_{2},$$
with $$
K_{1}=K_{1}(\varepsilon)=1+\frac{M^{2}}{2\varepsilon \rho^{2}} ,\qquad 
K_{2}=K_{2}(\varepsilon)=(C_{2}C_{4} \|\nabla V \|_{\infty})^{2}\left(1+\frac{2M^{2}}{\varepsilon\rho^{2}}\right).
$$
The Gronwall's lemma yields, for all $\varepsilon>0$ and for all $t\in [0,1[$:
\begin{align*}
E_{m}(t) &\leq  E_{m}(0) e^{-r_{\varepsilon}\rho t} +K_{1}\dst{\int_{0}^{t} I_{M}(s) e^{-r_{\varepsilon}\rho(t-s)} \, \dd s } + K_{2} \dst{\int_{0}^{t} e^{-r_{\varepsilon}\rho(t-s)} \, \dd s}\\
&\leq E_{m}(0) +K_{1}e^{0}\dst{\int_{0}^{\infty} I_{M}(s)  \, \dd s } + \frac{K_{2}}{r_{\varepsilon}\rho}\left(1- e^{-r_{\varepsilon} \rho t}\right) \\
& \leq  E_m(0) + K_1 E_M(0) + \frac{K_2}{r_{\varepsilon} \rho},
\end{align*}
where we used \eqref{DemThmConservativeBoundFisher}. Hence, for all $\varepsilon>0$ and for all $t \in [0,1[$
$$E_m(t) e^{\left((4\pi^{2} \wedge 2\rho)-\varepsilon\right)} \leq \left( E_m(0) + K_1 E_M(0) + \frac{K_2}{r_{\varepsilon} \rho}\right) e^{\left((4\pi^{2} \wedge 2\rho)-\varepsilon\right)} <+\infty . $$

\noindent\textbf{Conclusion:} for the PABF algorithm, we have obtained that for all $\varepsilon>0$, there exists $C=C(\varepsilon)>0$ such that, for all $t\geq 0$, 
$$E_{m}(t) \leq \ C e^{-\left((4\pi^{2} \wedge 2\rho)-\varepsilon\right)t}.$$
Recall that by Proposition~\ref{PropHistoPlat}, $E_{M}(t) \leq E_{M}(0)e^{-8\pi^{2}t}$ for all $t \ge 0$. The decomposition $E(t) =E_{m}(t)+E_{M}(t)$ concludes the proof. 
\end{proof}

\subsection{Proof of Theorem \ref{ThmCVtempslong}}
\label{subsec:thm-non-conservative}

Let us prove Theorem~\ref{ThmCVtempslong}. 

\begin{proof}

Using Lemma~\ref{lem:dEtot} one gets:
\begin{align}
\frac{\dd E}{\dd t }&=- \displaystyle{\int_{\mathbb{T}^{n}} \left|\nabla \ln \po\frac{\pi_{t}}{ \pi_{\infty}}\pf\right|^{2} \, \pi_{t}}+\displaystyle{\int_{\mathbb{T}^{n}} \left(B_{t}- B\right) \cdot \nabla_{x} \ln \po\frac{\pi_{t}}{ \pi_{\infty}}\pf \,\pi_{t}}\nonumber \\
& \leq   -\displaystyle{\int_{\mathbb{T}^{n}} \left|\nabla \ln \po\frac{\pi_{t}}{ \pi_{\infty}}\pf\right|^{2} \, \pi_{t}} +\left(\displaystyle{\int_{\mathbb{T}^{m}} |B_{t}- B|^{2} \, \pi^{\xi}_{t}}\right)^{\frac{1}{2}}\left(  \displaystyle{\int_{\mathbb{T}^{n}} \left|\nabla_{x} \ln\po\frac{\pi_{t}}{\pi_{\infty}}\pf\right|^{2} \, \pi_{t}} \right)^{\frac{1}{2}}\,. \label{DemThmCvtempslongBoundE}
\end{align}
\noindent\textbf{Step 1:} Let us first consider $t \ge 1$. In the PABF case, since an orthogonal projection contracts the corresponding norm, for all $t \ge 1$:
\begin{align*}
\int_{\mathbb{T}^{m}} |\nabla H_{t}- \nabla H_{\infty}|^{2} \pi^{\xi}_{t} & \leq  \| \pi_t^{\xi}\|_\infty \int_{\mathbb{T}^{m}} |\nabla H_{t}- \nabla H_{\infty}|^{2}   \\
& \leq \| \pi_t^{\xi}\|_\infty \int_{\mathbb{T}^{m}} |G_{t}- G_{\infty}|^{2}     \\
& \leq \| \pi_t^{\xi}\|_\infty \|1/\pi_t^{\xi}\|_\infty \int_{\mathbb{T}^{m}} |G_{t}- G_{\infty}|^{2} \pi_t^{\xi}   \\
& \leq \po 1 + C e^{-4\pi^{2}t}\pf \int_{\mathbb{T}^{m}} |G_{t}- G_{\infty}|^{2} \pi_t^{\xi} ,
\end{align*}
for some $C>0$ according to Proposition~\ref{PropHistoUniform}. Together with Lemma~\ref{lem:BoundBias} and the microscopic log-Sobolev inequality \eqref{BoundEm}, we have thus obtained for all $t \ge 1$, in both the ABF case (where $B_t=G_t$ and $B_\infty= G_\infty$) and PABF case (where $B_{t}= \nabla H_t$ and $B_\infty = \nabla H_\infty$),
\begin{align*}
\left(\int_{\mathbb{T}^{m}} |B_{t}- B_{\infty}|^{2} \pi^{\xi}_{t}\right)^{\frac{1}{2}} &\leq  \sqrt{1 + C e^{-4\pi^{2}t}} M \sqrt{\frac{2}{\rho}} \sqrt{E_{m}(t)} \\
&\leq  \sqrt{1 + C e^{-4\pi^{2}t}} M \sqrt{\frac{2}{\rho}} \frac{1}{\sqrt{2\rho}} \left(\dst{\int_{\mathbb{T}^{n}} \left|\nabla_{y}\ln\po\frac{\pi_{t}}{\pi_{\infty}}\pf\right|^{2} \,\pi_{t}  }\right)^{\frac{1}{2}}.
\end{align*}

As a consequence,
\begin{align*}
\frac{\dd E}{\dd t } & \leq -\displaystyle{\int_{\mathbb{T}^{n}} \left|\nabla \ln \po \frac{\pi_{t}}{ \pi_{\infty}}\pf\right|^{2} \, \pi_{t}}+\frac{M}{\rho} \sqrt{ 1 + C e^{-4\pi^{2}t}} \left(\displaystyle{\int_{\mathbb{T}^{n}} \left|\nabla_{y}\ln \po\frac{\pi_{t}}{\pi_{\infty}}\pf\right|^{2} \,\pi_{t}  }\right)^{\frac{1}{2}}\left(\displaystyle{\int_{\mathbb{T}^{n}} \left|\nabla_{x} \ln\po\frac{\pi_{t}}{\pi_{\infty}}\pf\right|^{2} \, \pi_{t}} \right)^{\frac{1}{2}}\\
& \leq   \po - 1 + \frac{M}{2\rho} + C' e^{-2\pi^{2}t}  \pf \int_{\mathbb{T}^{n}} |\nabla \ln \po \frac{\pi_{t}}{ \pi_{\infty}}\pf|^{2} \, \pi_{t}\,.
\end{align*}
with $C'= M\sqrt{C}/(2\rho)$. Since we assumed $M<2\rho$, there exists $t_0 \ge 1$ such that for all $t \ge t_{0}$, the right hand side is negative:
$$- 1 + \frac{M}{2\rho} + C' e^{-2\pi^{2}t}:= -\alpha(t) \le 0, \quad \forall t \ge t_{0}.$$
And, given the logarithmic-Sobolec inequality of constant $R>0$ satisfied by $\pi_{\infty}$:
$$\frac{\dd E}{\dd t } \leq -2\alpha(t) R E(t) \quad \forall t \ge t_{0}.$$
Hence by Gronwall's lemma, for all $t \ge t_{0}$:
\[E(t) \ \leq \  E(t_{0}) \exp \po -2R  \int_{t_{0}}^{t}\alpha(s)  \dd s \pf  \ = \ E(t_{0}) \exp \po -2R\po 1 - \frac{M}{2\rho}\pf (t-t_0) + \frac{C'R}{2\pi^2}\pf\]

\noindent\textbf{Step 2:} As for times $t \in [0,t_{0}]$, as  in the third step of the proof of Theorem \ref{ThmConservative}, there exists $C_{2} >0$ and $C_{4} >0$ such that for all $t \in [0,t_{0}]$:
\begin{align*}
\left(\displaystyle{\int_{\mathbb{T}^{m}} |B_{t}- B_{\infty}|^{2} \, \pi^{\xi}_{t}}\right)^{\frac{1}{2}} &\leq \sqrt{\|\pi_{t}^{\xi}\|_{2}} \|B_{t}-B_{\infty} \|_{4} \ \leq \ 2C_{2}C_{4} \|\mathcal{F}\|_{\infty}.
\end{align*}
Inequality \eqref{DemThmCvtempslongBoundE} becomes, for all $t \in [0,t_{0}]$:
\begin{align*}
\frac{\dd E(t)}{\dd t} & \leq  -\dst{\int_{\mathbb{T}^{n}} \left|\nabla \ln \po\frac{\pi_{t}}{ \pi_{\infty}}\pf\right|^{2} \, \pi_{t}} +2C_{2}C_{4} \|\mathcal{F}\|_{\infty}\left(\dst{\int_{\mathbb{T}^{n}} \left|\nabla_{x} \ln\po\frac{\pi_{t}}{\pi_{\infty}}\pf\right|^{2} \, \pi_{t}} \right)^{\frac{1}{2}}\\
&\leq  C_{2}^2C_{4}^2 \|\mathcal{F}\|_{\infty}^{2}.
\end{align*}
Hence, for all $t \in [0,t_{0}]$
$$E(t) \le E(0) +\left(C_{2}C_{4} \|\mathcal{F}\|_{\infty}\right)^{2} t,$$
and
$$E(t) e^{2R\po 1-\frac{M}{2\rho} \pf t} \le \left( E(0) +\left(C_{2}C_{4} \|\mathcal{F}\|_{\infty}\right)^{2} t_{0}\right) e^{2R\po 1-\frac{M}{2\rho} \pf t_0 }$$
which concludes the proof, relying on the same argument as in the proof of Theorem~\ref{ThmConservative}.

\end{proof}
\subsection{Proof of Corollary \ref{CorCvBias}}
\label{subsec:corollary-cv}
\begin{proof}

Similarly to the previous proofs, using Lemma~\ref{lem:BoundBias} and Proposition~\ref{PropHistoUniform}, there exists $C>0$ such that, for all $t \ge 1$:
\begin{align*}
\int_{\mathbb{T}^{m}} |G_{t}-G_{\infty}|^2 \dd x & \leq  \|1/\pi_t^\xi\|_\infty \int_{\mathbb{T}^{m}} |G_{t}-G|^2 \pi_t^\xi \nonumber \\
& \leq \ (1+Ce^{-4\pi^{2}t}) \frac{2M}{\rho} E_m(t) \nonumber\\
& \leq  (1+Ce^{-4\pi^{2}t}) \frac{2M}{\rho} K e^{-\Lambda t}, 
\end{align*}
where we used either Theorem~\ref{ThmConservative} or \ref{ThmCVtempslong}. 
 For $t \in [0,1]$, we simply bound
$$\int_{\mathbb{T}^{m}} |G_{t}-G|^2 \dd x  \  \le 2 \ \|\mathcal{F}\|_{\infty}^{2} \,. $$
This concludes the ABF case, for which $B_t = G_t$ and $B_\infty = G_\infty$. Besides, the $L^2$-norm is decreased by the Helmholtz projection, which concludes the PABF case.
\end{proof}

\subsection*{Acknowledgements}

This work has received funding from the European Research Council (ERC) under the European Union’s Horizon 2020 research and innovation program (grant agreement No 810367), project EMC2. P. Monmarché acknowledges partial support by the projects EFI ANR-17-CE40-0030.

\bibliographystyle{abbrv}
\begin{small}
\bibliography{CV_PABF}
\end{small}

\end{document}